\documentclass{article}

\usepackage[noadjust]{cite}

\usepackage{amsmath,amssymb,amsfonts, amsthm}
\usepackage{mathtools}
\usepackage[margin=1.1in]{geometry}
\usepackage{booktabs}
\usepackage{multirow}
\usepackage{tabularx}

\usepackage{enumitem}
\setlist{topsep=1pt,itemsep=0pt,labelwidth=8pt,labelsep=5pt,leftmargin=16pt}
\newcommand{\AlgFullname}{Zeroth-Order Incremental Variance Reduction}
\newcommand{\AlgAbbrv}{ZIVR}

\usepackage{graphicx}
\usepackage{textcomp}
\usepackage{xcolor}

\usepackage[ruled, norelsize, algo2e]{algorithm2e}
\let\oldnl\nl
\newcommand{\nonl}{\renewcommand{\nl}{\let\nl\oldnl}}
\usepackage{setspace}

\usepackage{comment}

\DeclareMathOperator*{\argmin}{arg\,min}

\newcommand{\defeq}{\coloneqq}

\DeclarePairedDelimiterX\Econd[2]{[}{]}{#1 \mkern2mu\delimsize\vert\mkern2mu #2}

\newtheorem{lemma}{Lemma}
\newtheorem{theorem}{Theorem}
\newtheorem{corollary}{Corollary}

\theoremstyle{definition}
\newtheorem{assumption}{Assumption}

\theoremstyle{remark}
\newtheorem{remark}{Remark}

\newlength\myindent
\allowdisplaybreaks

\title{\LARGE \bf
Distributed Zeroth-Order Composite Finite-Sum Optimization via Incremental Variance Reduction
}

\author{Silan Zhang and Yujie Tang
\thanks{This work was supported by the National Natural Science Foundation of China through grant 72431001.
S. Zhang and Y. Tang are with the Department of Control Science and Systems Engineering, Peking University, Beijing, China. 
        Email: {\tt\small zhangsilan@stu.pku.edu.cn}, {\tt\small yujietang@pku.edu.cn}}%
}

\date{}

\begin{document}

\maketitle

\begin{abstract}
We study decentralized composite finite-sum optimization over a fixed connected undirected network,
where agents access local component functions through function evaluations and share a possibly nonsmooth convex regularizer.
In this setting, standard two-point gradient estimators can retain nonvanishing variance at a solution, complicating their integration with gradient tracking.
We propose Distributed Zeroth-Order Incremental Variance Reduction (Distributed ZIVR), which combines incrementally updated component-gradient estimates with a proximal gradient-tracking scheme.
The method admits an implementation using a constant number of function evaluations per agent at every iteration, without intermittent full-component or full-coordinate refreshes.
For convex smooth component functions and strongly convex local averages, we establish linear convergence up to a controllable finite-difference error.
With the finite-difference radius chosen for the target accuracy, Chebyshev-accelerated mixing yields a per-agent oracle complexity of $\widetilde{\mathcal{O}}(d(m+\kappa)\log(1/\epsilon))$ for expected squared solution error $\epsilon$,
where $d$ is the dimension, $m$ is the number of components per agent, and $\kappa$ is the condition number.
This matches the centralized ZIVR oracle bound, while the communication bound has only an inverse square-root dependence on the network spectral gap.
Experiments on logistic and robust regression demonstrate rapid reduction of solution and consensus errors.
\end{abstract}

\section{Introduction}
\label{section:introduction}

We consider a network of $n$ agents that cooperatively solves the composite optimization problem
\begin{equation}\label{eq:intro_formulation}
    \min_{x\in\mathbb{R}^d}
    \ h(x)\coloneqq \frac{1}{n}\sum_{i=1}^{n} \frac{1}{m_{i}} \sum_{j=1}^{m_{i}} f_{i, j}(x)+\psi(x),
\end{equation}
where $x\in\mathbb{R}^d$ is the common decision variable. Each $f_{i, j}$ is a smooth local objective known only to agent $i$, and $\psi$ is a shared, possibly nonsmooth, convex regularizer.
Agent $i$ may only query the zeroth-order information of the $m_{i}$ functions $f_{i, j}, j \in \{1, 2, \cdots, m_{i}\}$ local to itself, and exchange information with their neighbors.

Distributed optimization finds applications in large-scale machine learning, electric power systems, and distributed sensor fusion~\cite{nedic2020machine, molzahn2017survey,xiao2005sensorfusion}.
In the smooth case $\psi\equiv0$, decentralized gradient descent (DGD) provides a direct way to combine optimization and consensus~\cite{nedic2009distributed}.
However, it is known that with a fixed stepsize, DGD generally converges only to a neighborhood of the solution,
whereas exact convergence requires a diminishing stepsize~\cite{yuan2016convergence}, resulting in suboptimal convergence rate.

The cause of this problem is local heterogeneity. At a global minimizer $x^\star$, the average gradient vanishes, but the individual gradients need not.
As a result, with fixed stepsize local copies of the decision variable will not converge to $x^{\star}$.
Gradient-tracking methods correct this mismatch by assigning each agent a tracking gradient that recursively estimates the average gradient.
The tracking recursion combines neighbor communication with local gradient changes to follow the evolving network-average gradient.
This mechanism permits exact convergence with a fixed stepsize and yields linear convergence for smooth strongly convex objectives under standard network conditions~\cite{qu2018harnessing,nedic2017achieving, pu2021pushpull}.
EXTRA and exact diffusion also achieve exact convergence with a fixed
stepsize through correction steps~\cite{shi2015extra,yuan2019exactdiffusion}.

The aforementioned optimization algorithms rely on first-order information. However, in some practical scenarios such as parameter tuning for machine learning \cite{snoek_practical_2012},
black-box adversarial attacks on deep neural networks \cite{chen_zoo_2017} and model-free control \cite{ren_lqr_2021},
explicit gradient information of the objective function can be very difficult or even impossible to compute, and only function evaluations are accessible.
Various zeroth-order (ZO) optimization methods have been proposed to address this challenge.
In this paper, we particularly focus on algorithms based on zeroth-order gradient estimators, initiated by \cite{nemirovskij1983problem}.
In these algorithms, a ZO gradient estimator is constructed based on function values acquired at a few randomly explored points, then combined with gradient-based optimization frameworks.
In the ideal case, these algorithms can match the complexity of their first-order counterparts up to an \(\mathcal O(d)\) factor.
Several zeroth-order algorithms for distributed optimization also achieve oracle-complexity bounds with the same dimension-dependent overhead \cite{tang2021distributed}.

However, as \cite{tang2021distributed} shows, 2-point gradient estimator and GT do not reconcile with each other well.
The authors attribute this to the non-vanishing variance of the 2-point gradient estimator.
While first-order GT methods successfully address the oscillation of local copy $x_{i}$ around the global optimal point $x^{\star}$ due to heterogeneity of the component functions,
the variance of the 2-point gradient estimator brings back this oscillating behavior.
Therefore convergence result is only established under the DGD framework, with an oracle complexity of $\mathcal O(d/\epsilon)$ in the smooth and gradient dominated case, where $\epsilon$ is the optimality gap.
On the other hand, the $2d$-point full-coordinate gradient estimator is variance-free and combines well with GT methods, leading to linear convergence.
However, it requires evaluating $2d$ points at each iteration, which can be impractical for problems of large dimensions.
Recently \cite{mu2024variance} proposes to address this trade-off by combining variance-reduced zeroth-order estimator with gradient-tracking methods,
achieving a complexity of $\mathcal O(d\kappa/(1-\rho^2)^2 \log(1/\epsilon))$, improving GT–2d’s oracle bound by a factor of $\kappa^{1/3}$
while requiring only a constant number of oracle calls per iteration on average. However, this algorithm still requires occasional full-coordinate refreshes.

\paragraph{Challenges of Our Settings}
Our goal here is to successfully develop zeroth-order algorithms whose convergence rate matches first-order variance-reduction algorithms for problem \eqref{eq:intro_formulation} while avoiding the need of batch sampling.
As is pointed above, the main challenge lies in the non-vanishing variance of the 2-point gradient estimator, and this problem is especially pronounced for \eqref{eq:intro_formulation}.
\begin{itemize}
    \item The heterogeneity of local objectives causes a mismatch between local and global optimal points,
    which means global optimal point is not necessary a stationary point for local objective, resulting in the variance of the local 2-point gradient estimator.
    \item With a nonsmooth term $\psi$, optimality requires
    $0\in\nabla f(x^\star)+\partial\psi(x^\star)$, but
    $\nabla f(x^\star)$ need not vanish. Thus, directional-estimation
    variance can persist even in the centralized setting.
    \item Finally, in \eqref{eq:intro_formulation}, each local objective function has a finite-sum structure, so random sampling of component functions adds an extra layer of variance.
\end{itemize}

\subsection{Main Contributions}

This work develops a distributed extension of the Zeroth-Order Incremental Variance Reduction (\AlgAbbrv{}) method for the composite finite-sum problem~\eqref{eq:intro_formulation}.
The proposed method combines local incremental zeroth-order variance reduction with a proximal gradient-tracking scheme,
allowing the agents to cooperate using only local function evaluations and neighbor-to-neighbor communication. Specifically,
\begin{enumerate}
    \item Each agent maintains an incrementally updated table of estimates of its local component gradients.
    Fresh two-point directional samples are used both to refresh selected entries of this table and to correct its stored average when constructing the local gradient estimator.
    We embed these local estimators into a primal-dual proximal gradient-tracking recursion, inspired by \cite{yuan2019exactdiffusion},
    which simultaneously handles local heterogeneity, network consensus, and the possibly nonsmooth regularizer $\psi$.
    The resulting framework admits a pure 2-point zeroth-order algorithm that does not require intermittent full-batch samplings.

    \item We establish a convergence theory for the proposed distributed \AlgAbbrv{} method over connected undirected networks when the component functions are convex and smooth and each local average is strongly convex.
    Our analysis uses an operator-splitting representation and establishes a concrete oracle complexity result that matches first-order algorithms up to an $\mathcal O(d)$ factor.
    With Chebyshev-accelerated mixing, the pure constant-batch implementation matches the centralized zeroth-order oracle complexity,
    while its communication complexity exhibits only a square-root dependence on the network condition number.
\end{enumerate}

\subsection{Related Works}

\paragraph{Distributed optimization and gradient tracking.}
Consensus subgradient methods and DGD established the basic mix-and-update template for distributed optimization.
Although these methods are simple, constant-stepsize DGD generally converges only to a neighborhood of the solution when the local gradients are heterogeneous \cite{nedic2009distributed,yuan2016convergence}.
EXTRA and exact diffusion remove this bias through correction steps \cite{shi2015extra,yuan2019exactdiffusion}.
Gradient tracking (GT) takes a different route: each agent dynamically estimates the network-average gradient while updating its local decision.
This idea yields exact convergence over time-varying graphs and linear rates under strong convexity \cite{nedic2017achieving,qu2018harnessing}.
It has also been extended to directed networks through Push--Pull \cite{pu2021pushpull} and to composite objectives through proximal primal--dual formulations \cite{xu2021distributed}.

\paragraph{Variance reduction and variance reduction with gradient tracking.}
Variance reduced algorithms were first developed for centralized finite-sum problems. Notably, SVRG uses periodic snapshot gradients as control variates,
while SAGA maintains a component-gradient table \cite{johnson2013accelerating,defazio2014saga}. Prox--SVRG extends SVRG to composite objectives \cite{xiao2014proximal}.
Recursive gradient estimators such as SPIDER and SARAH also yield improved convergence guarantees for nonconvex problems \cite{fang2018spider,nguyen2017nonconvex}.
In decentralized finite-sum optimization, however, an algorithm must control both component-sampling noise and disagreement among agents.
DSA addresses the two effects by inserting a SAGA-type table into EXTRA, whereas GT--VR directly combines local SAGA or SVRG estimators with a gradient tracker \cite{mokhtari2016dsa,xin2020gtvr}.
D--GET and DESTRESS employ SARAH/SPIDER-type gradient estimator with gradient tracking for decentralized nonconvex optimization \cite{sun2020dget,li2022destress}.
while PMGT--SAGA and PMGT--LSVRG combine variance reduction with proximal updates and accelerated multi-consensus.
These two proximal methods attain graph-independent component-gradient complexity but retain network dependence in communication \cite{ye2026pmgt}.

\paragraph{Zeroth-order optimization and variance reduction.}
Randomized gradient estimation is a standard approach to ZO optimization. Complexity guarantees have been established for convex problems,
including accelerated methods for smooth objectives \cite{nesterov_random_2017}, and for finding stationary points of smooth stochastic nonconvex problems \cite{ghadimi2013stochastic}.
For stochastic convex optimization, two-point methods attain rates that match minimax lower bounds up to constants or logarithmic factors \cite{duchi2015optimal},
with the logarithmic gap removed in the nonsmooth Euclidean setting \cite{shamir2017optimal}.
For finite-sum problems, ZO variance reduction must control randomness from both component sampling and directional gradient estimation.
Existing ZO--SVRG and ZO--SPIDER variants address both sources of variance, but require $\mathcal O(md)$ queries during full-batch refreshes \cite{fang2018spider,ji2019improved}.
Proximal extensions of SVRG and SAGA handle nonsmooth nonconvex finite sums,
but either require $\mathcal O(d)$ queries per gradient estimate or rely on bounded-gradient assumptions with limited accuracy guarantees \cite{huang2019faster}.
For strongly convex composite problems, ZPDVR achieves $\mathcal O(d(m+\kappa)\log(1/\epsilon))$ query complexity with a constant expected batch size, although its maximum batch size is $\mathcal O(m)$ \cite{di2024double}.
ZIVR incrementally updates stored component-gradient estimates and admits a two-point implementation with constant maximum batch size,
avoiding intermittent full-component or full-coordinate refreshes \cite{zhang2026zivr}.

\paragraph{Distributed zeroth-order optimization and variance reduction.}
Early distributed ZO methods inserted randomized function-value estimators into consensus or primal--dual schemes \cite{yuan2015randomized,Hajinezhad2017zone}.
The work \cite{tang2021distributed} combines 2-point ZO gradient estimator with DGD and $2d$-point estimator with gradient tracking, with the later achieving linear convergence under PL condition.
Distributed mirror-descent and stochastic primal--dual variants broaden the admissible constraints and noise models, but do not exploit an inner finite sum at each agent \cite{yu2022distributed,yi2022zeroth}.
Their general guarantees are sublinear; Yi et al.\ also establish linear convergence under PL condition and relative bounded second-moment assumptions \cite{yi2022zeroth}.
More recent methods reduce the estimator variance before applying GT. DZOVR uses momentum-based variance reduction for stochastic nonconvex objectives,
while VR--GE achieves an $\mathcal O(1)$ average batch through directional memory but still performs occasional $\mathcal O(d)$ refreshes \cite{chen2025dzovr,mu2024variance}.
Closest to our setting, GT--SAGA--2pVRGE combines component memory, two-point estimation,
and GT to obtain linear convergence with a constant realized batch for smooth local finite sums with a strongly convex global objective \cite{mu2026distributed}.
However, it does not handle a shared nonsmooth regularizer or common constraint, and its complexity result is also worse.

Our method allows a shared proximable $\psi$---and hence a common convex constraint through an indicator function---without intermittent full-component or full-coordinate batches.
The Chebyshev-accelerated specialization in Corollary~\ref{cor:accelerated_complexity} uses $\widetilde{\mathcal{O}}(d(m+\kappa)\log(1/\epsilon))$ oracle calls and $\widetilde{\mathcal{O}}((d(m+\kappa)+\sqrt{d(m+\kappa)/(1-\rho^2)}) \log(1/\epsilon))$ communication rounds,
thereby matching the centralized oracle complexity. Table~\ref{tab:related_work} summarizes the closest strongly convex comparisons.

\begin{table}[!t]
\centering
\caption{Strongly convex computation, communication, and sampling costs.}
\label{tab:related_work}
\scriptsize
\setlength{\tabcolsep}{1.5pt}
\renewcommand{\arraystretch}{1.20}
\begin{tabularx}{\textwidth}{@{}
>{\raggedright\arraybackslash}p{0.14\textwidth}
>{\raggedright\arraybackslash}p{0.115\textwidth}
>{\centering\arraybackslash}X
>{\centering\arraybackslash}X
>{\centering\arraybackslash}p{0.055\textwidth}
>{\centering\arraybackslash}p{0.055\textwidth}
>{\centering\arraybackslash}p{0.07\textwidth}@{}}
\toprule
Setting & Method &
\shortstack{Computation\\(oracle calls)} &
\shortstack{Communication\\rounds} &
\shortstack{Avg.\\batch} & \shortstack{Max.\\batch} & Constr. \\
\midrule
\multirow[t]{2}{0.14\textwidth}{Distributed FO, local finite-sum}
& PMGT--SAGA \cite{ye2026pmgt}
& $\mathcal O((m+\kappa)\log(1/\epsilon))$
& \shortstack{$\mathcal O((m\log m+\kappa\log\kappa)/$\\
  $\sqrt{1-\rho}\,\log(1/\epsilon))$}
& $\mathcal O(1)$ & $\mathcal O(1)$ & Yes \\
\cmidrule(l){2-7}
& PMGT--LSVRG \cite{ye2026pmgt}
& $\mathcal O((m+\kappa)\log(1/\epsilon))$
& \shortstack{$\mathcal O((m\log m+\kappa\log\kappa)/$\\
  $\sqrt{1-\rho}\,\log(1/\epsilon))$}
& $\mathcal O(1)$ & $\mathcal O(m)$ & Yes \\
\midrule
\multirow[t]{3}{0.14\textwidth}{Distributed ZO, deterministic local objectives}
& DGD--2p \cite{tang2021distributed}
& $\mathcal O(d \kappa^{2}/\epsilon)^\dagger$
& $\mathcal O(d\kappa^{2}/\epsilon)^\dagger$
& $\mathcal O(1)$ & $\mathcal O(1)$ & No \\
\cmidrule(l){2-7}
& GT--2d \cite{tang2021distributed}
& \shortstack{$\mathcal O(d\kappa^{4/3}/(1-\rho^2)^2$\\
  $\log(1/\epsilon))$}
& \shortstack{$\mathcal O(\kappa^{4/3}/(1-\rho^2)^2$\\
  $\log(1/\epsilon))$}
& $\mathcal O(d)$ & $\mathcal O(d)$ & No \\
\cmidrule(l){2-7}
& VR--GE \cite{mu2024variance}
& \shortstack{$\mathcal O(d\kappa/(1-\rho^2)^2$\\
  $\log(1/\epsilon))$}
& \shortstack{$\mathcal O(d\kappa/(1-\rho^2)^2$\\
  $\log(1/\epsilon))$}
& $\mathcal O(1)$ & $\mathcal O(d)$ & No \\
\midrule
\multirow[t]{2}{0.14\textwidth}{Centralized ZO finite-sum}
& ZPDVR \cite{di2024double}
& $\mathcal O(d(m+\kappa)\log(1/\epsilon))$
& --
& $\mathcal O(1)$ & $\mathcal O(m)$ & Yes \\
\cmidrule(l){2-7}
& ZIVR \cite{zhang2026zivr}
& $\widetilde{\mathcal{O}}(d(m+\kappa)\log(1/\epsilon))$
& --
& $\mathcal O(1)$ & $\mathcal O(1)$ & Yes \\
\midrule
\multirow[t]{3}{0.14\textwidth}{Distributed ZO, local finite-sum}
& GT--SAGA--2pVRGE \cite{mu2026distributed}
& \shortstack{$\mathcal O(md^2\kappa^2/(1-\rho^2)^2$\\
  $\log(1/\epsilon))$}
& \shortstack{$\mathcal O(md^2\kappa^2/(1-\rho^2)^2$\\
  $\log(1/\epsilon))$}
& $\mathcal O(1)$ & $\mathcal O(1)$ & No \\
\cmidrule(l){2-7}
& \textbf{Distributed \AlgAbbrv{} (ours)}
& \shortstack{$\widetilde{\mathcal{O}}([d(m+\kappa)+$\\
  $(1-\rho^2)^{-1}]\log(1/\epsilon))$}
& \shortstack{$\widetilde{\mathcal{O}}([d(m+\kappa)+$\\
  $(1-\rho^2)^{-1}]\log(1/\epsilon))$}
& $\mathcal O(1)$ & $\mathcal O(1)$ & Yes \\
\cmidrule(l){2-7}
& \textbf{Distributed \AlgAbbrv{}--Cheb (ours)}
& $\widetilde{\mathcal{O}}(d(m+\kappa)\log(1/\epsilon))$
& \shortstack{$\widetilde{\mathcal{O}}([d(m+\kappa)+$\\
  $\sqrt{d(m+\kappa)/(1-\rho^2)}]\log(1/\epsilon))$}
& $\mathcal O(1)$ & $\mathcal O(1)$ & Yes \\
\bottomrule
\end{tabularx}
\vspace{2pt}
\begin{minipage}{\textwidth}
\raggedright\scriptsize
FO and ZO denote first- and zeroth-order oracles.  Here
$\kappa=L/\mu$, $\rho=\|W-n^{-1}\mathbf1\mathbf1^\top\|_2<1$, and
$m_i=m$ in balanced distributed rows.
Distributed oracle counts are per agent, centralized counts are total, and
communication counts synchronous neighbor-exchange rounds rather than
transmitted scalars.  Initial-error factors inside logarithms are suppressed.
For DGD--2p, GT--2d, and VR--GE, one oracle call evaluates $f_i$;
in the finite-sum setting, this costs $m_i$ component-function evaluations.
The dagger marks the source-reported asymptotic DGD--2p rate, which suppresses
graph-dependent stepsize offsets and constants.  Strong convexity implies the
gradient-dominance
condition used by DGD--2p, GT--2d, and VR--GE.  A batch counts fresh component
gradients (FO) or component--direction pairs (ZO), excluding one-time
initialization;   The ZIVR rows use
their pure $R=1$ implementations.  Our unaccelerated and accelerated rates are
from Corollaries~\ref{cor:unaccelerated_complexity} and~\ref{cor:accelerated_complexity}, respectively.
\end{minipage}
\end{table}

\subsection{Notations}
We use $\mathcal O(\cdot)$ for order bounds and $\widetilde{\mathcal{O}}(\cdot)$ when suppressing polylogarithmic factors in problem parameters independent of $\epsilon$; the dependence on $\epsilon$ remains explicit.
For a positive integer $p$, let $[p]\coloneqq\{1,\ldots,p\}$. For a set $\mathcal S$, $|\mathcal S|$ denotes its cardinality.
For $x,y\in\mathbb R^d$, the standard inner product and Euclidean norm are $\langle x,y\rangle=x^\top y$ and $\|x\|=\sqrt{x^\top x}$, respectively.
We write $\mathbb S_d\coloneqq\{x\in\mathbb R^d:\|x\|=1\}$ for the unit sphere, $e_q\in\mathbb R^d$ for the $q$-th standard basis vector ($q\in[d]$), and $\mathbf1$ for the all-ones vector of the appropriate dimension.
For $V,H\in\mathbb R^{r\times s}$, the Frobenius inner product and norm are $\langle V,H\rangle_F=\operatorname{tr}(V^\top H)$ and $\|V\|_F=\sqrt{\langle V,V\rangle_F}$, respectively.
For $i\in[r]$ and $q\in[s]$, $[V]_{i,:}$ and $[V]_{:,q}$ denote the $i$-th row and $q$-th column; $[V]_{i,:}^\top$ expresses the row as a column vector.
For $\mathcal I\subseteq[r]$ and $\mathcal Q\subseteq[s]$, $[V]_{\mathcal I,\mathcal Q}$ denotes the corresponding submatrix.
We write $I_d$ for the $d\times d$ identity matrix. For a symmetric matrix $H\in\mathbb R^{p\times p}$, we order its eigenvalues as $\lambda_1(H)\leq\cdots\leq\lambda_p(H)$.

\section{Problem Formulation}
In this paper, we consider $n\ge2$ agents that cooperatively solve
\begin{equation}\label{eq:formulation}
  \min_{x\in \mathbb{R}^d}\ h(x)\stackrel{\mathrm{def}}{=} f(x)+\psi(x),
\end{equation}
where
\begin{equation*}
  f(x)=\frac{1}{n}\sum_{i=1}^{n} f_i(x),
  \qquad
  f_{i}(x) \coloneqq \frac{1}{m_{i}} \sum_{j=1}^{m_{i}} f_{i,j}(x).
\end{equation*}
Here $d$ and each $m_i$ are positive integers, and $[r]\coloneqq\{1,\ldots,r\}$ for a positive integer $r$.

The agents exchange messages synchronously over a fixed communication graph $\mathcal G=(\mathcal V,\mathcal E)$,
where $\mathcal V=[n]$ is the node set and $\mathcal E$ is the edge set. For each agent $i\in[n]$, define its neighborhood by
\[
    \mathcal N_i\coloneqq
    \{\ell\in[n]:\{i,\ell\}\in\mathcal E\}.
\]

\begin{assumption}[Communication network]\label{assump:communication_network}
The graph $\mathcal G$ is connected and undirected.
Its fixed mixing matrix $W=[w_{i\ell}]\in\mathbb R^{n\times n}$ is symmetric, nonnegative, and row-stochastic, and its smallest eigenvalue is strictly greater than $-1$:
\[
    W=W^\top,\qquad W\mathbf 1_n=\mathbf 1_n,\qquad
    w_{i\ell}\geq 0,\qquad \lambda_{\min}(W)>-1.
\]
For every pair of distinct nodes $i,\ell\in[n]$, the weights respect the graph:
\[
    w_{i\ell}>0\quad\Longleftrightarrow\quad \{i,\ell\}\in\mathcal E.
\]
The condition $\lambda_{\min}(W)>-1$ rules out a periodic mixing matrix; it is, for example, guaranteed when every node has a positive self-weight $w_{ii}>0$.
\end{assumption}

 We also impose the following assumptions on the information that can be accessed by agent $i$ with one oracle call:
\begin{itemize}
\item Given any $\alpha>0$, agent $i$ is able to efficiently evaluate the following proximal operator:
\[
\operatorname{prox}_{\alpha\psi}(x) \defeq
\argmin_{y\in\mathbb{R}^d}\left\{
\frac{1}{2}\|x-y\|_{2}^2 + \alpha\,\psi(y)
\right\}.
\]

\item Agent $i$ may only choose one component function $f_{i,j}$ and obtain the value of $f_{i,j}$ by querying a black-box, or a zeroth-order oracle,
which returns the value $f_{i,j}(x)$ whenever a point $x\in\mathbb{R}^d$ is fed into the black-box as its input. Agent $i$ does not have access to the derivatives of any order for any $f_{i,j}$.
\end{itemize}
Some discussions of these two assumptions are as follows:
\begin{itemize}
\item The first assumption is standard in composite optimization and has been imposed in relevant studies such as~\cite{huang2019faster,pougkakiotis2023zeroth}.
It allows us to incorporate simple non-smoothness into our problem settings and algorithm design. Two representative examples are given below.
  \begin{enumerate}[topsep=1.5ex]
    \item If we take $\psi$ to be the indicator function of a nonempty closed convex set $\mathcal{X}\subseteq\mathbb{R}^d$,
    then \eqref{eq:formulation} is equivalent to the constrained problem $ \min_{x\in\mathcal{X}} f(x) $,
and the proximal operator reduces to projection onto $\mathcal{X}$. 
\item Another practical example is the lasso regression problem~\cite{tibshirani1996lasso}, in which each $f_{i}(x)$ is the loss function corresponding to individual data points,
while $\psi(x) = \lambda \lVert x\rVert_{1}$ for some $\lambda > 0$ is a non-smooth regularization term that enforces sparsity on the solution $x$.
\end{enumerate}

\item The second assumption, as we have discussed in Section~\ref{section:introduction},
is relevant to many problems in areas such as signal processing and machine learning, and necessitates the development of zeroth-order algorithms.
To address the issue of lacking gradient information, different zeroth-order gradient estimators have been proposed. Below we highlight two existing common choices:
  \begin{enumerate}[topsep=1.5ex]
        \item For each $f_{i,j}$, we use the forward 2-point estimator with random perturbation direction $u \in \mathbb{R}^d$ and smoothing radius $\beta > 0$:
        \begin{equation}\label{eq:fm_2p}
            \widehat \nabla_u f_{i,j}(x; \beta) \defeq \frac{f_{i,j}(x+ \beta u) - f_{i,j}(x)}{\beta}u.
        \end{equation}
        This forward-difference convention is used throughout the paper.
        When $\beta$ is sufficiently small, $\widehat \nabla_{u} f_{i,j}(x ; \beta) \approx u u^{T} \nabla f_{i,j}(x)$, i.e., the estimator approximates the directional derivative multiplied by $u$.
        Under the $L$-smoothness condition imposed below, when $u$ is sampled uniformly from $\mathbb{S}_d$, $d\widehat \nabla_u f_{i,j}(x; \beta)$ satisfies
        \begin{gather*}
        \left\|\mathbb{E}_u\!\left[d\widehat\nabla_u f_{i,j}(x; \beta)\right]
        -\nabla f_{i,j}(x)\right\|^2
        \le \frac{dL^2\beta^2}{4},\\
        \operatorname{Var}_u\!\left(d\widehat\nabla_u f_{i,j}(x; \beta)\right)
        \le 2(d-1)\|\nabla f_{i,j}(x)\|_2^2
        +\frac{d^2L^2\beta^2}{2}.
        \end{gather*}
        These bounds follow from Lemma~\ref{lem:forward_difference_error} and the standard moment identities for uniform sphere sampling;
        see also \cite[Lemma~1]{duchi2015optimal} for related bias and second-moment bounds.
        Moreover, as $\beta\to0$, its variance converges to $(d-1)\|\nabla f_{i,j}(x)\|^2$, which need not vanish at a solution of the finite-sum composite problem.
        
    \item Another method to construct zeroth-order estimators is to sample along every coordinate direction, resulting in the following $(d+1)$-point estimator:
        \begin{equation}\label{eq:fm_dp}
            \widehat \nabla f_{i,j}(x; \beta) \defeq \sum_{q=1}^{d} \frac{f_{i,j}(x+ \beta e_q) - f_{i,j}(x)}{\beta}e_q,
        \end{equation}
        where $e_q$ is the $q$-th standard basis vector.
        This deterministic estimator has no variance and produces a fairly accurate estimation of $\nabla f_{i,j}(x)$ when $f_{i,j}$ is smooth and $\beta$ is small enough.
        However, its construction requires $d+1$ function queries, which can be problematic if $d$ is large.
    \end{enumerate}
\end{itemize}

For the convergence analysis, we impose the following assumptions on $f_{i,j}:\mathbb R^d\to\mathbb R$, their local averages $f_i$, and the possibly nonsmooth term $\psi:\mathbb R^d\to\mathbb R\cup\{+\infty\}$.

\begin{assumption}\label{assump:basic_problem}
    \begin{enumerate}
        \item\label{assump:smoothness} Each component function $f_{i,j}$ is convex and $L$-smooth on $\mathbb{R}^d$ for some $L>0$, with
\[
  \|\nabla f_{i,j}(x)-\nabla f_{i,j}(y)\|\leq L\|x-y\|,\qquad\forall x,y\in\mathbb{R}^d.
\]
Consequently, each local average $f_i$ is also $L$-smooth.

\item\label{assump:strong_convex_local} Each local average $f_i$ is $\mu$-strongly convex on $\mathbb{R}^d$ for a common $\mu>0$, i.e., for all $x, y \in \mathbb{R}^d$,
\begin{equation*}
f_i(x) \geq f_i(y) + \langle \nabla f_i(y), x-y \rangle + \frac{\mu}{2}\|x-y\|^2.
\end{equation*}

\item \label{assump:convex_psi} The nonsmooth term $\psi$ is proper, lower semicontinuous, and convex.
Here proper means that its effective domain $\operatorname{dom}\psi\coloneqq\{x\in\mathbb R^d:\psi(x)<+\infty\}$ is nonempty.

\end{enumerate}
\end{assumption}

Under Assumption~\ref{assump:basic_problem}, $h=f+\psi$ is proper, lower semicontinuous, and $\mu$-strongly convex.
In finite dimensions, these properties imply coercivity and existence of a unique minimizer $x^*$.
The assumptions on $\psi$ also ensure that $\operatorname{prox}_{\alpha\psi}$ is single-valued on $\mathbb R^d$ and nonexpansive for every $\alpha>0$.

\section{Our Algorithm}
\label{section:algorithm_design}

The discussion in the previous section highlights a tradeoff in zeroth-order optimization: a few directional queries give an inexpensive gradient estimate whose variance may not vanish near a solution,
whereas estimating every component gradient along all coordinates requires many function evaluations.
To address this tradeoff, we extend the centralized \AlgAbbrv{} construction~\cite{zhang2026zivr} to the distributed setting.
Each agent incrementally collects component-gradient information and uses this memory to reduce the variance of fresh directional samples.
We then combine the local estimators with a gradient-tracking mechanism based on the primal-dual proximal framework of~\cite{xu2021distributed},
so that the agents can cooperatively solve the global problem. The resulting method is summarized in Algorithm~\ref{alg:zivr_distributed}.

\subsection{Construction of the Zeroth-Order Estimator}

We first describe the estimator at a fixed agent $i$. Let $x_i^k\in\mathbb R^d$ be its local solution estimate at iteration $k$, and let $\beta^k>0$ be the finite-difference radius.
Our aim is to approximate $\nabla f_i(x_i^k)=m_i^{-1}\sum_{j=1}^{m_i}\nabla f_{i,j}(x_i^k)$ using a small number of new function evaluations at each iteration.

\paragraph{Component-gradient memory and Jacobian table.}
For each component $j\in[m_i]$, agent $i$ maintains a vector $s_{i,j}^k\in\mathbb R^d$ that estimates $\nabla f_{i,j}(x_i^k)$ using previously queried directional information.
We arrange these vectors as the rows of an estimated Jacobian table,
\[
    \tilde J_i^k
    \coloneqq
    \begin{bmatrix}
        (s_{i,1}^k)^\top\\
        \vdots\\
        (s_{i,m_i}^k)^\top
    \end{bmatrix}
    \in\mathbb R^{m_i\times d}.
\]
To specify what this table should track, for $X=[x_1,\ldots,x_n]^\top\in\mathbb R^{n\times d}$ define the exact table
\[
    J_i(X)
    \coloneqq
    \begin{bmatrix}
        \nabla f_{i,1}(x_i)^\top\\
        \vdots\\
        \nabla f_{i,m_i}(x_i)^\top
    \end{bmatrix}
    \in\mathbb R^{m_i\times d}.
\]
The goal is to keep $\tilde J_i^k$ close to $J_i(X^k)$ by refreshing a sampled part of the table at each iteration.

Specifically, agent $i$ generates a nonempty random set of component-direction pairs $\mathcal S_i^k\subseteq[m_i]\times\mathbb R^d$.
Each pair $(j,u)$ specifies a component $f_{i,j}$ and a unit direction $u$ along which fresh information can be queried. The forward estimator in~\eqref{eq:fm_2p} gives
\[
    \widehat\nabla_u f_{i,j}(x_i^k;\beta^k)
    =\frac{f_{i,j}(x_i^k+\beta^k u)-f_{i,j}(x_i^k)}{\beta^k}u
    \approx uu^\top\nabla f_{i,j}(x_i^k),
\]
where the approximation accounts for the finite-difference error. Thus each pair provides an estimate of the component gradient projected along $u$. Let
\[
    \mathcal I_i^k
    \coloneqq
    \{j\in[m_i]\mid \exists u\ \text{such that}\ (j,u)\in\mathcal S_i^k\}
\]
collect the component indices selected for refresh. Note that here we require the directions associated with any one component to be orthonormal,
so these corrections update distinct orthogonal parts of the stored vector and preserve its projection onto the remaining directions.

At every iteration, agent $i$ refreshes all sampled directions. For $j\in\mathcal I_i^k$, we set
\[
    s_{i,j}^{k+1}
    =s_{i,j}^k
    +\sum_{u:(j,u)\in\mathcal S_i^k}
    \left(
        \widehat\nabla_u f_{i,j}(x_i^k;\beta^k)
        -uu^\top s_{i,j}^k
    \right),
\]
and for $j\notin\mathcal I_i^k$, we set $s_{i,j}^{k+1}=s_{i,j}^k$. The correction subtracts the stored projection along each sampled direction and replaces it with a current finite-difference observation.

\paragraph{Gradient estimator $g_i^k$.}
The average of the stored vectors provides an inexpensive approximation of $\nabla f_i(x_i^k)$, but it may be inaccurate because the table contains older information.
We correct this average using a fresh batch of directional observations. Fix a positive integer batch size $R_i$.
Let $\mathcal R_i^k$ be a multiset of $R_i$ independent component-direction pairs $(j,u)$, where $j$ is drawn uniformly from $[m_i]$ and $u$ independently and uniformly from the unit sphere $\mathbb S_d$.
The batch is sampled independently of the past and of $\mathcal S_i^k$. All sums over $\mathcal R_i^k$ count repeated samples with multiplicity.
Using the table $\tilde J_i^k$ available at the start of the iteration, we form
\[
    g_i^k
    =\frac{1}{m_i}\sum_{j=1}^{m_i}s_{i,j}^k
    +\frac{d}{R_i}
    \sum_{(j,u)\in\mathcal R_i^k}
    \left(
        \widehat\nabla_u f_{i,j}(x_i^k;\beta^k)
        -uu^\top s_{i,j}^k
    \right).
\]
The first term averages all stored component-gradient estimates; the second estimates the correction needed to bring that average up to date.
The factor $d$ compensates for observing only one direction, since $\mathbb E_u[d\,uu^\top]=I_d$.
Together with uniform component sampling, this identity ensures that the stored terms cancel in expectation:
\[
    \mathbb E\!\left[g_i^k\mid X^k,\tilde J_i^k\right]
    =\frac{d}{m_i}\sum_{j=1}^{m_i}
      \mathbb E_u\!\left[\widehat\nabla_u f_{i,j}(x_i^k;\beta^k)\right]
    \approx\nabla f_i(x_i^k).
\]
Consequently, using an older table does not introduce an additional bias into $g_i^k$; the remaining bias comes from the finite differences.

The same correction explains the variance reduction. Ignoring the finite-difference error, independence of the $R_i$ draws gives
\[
    \operatorname{Var}\!\left(g_i^k\mid X^k,\tilde J_i^k\right)
    \approx\frac{d^2}{R_i}\operatorname{Var}_{j,u}\!\left(
        uu^\top\bigl(\nabla f_{i,j}(x_i^k)-s_{i,j}^k\bigr)
    \right),
\]
where the variance on the right is over a single uniformly sampled component $j$ and an independent uniform unit direction $u$, with $X^k$ and $\tilde J_i^k$ held fixed.
Thus the variance is governed by the error in the stored component gradients.
As the table becomes more accurate, the random corrections become smaller, even when the component gradients themselves do not vanish at the solution.

\paragraph{Conditions on table sampling.}
We now formalize the requirement that the refresh should cover all parts of the table over time.
For a stacked table $H=[H_1^\top,\ldots,H_n^\top]^\top$ with $H_i\in\mathbb R^{m_i\times d}$, define the sampling operator $\Pi^k$ by
\begin{equation}\label{eq:sampling_projection_definition}
[(\Pi^kH)_i]_{j,:}
\coloneqq
\left(
\sum_{u:(j,u)\in\mathcal S_i^k}
uu^\top[H_i]_{j,:}^\top
\right)^\top .
\end{equation}
Here $(\Pi^kH)_i$ is the block corresponding to agent $i$. The operator retains exactly the rows and directions selected for refresh. Collect the random choices at iteration $k$ in
\[
    \xi^k\coloneqq
    \{\mathcal S_i^k,\mathcal R_i^k\}_{i=1}^n,
    \qquad k\ge0,
\]
and define the filtration by
\begin{gather*}
    \mathcal F_0\coloneqq
    \sigma\!\left((\zeta_i^0,\tilde J_i^0)_{i=1}^n\right),\\
    \mathcal F_k\coloneqq
    \mathcal F_0\vee\sigma(\xi^0,\ldots,\xi^{k-1}),
    \qquad k\ge1.
\end{gather*}
Here $\sigma(\cdot)$ denotes the generated sigma-field, and $\vee$ denotes the join of sigma-fields.
Thus $X^k$ and $\tilde J^k$ are $\mathcal F_k$-measurable, and conditioning on $\mathcal F_k$ holds the entire history before the current sampling fixed.

\begin{assumption}\label{assump:sampling_operator}
The following conditions hold:
\begin{itemize}
    \item For every agent $i\in[n]$, the random sets $\mathcal S_i^k$, $k=0,1,2,\ldots$, are i.i.d., nonempty almost surely, and each set is independent of $\mathcal F_k$.
    \item For every iteration $k\ge0$, agent $i\in[n]$, and component $j\in[m_i]$, the set of sampled directions $\{u\mid(j,u)\in\mathcal S_i^k\}$ consists of orthonormal vectors.
    \item There is a common $\sigma\in(0,1]$ such that, for every deterministic stacked table $H$ and every $k\ge0$,
    \begin{equation}\label{eq:expected_sampling_projection}
        \mathbb E[\Pi^kH\mid\mathcal F_k]=\sigma H
        \qquad\text{almost surely}.
    \end{equation}
\end{itemize}
\end{assumption}

The orthonormality condition makes $\Pi^k$ an orthogonal projection for every realization: $(\Pi^k)^2=\Pi^k$ and $\|\Pi^kH\|_F^2=\langle\Pi^kH,H\rangle_F$.
The third condition ensures that every table direction is refreshed at the same average rate $\sigma$, determined by the sampled sets $\mathcal S_i^k$.
To see its role, fix the current iterates and ignore the finite-difference error. Then the refresh satisfies the approximate relation
\[
    \mathbb E\!\left[\tilde J_i^{k+1}-J_i(X^k)
    \,\middle|\,\mathcal F_k\right]
    \approx(1-\sigma)\bigl(\tilde J_i^k-J_i(X^k)\bigr).
\]
This suggests that the table can track the current component gradients as the iterates settle, thereby reducing the variance of $g_i^k$.

A concrete construction, following the centralized \AlgAbbrv{} method, is to draw an orthogonal matrix $Q_i^k\in\mathbb R^{d\times d}$,
sample a nonempty set of distinct pairs $(j,q)$ from $[m_i]\times[d]$, and include $(j,Q_i^ke_q)$ in $\mathcal S_i^k$. Distinct columns of $Q_i^k$ ensure orthogonality within each sampled row.
Choosing $Q_i^k=I_d$ gives coordinate directions; drawing $Q_i^k$ uniformly from the orthogonal group gives orthogonal directions whose individual distributions are uniform on the sphere.
The distribution of the selected pairs is chosen so that \eqref{eq:expected_sampling_projection} holds.

 If we use a unit-batch implementation of $R_i=1$ and $|\mathcal S_i^k|=1$,
then the estimator and table refresh each use an independently sampled uniform component and uniform unit direction.
Each iteration therefore requires at most four function evaluations per agent.

\subsection{Gradient Tracking Technique}

Each $g_i^k$ estimates a local gradient, whereas the agents seek a common solution of the global objective.
We therefore combine the estimators with neighbor averaging and a correction that incorporates changes in the local gradient estimates.
The gradient tracking scheme used here is similar to the exact diffusion algorithm introduced in \cite{yuan2019exactdiffusion}.

Agent $i$ stores an auxiliary vector $\zeta_i^k\in\mathbb R^d$ that serves as the input to the proximal operator. At the start of iteration $k$, it computes
\[
    x_i^k=\operatorname{prox}_{\gamma\psi}(\zeta_i^k),
\]
where $\gamma>0$ is the stepsize. This step incorporates the nonsmooth term $\psi$.
After forming $g_i^k$ and refreshing its table as described above, the agent constructs an intermediate vector $\chi_i^k$ to send to its neighbors:
\[
    \chi_i^k=
    \begin{cases}
        x_i^0-\gamma g_i^0, & k=0,\\
        \zeta_i^k+x_i^k-x_i^{k-1}
        -\gamma(g_i^k-g_i^{k-1}), & k\geq1.
    \end{cases}
\]
The first iteration uses a gradient step from the local proximal point. At later iterations, $\chi_i^k$ adds the change in $x_i^k-\gamma g_i^k$ to the previous auxiliary state $\zeta_i^k$.
In particular, the difference $g_i^k-g_i^{k-1}$ carries the new gradient information into the communication step while retaining information from previous exchanges. Agent $i$ then updates
\[
    \zeta_i^{k+1}
    =\frac{1}{2}\chi_i^k
    +\frac{1}{2}\sum_{\ell\in\mathcal N_i\cup\{i\}}
        w_{i\ell}\chi_\ell^k.
\]
This weighted average combines its own intermediate vector with those of its neighbors and supplies the next proximal input.

\begin{algorithm2e}[!tb]
\DontPrintSemicolon
\SetAlgoLined
\LinesNumbered
\SetKwInput{KwParam}{Parameters}
\SetKwInput{KwInit}{Initialize}
\KwParam{Stepsize $\gamma>0$; finite-difference radii $\beta^k>0$ for $k\geq0$; positive integer batch size $R_i$.}
\KwInit{ $\zeta_{i}^{0}\in\mathbb{R}^d$; $s_{i,j}^{0}\in\mathbb{R}^d$ for every $j\in[m_i]$.}
\For{$k=0,1,2,\ldots$}{
  Update the local estimate of the solution:\;
  $\displaystyle x_{i}^{k} = \operatorname{prox}_{\gamma \psi}(\zeta_{i}^{k})$.\;
  
  \vspace{3pt}
  Sample a nonempty set $\mathcal{S}_{i}^{k}\subseteq[m_{i}]\times\mathbb{R}^d$ satisfying Assumption~\ref{assump:sampling_operator}.\;
  
  Sample an indexed batch of $R_{i}$ independent pairs $\mathcal{R}_{i}^{k}\coloneqq \bigl((j_{i, r}^{k},u_{i, r}^{k})\bigr)_{r=1}^{R_{i}}$, where each component index is drawn uniformly from $[m_{i}]$ with replacement, and each direction is drawn uniformly from $\mathbb{S}_{d}$ \;
  \vspace{3pt}
  
  Construct the gradient estimator $g_{i}^{k}$ by:\;
  $\displaystyle g_{i}^{k} 
  = \frac{1}{m_{i}}\sum_{j=1}^{m_i}s_{i,j}^k + \frac{d}{R_{i}}\!\!\sum_{(j, u) \in\mathcal{R}_{i}^{k}}\!
  \bigl(\widehat\nabla_u f_{i,j}(x_{i}^{k};\beta^{k})
    -uu^\top s_{i,j}^{k}
  \bigr).$ \;
  \vspace{3pt}
  
  For each $j\in \mathcal{I}_{i}^{k}$, update the stored vector $s_{i,j}^{k+1}$ by:\;
  \vspace{-5pt}
  \[
    s_{i,j}^{k+1}
    =s_{i,j}^{k}
    +\sum\limits_{u:(j, u) \in \mathcal{S}_{i}^{k}}
    \Bigl(\widehat \nabla_{u} f_{i, j} (x_i^{k};\beta^{k})
    -u u^{\top}s_{i,j}^{k}\Bigr).
  \]\;
  \vspace{-15pt}
  
  For each $j\notin \mathcal{I}_{i}^{k}$, set $s_{i,j}^{k+1}=s_{i,j}^{k}$.\;
  \vspace{3pt}

  \eIf{$k = 0$}{
    Node $i$ computes its initial intermediate state variable $\chi_i^{0}$:\;
    $\displaystyle \chi_i^{0} = x_i^{0} - \gamma g_i^{0}$.\;
  }{
    Node $i$ computes its intermediate state variable $\chi_i^{k}$ using history:\;
    $\displaystyle \chi_i^{k} = \zeta_i^{k} + x_i^{k} - x_i^{k-1} - \gamma \left( g_i^{k} - g_i^{k-1} \right)$.\;
  }
  
  Node $i$ broadcasts $\chi_i^{k}$ to its neighbors and receives their corresponding values $\chi_\ell^{k}$.\;

  Node $i$ updates its $\zeta$-variable for the next iteration (for the accelerated variant, replace this update and the preceding exchange by Algorithm~\ref{alg:chebyshev_consensus}):\;
  $\displaystyle \zeta_i^{k+1} = \frac{1}{2} \chi_i^{k} + \frac{1}{2} \sum_{\ell \in \mathcal{N}_i \cup \{i\}} w_{i\ell} \chi_\ell^{k}$.\;
}
\caption{\AlgFullname{} (\AlgAbbrv{}) at each node $i$}\label{alg:zivr_distributed}
\end{algorithm2e}

\subsection{Chebyshev-Accelerated Consensus}
\label{subsec:chebyshev_consensus}

We use the standard Chebyshev consensus subroutine of~\cite{montijano2013chebyshev} to accelerate the communication step of Algorithm~\ref{alg:zivr_distributed}.
After computing $\chi_i^k$, each agent runs Algorithm~\ref{alg:chebyshev_consensus} to obtain its next proximal input $\zeta_i^{k+1}$ through $K$ neighbor-exchange rounds.
This replaces the single exchange and averaging update at every iteration, including $k=0$.

Let $\rho\coloneqq\|W-\mathbf 1_n\mathbf 1_n^\top/n\|_2<1$ be known to all agents; a common upper bound below one can be used in its place.
In the subroutine, $r$ indexes communication rounds within iteration $k$, and the scalar coefficients $a_r$ are computed locally from $\rho$.

\begin{algorithm2e}[htbp]
\DontPrintSemicolon
\SetAlgoLined
\LinesNumbered
\KwIn{Intermediate state $\chi_i^k$ from Algorithm~\ref{alg:zivr_distributed};
mixing weights $w_{i\ell}$; common spectral bound $0<\rho<1$;
integer number of rounds $K\geq1$.}
\KwOut{Next proximal input $\zeta_i^{k+1}$.}
Set $a_0=1$, $a_1=1/\rho$, and $\chi_i^{k,0}=\chi_i^k$.\;
Exchange $\chi_i^{k,0}$ with neighbors and set
$\displaystyle \chi_i^{k,1}=\sum_{\ell\in\mathcal N_i\cup\{i\}}
 w_{i\ell}\chi_\ell^{k,0}$.\;
\For{$r=1,\ldots,K-1$}{
    Set $a_{r+1}=2a_r/\rho-a_{r-1}$.\;
    Exchange $\chi_i^{k,r}$ with neighbors.\;
    $\displaystyle \chi_i^{k,r+1}
    =\frac{2a_r}{\rho a_{r+1}}
      \sum_{\ell\in\mathcal N_i\cup\{i\}}w_{i\ell}\chi_\ell^{k,r}
      -\frac{a_{r-1}}{a_{r+1}}\chi_i^{k,r-1}$.\;
}
\Return{$\displaystyle \zeta_i^{k+1}=\frac{\chi_i^k+\chi_i^{k,K}}{2}$}.\;
\caption{Chebyshev-accelerated communication step of
Algorithm~\ref{alg:zivr_distributed} at node $i$}
\label{alg:chebyshev_consensus}
\end{algorithm2e}

Each outer iteration uses $K$ communication rounds and no additional function queries. For $K=1$, the subroutine recovers the original update; if $\rho=0$, one ordinary averaging round suffices.
Theorem~\ref{thm:zivr_chebyshev_complexity} specifies $K$ and quantifies the resulting improvement in communication complexity.

\section{Convergence Result}
\label{section:convergence_result}

For both results, the initial primal vectors and table entries satisfy
\begin{equation}\label{eq:initialization_moments}
    \mathbb E\!\left[\sum_{i=1}^n\left(
      \|x_i^0\|^2+\sum_{j=1}^{m_i}\|s_{i,j}^0\|^2
    \right)\right]<\infty.
\end{equation}
Here $x_i^0=\operatorname{prox}_{\gamma\psi}(\zeta_i^0)$, so arbitrary deterministic initialization satisfies this condition.

Define the communication matrices
\begin{equation}\label{eq:communication_matrices}
    A\coloneqq\frac{I_n+W}{2},\qquad
    B\coloneqq I_n-A=\frac{I_n-W}{2},\qquad
    C\coloneqq B^{1/2},
\end{equation}
where $B^{1/2}$ is the symmetric positive-semidefinite square root of $B$. We write $\lambda_2(B)$ for the second-smallest eigenvalue of $B$.

Let $M\coloneqq \sum_{i=1}^{n}m_i$ and $m_{\max}\coloneqq \max_i m_i$. Define the expected number of sampled table directions and the estimator sampling constants by
\begin{equation}\label{eq:sampling_constants}
\begin{gathered}
    \nu\coloneqq
    \mathbb E\!\left[\sum_{i=1}^n|\mathcal S_i^k|
      \,\middle|\,\mathcal F_k\right],\\
    \rho_0 \coloneqq \max_{1\le i\le n}\left(\frac{1}{m_i}+\frac{2d}{m_iR_i}\right),
    \qquad
    \rho_J \coloneqq \max_{1\le i\le n}\frac{2d}{m_iR_i}.
\end{gathered}
\end{equation}
The conditional expectation defining $\nu$ is a deterministic constant independent of $k$ by Assumption~\ref{assump:sampling_operator}.
Since $\operatorname{rank}(\Pi^k)=\sum_i|\mathcal S_i^k|$, taking operator traces in \eqref{eq:expected_sampling_projection} also gives, almost surely, $\nu=\sigma Md$ (and hence $0<\nu\le Md$).
This identity will simplify the finite-difference bias terms below.

Let $x^*$ be the unique minimizer of $h$ and $X^*\coloneqq\mathbf1(x^*)^\top$. For a stepsize $\gamma>0$ and $a>0$, define the initial-error prefactor
\begin{equation}\label{eq:initial_error_prefactor}
\begin{aligned}
    \mathcal D(a)\coloneqq{}&
    \sum_{i=1}^n\mathbb E\|x_i^0-x^*\|^2
    +\frac{4\gamma^2\rho_J}{\sigma}
      \sum_{i=1}^n\sum_{j=1}^{m_i}
      \mathbb E\|s_{i,j}^0-\nabla f_{i,j}(x^*)\|^2\\
    &+\frac{\gamma^2}{a}
      \sum_{i=1}^n\|\nabla f_i(x^*)-\nabla f(x^*)\|^2.
\end{aligned}
\end{equation}

\begin{theorem}[Convergence rate]\label{thm:zivr_convergence_rate}
Consider Algorithm~\ref{alg:zivr_distributed} with a fixed finite-difference radius $\beta>0$.
Suppose Assumptions~\ref{assump:communication_network}, \ref{assump:basic_problem}, and~\ref{assump:sampling_operator} hold. Choose
\[
    0 < \gamma \le \frac{1}{m_{\max}L(\rho_0+4\rho_J)},
\]
and define
\[
    \tau\coloneqq \min\left\{\frac{\gamma\mu}{2},\,\frac{\sigma}{4}\right\},
    \qquad
    \delta\coloneqq \max\{1-\tau,\,1-\lambda_2(B)\}.
\]
Then $\delta<1$. Define the complete finite-difference error floor
\begin{equation}\label{eq:beta_error_floor}
    \mathcal B_\beta\coloneqq
    \frac{5\gamma^2dL^2(n+M\rho_J)\beta^2}
    {(1-\delta)^2}.
\end{equation}
For all $k\ge 1$,
\begin{equation}\label{eq:main_primal_rate}
    \mathbb{E}\|X^k-X^*\|_F^2
    \le
    \left(\frac{1+\delta}{2}\right)^{k-1}
    \mathcal D\bigl(\lambda_2(B)\bigr)
    +
    \mathcal B_\beta.
\end{equation}
\end{theorem}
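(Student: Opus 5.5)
We rewrite Algorithm~\ref{alg:zivr_distributed} in stacked matrix form and analyze it as a perturbed primal--dual fixed-point iteration. Write $Z^k,X^k,G^k$ for the stacked $\zeta_i^k,x_i^k,g_i^k$. Since $\zeta^{k+1}=A\chi^k$, the recursion telescopes. Introduce a dual variable $Y^k$ with $Y^0$ chosen so that the $k=0$ case fits. The iteration then becomes
\[
Z^{k+1}=A(X^k-\gamma G^k)-C Y^k,\qquad Y^{k+1}=Y^k+C Z^{k+1},
\]
which is the exact-diffusion/proximal primal--dual form with $B=C^2$ and $X^k=\operatorname{prox}_{\gamma\psi}(Z^k)$ applied row-wise.

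Next we identify the fixed point. From the optimality condition $0\in\nabla f(x^*)+\partial\psi(x^*)$ we obtain $Z^*$ with $\operatorname{prox}_{\gamma\psi}(Z^*)=X^*$, $\mathbf1^\top Z^*$ consistent, and a $Y^*$ in $\operatorname{range}(C)$ satisfying $CY^*=-\gamma B\,\nabla F(X^*)$ up to the $\mathbf1$-component. The heterogeneity $\nabla f_i(x^*)-\nabla f(x^*)$ enters only through $Y^*$. Its size in the $\lambda_2(B)^{-1}$-weighted norm produces the last term of $\mathcal D(a)$ with $a=\lambda_2(B)$.

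We use the Lyapunov function
\[
V^k=\|Z^k-Z^*\|_F^2+\|Y^k-Y^*\|_{(I-B)^{\dagger}\text{-type}}^2+c\,\gamma^2\|\tilde J^k-J(X^*)\|_F^2,
\]
where $J(X^*)$ is the table of $\nabla f_{i,j}(x^*)$ and $c=4\rho_J/\sigma$. The one-step contraction proceeds in three parts.

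\begin{enumerate}
\item \emph{Deterministic part.} Firm nonexpansiveness of the prox, $\mu$-strong convexity of each $f_i$, and cocoercivity of the convex $L$-smooth components give contraction of the $(Z,Y)$ part by $1-\gamma\mu/2$ in the primal direction and $1-\lambda_2(B)$ in the dual direction. This uses the standard exact-diffusion inequality $\|A X-CY\|^2+\|Y\|^2$ bookkeeping.

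\item \emph{Variance part.} Expanding $\mathbb E\|G^k-\nabla F(X^k)-b^k\|^2$ with $\mathbb E[d\,uu^\top]=I$ and $\mathbb E\|d\,uu^\top v\|^2=d\|v\|^2$ gives a bound of order $\rho_0\cdot m_i\|\nabla F(X^k)-\nabla F(X^*)\|$-type Bregman terms plus $\rho_J\|\tilde J^k-J(X^*)\|_F^2$, plus finite-difference bias terms of size $dL^2\beta^2$. Here $b^k$ is the finite-difference bias, controlled by Lemma~\ref{lem:forward_difference_error}.

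\item \emph{Table part.} Assumption~\ref{assump:sampling_operator} and the projection property of $\Pi^k$ yield
\[
\mathbb E\|\tilde J^{k+1}-J(X^*)\|^2\le(1-\sigma)\|\tilde J^k-J(X^*)\|^2+\sigma\|J(X^k)-J(X^*)\|^2+\nu\,\text{(bias)}.
\]
By cocoercivity, $\|J(X^k)-J(X^*)\|^2$ is bounded by $2m_{\max}L$ times the Bregman divergence.
\end{enumerate}

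The stepsize condition $\gamma\le 1/(m_{\max}L(\rho_0+4\rho_J))$ ensures that the Bregman terms generated in the variance and table parts are absorbed by the negative Bregman term from the deterministic part. This gives
\[
\mathbb E V^{k+1}\le\delta V^k+\text{const}\cdot\gamma^2 dL^2(n+\nu/d\,\cdots)\beta^2.
\]
Using $\nu=\sigma Md$, the bias constant becomes $\propto\gamma^2dL^2(n+M\rho_J)\beta^2$.

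The bias enters cross terms linearly rather than quadratically. We handle it with Young's inequality with weight $(1-\delta)/2$, which turns the rate into $(1+\delta)/2$ and the floor into $\mathcal O(\text{bias}^2/(1-\delta)^2)$. This matches the form of $\mathcal B_\beta$. Unrolling, and bounding $\|X^k-X^*\|^2\le\|Z^k-Z^*\|^2\le V^k$ via nonexpansiveness of the prox, gives the theorem. The first step, from $k=0$ to $k=1$, relates $V^1$ to $\mathcal D$, which explains the exponent $k-1$. Finally, $\delta<1$ because $\lambda_2(B)>0$ by connectivity, with $\lambda_{\min}(W)>-1$ ensuring $B\preceq I$.

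\textbf{Main obstacle.} The hardest step is choosing the dual-variable metric and the Lyapunov weights so that a single contraction factor $\max\{1-\tau,1-\lambda_2(B)\}$ emerges, while the gradient-noise terms remain absorbable with precisely the stated stepsize constant. I would first try the exact-diffusion weighting $\|Y\|^2_{(I-B)^{-1}}$ restricted to $\operatorname{range}(C)$, and tune $c$ afterwards.
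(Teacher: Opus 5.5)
Your plan is essentially the paper's proof. It contracts a primal/dual/table energy with table weight $4\gamma^2\rho_J/\sigma$, absorbs the $\|J(X^k)-J(X^*)\|_F^2$ terms via strong monotonicity plus cocoercivity under the stated stepsize, bounds the dual block with the spectral gap, and treats the finite-difference error by a pathwise Young split. Take $\eta=(1-\delta)/(2\delta)$ rather than literally $(1-\delta)/2$: this gives exactly $(1+\eta)\delta=(1+\delta)/2$ and $1+1/\eta=(1+\delta)/(1-\delta)$, which is what produces the constant $5$ in $\mathcal B_\beta$.

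The paper lifts $\zeta^k=AZ^k$ and works with $\|Z\|_A^2+\|Y\|_F^2$. Your unlifted metric $\|\zeta\|_F^2+\|Y\|_{A^{-1}}^2$ also works, since $A\succ0$ and, for $\Phi=X-\gamma G$, $\|A\Phi-CY\|_F^2+\|AY+CA\Phi\|_{A^{-1}}^2=\|\Phi\|_A^2+\|Y\|_F^2$.

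One slip to fix: the dual fixed point satisfies $CY^*=-\gamma A\bigl(\bar G(X^*)-\mathbf1\nabla f(x^*)^\top\bigr)$, with $A$ and not $B$. This is harmless, because only $\|CY^*\|_F\le\gamma\|\bar G(X^*)-\mathbf1\nabla f(x^*)^\top\|_F$ enters $\mathcal D(\lambda_2(B))$.
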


We count each fresh component--direction pair as one oracle unit, requiring at most two function values, and measure communication by synchronous neighbor-exchange rounds.
\begin{corollary}[Oracle and communication costs without acceleration]
\label{cor:unaccelerated_complexity}
Under the conditions of Theorem~\ref{thm:zivr_convergence_rate}, suppose that, for every $i\in[n]$ and $k\ge0$,
\[
    m_i=m,\qquad S_i\coloneqq|\mathcal S_i^k|=R_i=R,
    \qquad R\in[md].
\]
Let $\kappa=L/\mu$ and $\rho=\|W-n^{-1}\mathbf1\mathbf1^\top\|_2<1$, and choose
\begin{equation}\label{eq:unaccelerated_batch_parameters}
    \gamma=\frac{R}{L(R+10d)},\qquad
    \tau=\min\left\{\frac{R}{2\kappa(R+10d)},
      \frac{R}{4md}\right\}.
\end{equation}
For a target $\epsilon>0$, choose
\begin{gather*}
    \beta^2\le
    \frac{\epsilon\min\{\tau,(1-\rho^2)/4\}^2}
    {10\gamma^2ndL^2(1+2d/R)},\\
    k=1+\left\lceil2\max\left\{\frac1\tau,\frac4{1-\rho^2}\right\}
    \log\max\left\{1,
    \frac{2\mathcal D((1-\rho^2)/4)}{\epsilon}\right\}\right\rceil.
\end{gather*}
Then $\mathbb E\|X^k-X^*\|_F^2\le\epsilon$. The oracle cost per agent and the number of communication rounds are, respectively,
\begin{gather*}
    \mathcal O\!\left(
      \left[\kappa(R+d)+md
        +\frac{R}{1-\rho^2}\right]
      \log\left(2+\frac{\mathcal D((1-\rho^2)/4)}{\epsilon}\right)\right),\\
    \mathcal O\!\left(
      \left[\kappa\left(1+\frac dR\right)
        +\frac{md}{R}+\frac1{1-\rho^2}\right]
      \log\left(2+\frac{\mathcal D((1-\rho^2)/4)}{\epsilon}\right)\right).
\end{gather*}
\end{corollary}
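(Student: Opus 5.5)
The plan is to obtain the corollary as a direct specialization of Theorem~\ref{thm:zivr_convergence_rate}. There are four things to do: check the stepsize condition, identify $\tau$ and $\sigma$ under the balanced sampling, lower-bound $1-\delta$ in terms of $\rho$, and then make both the contraction term and the error floor $\mathcal B_\beta$ at most $\epsilon/2$.

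\emph{Constants and stepsize.} Since $m_i=m$ and $R_i=R$, we have $m_{\max}=m$, $\rho_J=2d/(mR)$ and $\rho_0=1/m+2d/(mR)$. Hence
\[
m_{\max}L(\rho_0+4\rho_J)=L(1+10d/R)=L(R+10d)/R,
\]
so the choice $\gamma=R/(L(R+10d))$ is exactly the admissible upper bound in Theorem~\ref{thm:zivr_convergence_rate}. For the refresh rate, the identity $\nu=\sigma Md$ together with $\nu=nR$ and $M=nm$ gives $\sigma=R/(md)$. This is why $R\in[md]$ is required, since it ensures $\sigma\le1$. Then $\gamma\mu/2=R/(2\kappa(R+10d))$ and $\sigma/4=R/(4md)$, so the $\tau$ in the corollary coincides with the $\tau$ of the theorem.

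\emph{Network term.} Write $B=(I-W)/2$. Its second-smallest eigenvalue is $\lambda_2(B)=(1-\lambda_{n-1}(W))/2\ge(1-\rho)/2$. Since $1-\rho\ge(1-\rho^2)/2$, this gives $\lambda_2(B)\ge(1-\rho^2)/4$, and therefore
\[
1-\delta=\min\{\tau,\lambda_2(B)\}\ge\min\{\tau,(1-\rho^2)/4\}\eqqcolon\eta .
\]
Because $\mathcal D(a)$ is decreasing in $a$ (only the last term of \eqref{eq:initial_error_prefactor} depends on $a$, through $\gamma^2/a$), we also have $\mathcal D(\lambda_2(B))\le\mathcal D((1-\rho^2)/4)$.

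\emph{Two halves of $\epsilon$.} For the contraction term, use $((1+\delta)/2)^{k-1}\le\exp(-(k-1)(1-\delta)/2)\le\exp(-(k-1)\eta/2)$. Since $2/\eta\le2\max\{1/\tau,4/(1-\rho^2)\}$, the stated $k$ makes this at most $\epsilon/(2\mathcal D((1-\rho^2)/4))$. If $\mathcal D((1-\rho^2)/4)\le\epsilon/2$, the bound is trivial, which is what the $\max\{1,\cdot\}$ inside the logarithm handles. For the error floor, $n+M\rho_J=n(1+2d/R)$, so
\[
\mathcal B_\beta\le5\gamma^2dL^2n(1+2d/R)\beta^2/\eta^2\le\epsilon/2
\]
by the stated choice of $\beta^2$. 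Adding the two halves in \eqref{eq:main_primal_rate} yields $\mathbb E\|X^k-X^*\|_F^2\le\epsilon$.

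\emph{Costs.} Each iteration uses $|\mathcal S_i^k|+R_i=2R$ fresh pairs per agent and one communication round. The iteration count is
\[
\mathcal O\bigl(\max\{1/\tau,1/(1-\rho^2)\}\log(\cdot)\bigr),\qquad 1/\tau\le2\kappa(1+10d/R)+4md/R .
\]
This count is directly the communication bound. Multiplying it by $2R$ gives the oracle bound $\kappa(R+d)+md+R/(1-\rho^2)$, up to constants.

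The only delicate point is the spectral comparison $\lambda_2(B)\ge(1-\rho^2)/4$, together with the monotonicity of $\mathcal D$ that lets us replace $\lambda_2(B)$ by this quantity. The rest is bookkeeping.
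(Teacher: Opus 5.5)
Your proposal is correct and follows essentially the same route as the paper's proof. You identify $\sigma=R/(md)$ via the global identity $\nu=\sigma Md$, which is equivalent to the paper's per-agent trace argument. You then check the stepsize and $\tau$, use $\lambda_2(B)\ge(1-\rho)/2\ge(1-\rho^2)/4$ together with the monotonicity of $\mathcal D(\cdot)$, split $\epsilon$ evenly between the contraction term and $\mathcal B_\beta$, and count $2R$ oracle units and one communication round per iteration, exactly as the paper does.
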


\begin{remark}[Unit batches without acceleration]
\label{rem:unaccelerated_unit_batch}
For $R=1$, corollary~\ref{cor:unaccelerated_complexity} gives both per-agent oracle cost and communication cost of
\[
    \widetilde{\mathcal{O}}\!\left(\left[d(m+\kappa)+\frac1{1-\rho^2}\right]
      \log\frac1{\epsilon}\right).
\]
The bound above is reported for Distributed \AlgAbbrv{} in Table~\ref{tab:related_work}.

Compared with the $\mathcal O(md^2\kappa^2(1-\rho^2)^{-2}\log(1/\epsilon))$ oracle and communication bounds of GT--SAGA--2pVRGE \cite{mu2026distributed},
the bound above has linear dependence on $d$ and additive dependence on $m$ and $\kappa$, with a separate additive network penalty.
The method also accommodates the shared proximal regularizer while retaining a constant maximum batch size. If the network is well-conditioned, i.e.
$(1-\rho^2)^{-1}=\mathcal O(d(m+\kappa))$, its per-agent oracle cost reduces to $\widetilde{\mathcal{O}}(d(m+\kappa)\log(1/\epsilon))$,
matching the centralized unaccelerated bounds of ZPDVR and ZIVR \cite{di2024double,zhang2026zivr} under the logarithmic conventions above.
\end{remark}

\begin{theorem}[Chebyshev-accelerated convergence rate]
\label{thm:zivr_chebyshev_complexity}
Suppose Assumptions~\ref{assump:communication_network}, \ref{assump:basic_problem}, and~\ref{assump:sampling_operator} hold.
Retain the general sampling constants in \eqref{eq:sampling_constants}, and choose $\gamma$ and $\tau$ as in Theorem~\ref{thm:zivr_convergence_rate}. Let
\[
    \rho\coloneqq
    \left\|W-\frac{1}{n}\mathbf1\mathbf1^\top\right\|_2<1,
\]
and suppose that $\rho$, or a common upper bound below one used in its place, is known to all agents.
Consider Algorithm~\ref{alg:zivr_distributed} with a fixed finite-difference radius $\beta>0$ and the communication step in Algorithm~\ref{alg:chebyshev_consensus}, using
\begin{equation}\label{eq:chebyshev_round_choice}
    K\coloneqq\left\lceil
      \max\left\{1,\sqrt{\frac{8\tau}{1-\rho^2}}\right\}
    \right\rceil
\end{equation}
rounds when $0<\rho<1$, and one ordinary round when $\rho=0$.

Then, for every $k\ge1$,
\begin{equation}
    \mathbb E\|X^k-X^*\|_F^2
    \le\left(1-\frac\tau2\right)^{k-1}
    \mathcal D(\tau)
    +\mathcal B_{{\rm acc},\beta},
    \label{eq:chebyshev_primal_rate}
\end{equation}
where
\begin{equation}\label{eq:accelerated_beta_error_floor}
    \mathcal B_{{\rm acc},\beta}
    \coloneqq\frac{5\gamma^2dL^2(n+M\rho_J)\beta^2}{\tau^2}.
\end{equation}

\end{theorem}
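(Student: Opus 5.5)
The plan is to reuse the Lyapunov analysis behind Theorem~\ref{thm:zivr_convergence_rate}, replacing the single mixing step $A=(I_n+W)/2$ with the Chebyshev-filtered operator. Write $T_K(\cdot)$ for the degree-$K$ Chebyshev polynomial. Algorithm~\ref{alg:chebyshev_consensus} computes $\chi^{k,K}=P_K(W)\chi^k$ with
\[
P_K(t)=\frac{T_K(t/\rho)}{T_K(1/\rho)} .
\]
The update is therefore $\zeta^{k+1}=\tilde A\chi^k$, where $\tilde A\coloneqq (I_n+P_K(W))/2$. Define $\tilde B\coloneqq I_n-\tilde A$ and $\tilde C\coloneqq\tilde B^{1/2}$.

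The first step is to verify that $\tilde A$ has every structural property the proof of Theorem~\ref{thm:zivr_convergence_rate} uses from $A$. It is symmetric. Since $P_K(1)=1$, we have $\tilde A\mathbf1=\mathbf1$. On the other eigenvalues $t\in[-\rho,\rho]$ of $W$, the bound $|P_K(t)|\le 1/T_K(1/\rho)<1$ gives $0\preceq\tilde B\preceq I_n$, and $\tilde B$ has null space exactly $\operatorname{span}\{\mathbf1\}$. The analysis uses nonnegativity of $W$ only through these spectral facts, so sign-indefinite weights in $P_K(W)$ cause no difficulty. The operator-splitting/primal-dual representation then goes through verbatim, with $\tilde B$ and $\tilde C$ in place of $B$ and $C$. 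The result is the same bound \eqref{eq:main_primal_rate}, with $\lambda_2(B)$ replaced by
\[
\lambda_2(\tilde B)\ \ge\ \tfrac12\Bigl(1-\tfrac{1}{T_K(1/\rho)}\Bigr).
\]

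The second step is the spectral estimate, which I expect to be the main technical point. I want to show that the choice \eqref{eq:chebyshev_round_choice} gives $\lambda_2(\tilde B)\ge\tau$, so that
\[
\tilde\delta=\max\{1-\tau,\,1-\lambda_2(\tilde B)\}=1-\tau .
\]
Use $T_K(1/\rho)=\cosh\bigl(K\operatorname{arccosh}(1/\rho)\bigr)$ together with
\[
\operatorname{arccosh}(1/\rho)\ \ge\ \sqrt{2(1/\rho-1)}\ \ge\ \sqrt{1-\rho^2}.
\]
Then $K\ge\sqrt{8\tau/(1-\rho^2)}$ gives $K\operatorname{arccosh}(1/\rho)\ge\sqrt{8\tau}$. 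Since $\cosh y\ge 1+y^2/2$, it follows that $T_K(1/\rho)\ge 1+4\tau$. Hence
\[
\lambda_2(\tilde B)\ \ge\ \tfrac12\cdot\frac{4\tau}{1+4\tau}\ \ge\ \tau,
\]
using $\tau\le\sigma/4\le1/4$. Two edge cases need separate handling. When $K=1$, the argument just recovers $\lambda_2(B)=(1-\lambda_2^{\max}(W))/2\ge(1-\rho)/2$, and the case split inside the $\max$ covers it. When $\rho=0$, we have $P_1(W)=\mathbf1\mathbf1^\top/n$, so $\lambda_2(\tilde B)=1/2\ge\tau$.

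The final step is to substitute into the general bound. Because $\lambda_2(\tilde B)\ge\tau$ and $\mathcal D(a)$ is decreasing in $a$, the initial term is bounded by $\mathcal D(\tau)$. The contraction factor is $(1+\tilde\delta)/2=1-\tau/2$. The error floor \eqref{eq:beta_error_floor} becomes $\mathcal B_{{\rm acc},\beta}$, since $(1-\tilde\delta)^2=\tau^2$. The stepsize condition is unchanged because it never involved the graph.

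The one place needing care is the dependence of the consensus term in the Lyapunov function on $\lambda_2$. In the original proof, the heterogeneity term $\gamma^2 a^{-1}\sum_i\|\nabla f_i(x^*)-\nabla f(x^*)\|^2$ arises from the dual optimum lying in $\operatorname{range}(C)$ with norm controlled by $\lambda_2(B)^{-1}$. I would check that this step carries over with $\tilde C$, and that using $\tau$ in place of the larger $\lambda_2(\tilde B)$ only loosens the bound.
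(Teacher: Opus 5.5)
Your route matches the paper's, but one step of the spectral estimate is false as written and needs a different justification. The shared route is this: replace $A$ by $A_K=(I_n+P_K(W))/2$, and note that the analysis uses only the spectral properties \eqref{eq:analysis_communication_properties}, so sign-indefinite entries of $P_K(W)$ do no harm. Then show $\lambda_2(B_K)\ge\tau$, so that $\delta_K=1-\tau$, and substitute. Using the monotonicity of $\mathcal D(a)$ is equivalent to the paper's choice $a=\tau\le\lambda_2(B_K)$ in \eqref{eq:perturbed_lyapunov_rate}. The $Y^*$ step you flag does carry over, because $C_KY^*=-\gamma(\bar G(X^*)+S^*)$ does not depend on the communication matrix.

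The problem is in the spectral estimate you single out as the main technical point. The claim $\operatorname{arccosh}(1/\rho)\ge\sqrt{2(1/\rho-1)}$ holds in the opposite direction. Write $y=\operatorname{arccosh}(x)$. Then $x-1=\cosh y-1\ge y^2/2$, which is the same bound you invoke two lines later. Hence $\operatorname{arccosh}(x)\le\sqrt{2(x-1)}$, strictly for $x>1$; for instance $\operatorname{arccosh}2\approx1.317<\sqrt2$.

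The bound you actually need, $\alpha\coloneqq\operatorname{arccosh}(1/\rho)\ge\sqrt{1-\rho^2}$, is still true, but it needs a different argument. Use $\tanh\alpha=\sqrt{1-1/\cosh^2\alpha}=\sqrt{1-\rho^2}$ together with $\tanh\alpha\le\alpha$, as the paper does. With that replacement, the rest of your chain is correct: $K\alpha\ge\sqrt{8\tau}$, then $T_K(1/\rho)\ge1+4\tau$, then $\lambda_2(B_K)\ge2\tau/(1+4\tau)\ge\tau$.

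Two minor points. First, the case $K=1$ needs no separate treatment, because \eqref{eq:chebyshev_round_choice} always gives $K\ge\sqrt{8\tau/(1-\rho^2)}$. Second, you assert the identity $\chi^{k,K}=P_K(W)\chi^k$ without proof. It must hold at every iteration, including $k=0$, for Lemma~\ref{lem:operator_splitting} to apply with $A_K$, so verify it by induction from $a_r=T_r(1/\rho)$ and the three-term recurrence.
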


\begin{corollary}[Oracle and communication costs with Chebyshev acceleration]
\label{cor:accelerated_complexity}
Under the conditions of Theorem~\ref{thm:zivr_chebyshev_complexity}, suppose that, for every $i\in[n]$ and $k\ge0$,
\[
    m_i=m,\qquad S_i\coloneqq|\mathcal S_i^k|=R_i=R,
    \qquad R\in[md].
\]
Let $\kappa=L/\mu$ and choose
\begin{equation}\label{eq:accelerated_batch_parameters}
    \gamma=\frac{R}{L(R+10d)},\qquad
    \tau=\min\left\{\frac{R}{2\kappa(R+10d)},
      \frac{R}{4md}\right\}.
\end{equation}
For a target $\epsilon>0$, choose
\begin{gather*}
    \beta^2\le
    \frac{\epsilon\tau^2}{10\gamma^2ndL^2(1+2d/R)},\\
    k=1+\left\lceil\frac{2}{\tau}
    \log\max\left\{1,\frac{2\mathcal D(\tau)}{\epsilon}\right\}\right\rceil.
\end{gather*}
Then $\mathbb E\|X^k-X^*\|_F^2\le\epsilon$. With $K$ chosen by \eqref{eq:chebyshev_round_choice}, the oracle cost per agent is
\[
    \mathcal O\!\left(
      \left[\kappa(R+d)+md\right]
      \log\left(2+\frac{\mathcal D(\tau)}{\epsilon}\right)\right),
\]
and the number of communication rounds is
\[
    \mathcal O\!\left(\left[
      \kappa\left(1+\frac dR\right)+\frac{md}{R}
      +\sqrt{\frac{\kappa(1+d/R)+md/R}{1-\rho^2}}\right]
      \log\left(2+\frac{\mathcal D(\tau)}{\epsilon}\right)\right).
\]
\end{corollary}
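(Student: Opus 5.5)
The plan is to derive this corollary directly from Theorem~\ref{thm:zivr_chebyshev_complexity}, in three steps. First check that the chosen stepsize is admissible. Then make each of the two terms in \eqref{eq:chebyshev_primal_rate} at most $\epsilon/2$. Finally count oracle calls and communication rounds.

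\emph{Admissibility.} With $m_i=m$ and $R_i=R$ we have
\[
\rho_0=\frac1m+\frac{2d}{mR},\qquad \rho_J=\frac{2d}{mR},
\]
so
\[
m_{\max}L(\rho_0+4\rho_J)=L\Bigl(1+\frac{10d}{R}\Bigr)=\frac{L(R+10d)}{R}.
\]
Hence $\gamma=R/(L(R+10d))$ meets the stepsize condition with equality. Since $|\mathcal S_i^k|=R$, the identity $\nu=\sigma Md$ gives
\[
\sigma=\frac{nR}{nmd}=\frac{R}{md}.
\]
Therefore $\tau=\min\{\gamma\mu/2,\sigma/4\}$ agrees with \eqref{eq:accelerated_batch_parameters}, and $R\le md$ ensures $\sigma\le1$.

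\emph{The two error terms.} For the bias floor, $M\rho_J=nm\cdot 2d/(mR)=2nd/R$, so
\[
n+M\rho_J=n\Bigl(1+\frac{2d}{R}\Bigr),
\]
and \eqref{eq:accelerated_beta_error_floor} becomes
\[
\mathcal B_{{\rm acc},\beta}=\frac{5\gamma^2 dL^2 n(1+2d/R)\beta^2}{\tau^2}.
\]
The stated bound on $\beta^2$ then gives $\mathcal B_{{\rm acc},\beta}\le \epsilon/2$. For the contraction term, use $(1-\tau/2)^{k-1}\le e^{-\tau(k-1)/2}$. With $k-1\ge (2/\tau)\log\max\{1,2\mathcal D(\tau)/\epsilon\}$, this makes the first term at most $\epsilon/2$. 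Adding the two halves gives $\mathbb E\|X^k-X^*\|_F^2\le\epsilon$.

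\emph{Oracle cost.} Each iteration uses $R+R$ fresh component--direction pairs per agent, i.e.\ $\mathcal O(R)$ oracle units. This gives a total of $\mathcal O((R/\tau)\log(2+\mathcal D(\tau)/\epsilon))$. Since
\[
\frac{1}{\tau}=\max\Bigl\{\frac{2\kappa(R+10d)}{R},\,\frac{4md}{R}\Bigr\},
\]
we get $R/\tau=\mathcal O(\kappa(R+d)+md)$. The ``$1+$'' in $k$ and the ceiling are absorbed by the additive $2$ inside the logarithm, together with $R/\tau\ge R\ge1$.

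\emph{Communication cost.} Each iteration uses $K$ rounds. Using $\lceil\max\{1,y\}\rceil\le 1+y$,
\[
kK=\mathcal O\Bigl(\Bigl(\frac1\tau+\frac{1}{\tau}\sqrt{\frac{8\tau}{1-\rho^2}}\Bigr)\log(\cdot)\Bigr)
=\mathcal O\Bigl(\Bigl(\frac1\tau+\sqrt{\frac{1}{\tau(1-\rho^2)}}\Bigr)\log(\cdot)\Bigr).
\]
Substituting $1/\tau=\mathcal O(\kappa(1+d/R)+md/R)$ gives the stated round count. The case $\rho=0$ is trivial, since $K=1$ and the bound still holds.

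\emph{Main obstacle.} The only delicate point is the bookkeeping of the integer rounding and the factor $1+k$. I would handle this with the elementary inequalities $\lceil y\rceil\le 1+y$, $\log\max\{1,z\}\le\log(2+z)$, and $1/\tau\ge1$, so that all additive constants are absorbed into the $\mathcal O(\cdot)$. Beyond this, everything follows from Theorem~\ref{thm:zivr_chebyshev_complexity}.
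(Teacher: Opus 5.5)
Your proposal is correct and follows essentially the same route as the paper's proof. You verify admissibility via $\rho_0,\rho_J$ and $\sigma=R/(md)$, split the bound of Theorem~\ref{thm:zivr_chebyshev_complexity} into two $\epsilon/2$ halves using $1-\tau/2\le e^{-\tau/2}$, and count $2Rk$ oracle units and $kK$ rounds with $\tau^{-1}=\Theta(\kappa(1+d/R)+md/R)$, exactly as the paper does; the only cosmetic differences are that you get $\sigma$ from the global identity $\nu=\sigma Md$ rather than a per-agent trace, and you use the slightly sharper $\lceil\max\{1,y\}\rceil\le 1+y$ instead of the paper's $\le 2+y$.
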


\begin{remark}[Unit batches with Chebyshev acceleration]
\label{rem:accelerated_unit_batch}
For $R=1$, Corollary~\ref{cor:accelerated_complexity} gives per-agent oracle cost
\[
    \widetilde{\mathcal{O}}\!\left(d(m+\kappa)\log\frac1{\epsilon}\right)
\]
and communication cost
\[
    \widetilde{\mathcal{O}}\!\left(\left[d(m+\kappa)+
      \sqrt{\frac{d(m+\kappa)}{1-\rho^2}}\right]
      \log\frac1{\epsilon}\right).
\]
These are the rates reported for Distributed \AlgAbbrv{}--Cheb in Table~\ref{tab:related_work}.
Each iteration uses two component--direction pairs per agent, so both the average and maximum batch costs are $\mathcal O(1)$.
\end{remark}

\begin{remark}[Effect of communication acceleration]
\label{rem:chebyshev_general_complexity}
Corollary~\ref{cor:unaccelerated_complexity} shows that poor connectivity can increase both oracle and communication costs.
Chebyshev acceleration removes this spectral-gap factor from the oracle bound, at the cost of $K$ communication rounds per iteration. For fixed initialization, sampling parameters,
and stepsize, $\mathcal D(\tau)$ and the admissible smoothing radius in Corollary~\ref{cor:accelerated_complexity} have no explicit dependence on the communication matrix.

If the network is ill-conditioned, with $1-\rho^2\ll\tau$, acceleration improves the communication bound's network dependence from $(1-\rho^2)^{-1}$ to $(\tau(1-\rho^2))^{-1/2}$, apart from logarithmic factors.
For $1-\rho^2\ge\tau$, the ordinary algorithm already has an iteration bound of order $\tau^{-1}$, so acceleration need not improve the cost bounds.
\end{remark}

\section{Convergence Analysis}
\label{section:convergence_analysis}

We express Algorithm~\ref{alg:zivr_distributed} as an operator recursion and establish contraction for a reference update with exact directional observations.
A uniform bound on the finite-difference perturbation then gives the convergence rate.
For Chebyshev acceleration, we state the required spectral properties of the communication step and reuse the same estimates.
The supporting derivations and the oracle and communication cost proofs are collected in Appendix~\ref{section:technical_proofs}.
\subsection{Proof Setup and Auxiliary Lemmas}
\label{subsection:proof_setup_auxiliary_lemmas}

For $X=[x_1,\ldots,x_n]^\top\in\mathbb{R}^{n\times d}$, the global exact table is
\[
    J(X)\coloneqq [J_1(X)^\top,\ldots,J_n(X)^\top]^\top\in\mathbb{R}^{M\times d}.
\]
The corresponding global estimated table is
\[
    \tilde J^k\coloneqq
    [(\tilde J_1^k)^\top,\ldots,(\tilde J_n^k)^\top]^\top
    \in\mathbb{R}^{M\times d}.
\]
Define the matrix of local average gradients by
\[
    \bar G(X)\coloneqq
    \begin{bmatrix}
        \nabla f_1(x_1)^\top\\
        \vdots\\
        \nabla f_n(x_n)^\top
    \end{bmatrix}
    \in\mathbb R^{n\times d}.
\]

We use the sampling seed $\xi^k$ and filtration $\mathcal F_k$ defined in Section~\ref{section:algorithm_design}.
In fixed-input estimates, $\xi$ denotes the sampling seed of a single iteration, and expectations are taken with all inputs deterministic.
For arbitrary $X=[x_1,\ldots,x_n]^\top\in\mathbb R^{n\times d}$ and $\tilde J=[\tilde J_1^\top,\ldots,\tilde J_n^\top]^\top \in\mathbb R^{M\times d}$, write $s_{i,j}\coloneqq[\tilde J_i]_{j,:}^\top$.
Given a realization $\xi=\{\mathcal S_i,\mathcal R_i\}_{i=1}^n$ and a radius $\beta>0$, define the finite-difference estimator mapping rowwise by
\begin{equation}\label{eq:finite_difference_estimator_mapping}
    [\widehat G_{\xi}^{\beta}(X,\tilde J)]_{i,:}^\top
    \coloneqq\frac1{m_i}\sum_{j=1}^{m_i}s_{i,j}
    +\frac d{R_i}\sum_{(j,u)\in\mathcal R_i}
      \left(\widehat\nabla_u f_{i,j}(x_i;\beta)-uu^\top s_{i,j}\right),
    \qquad i\in[n].
\end{equation}
Define the accompanying table-refresh mapping by its agent blocks:
\begin{equation}\label{eq:finite_difference_table_mapping}
    [\mathcal J_{\xi}^{\beta}(X,\tilde J)_i]_{j,:}^\top
    \coloneqq s_{i,j}
    +\sum_{u:(j,u)\in\mathcal S_i}
      \left(\widehat\nabla_u f_{i,j}(x_i;\beta)-uu^\top s_{i,j}\right),
    \quad i\in[n],\ j\in[m_i],
\end{equation}
where an empty sum is zero. Along the iterates of Algorithm~\ref{alg:zivr_distributed}, these mappings satisfy
\[
    G^k\coloneqq[g_1^k,\ldots,g_n^k]^\top
    =\widehat G_{\xi^k}^{\beta^k}(X^k,\tilde J^k),
    \qquad
    \tilde J^{k+1}=\mathcal{J}_{\xi^k}^{\beta^k}(X^k,\tilde J^k).
\]

Define $\Psi(X)\coloneqq\sum_{i=1}^n\psi(x_i)$, so
\[
    \partial\Psi(X)
    =\{S\in\mathbb R^{n\times d}:[S]_{i,:}^\top\in\partial\psi(x_i)
      \text{ for every }i\in[n]\}.
\]
In particular, $v\in\partial\psi(x)$ implies $\mathbf{1}v^\top\in\partial\Psi(\mathbf{1}x^\top)$.

Set $c\coloneqq4\gamma^2\rho_J/\sigma$, and let $\mathcal{I}_J$ denote the identity mapping on the Jacobian-table space.
For any symmetric positive-semidefinite weight $D$, we write $\|V\|_D^2\coloneqq\operatorname{tr}(V^\top DV)$. Define the block seminorm by
\begin{equation}\label{eq:M_norm_definition}
    \|U\|_{\mathcal M}^2
    \coloneqq\|Z\|_A^2+\|Y\|_F^2+c\|\tilde J\|_F^2
\end{equation}
for $U=[Z^\top,Y^\top,\tilde J^\top]^\top$, where $Z,Y\in\mathbb R^{n\times d}$ and $\tilde J\in\mathbb R^{M\times d}$.

\begin{lemma}[Operator representation of Algorithm~\ref{alg:zivr_distributed}]
\label{lem:operator_splitting}

Suppose $\psi$ is proper, lower semicontinuous, and convex. Let $A,C\in\mathbb R^{n\times n}$ satisfy $A+C^2=I_n$,
and consider Algorithm~\ref{alg:zivr_distributed} with communication update $\zeta^{k+1}=A\chi^k$, where $\zeta^k$ and $\chi^k$ stack the corresponding local vectors rowwise.
For every realization of the sampling, initialize the global state by
\[
    Z^1 \coloneqq X^0-\gamma\widehat G_{\xi^0}^{\beta^0}(X^0,\tilde J^0),
    \qquad
    Y^1\coloneqq CZ^1,
\]
and set
\begin{equation}\label{eq:direct_global_recursion}
    U^1 \coloneqq [(Z^1)^\top, (Y^1)^\top, (\tilde J^1)^\top]^\top, \qquad 
    U^{k+1}\coloneqq\mathbf{T}_{\xi^k}^{\beta^k}(U^k),\qquad k\ge 1,
\end{equation}
where $\beta^k>0$ is the finite-difference radius at iteration $k$, and
\begin{equation}\label{eq:operator_decomposition_direct}
    \mathbf{T}_{\xi^k}^{\beta^k} \coloneqq
    \mathbf{T}_C
    \mathbf{T}_{f, \xi^k}^{\beta^k}
    \mathbf{T}_{\Psi,A},
\end{equation}
where
\begin{equation*}
    \mathbf{T}_C \coloneqq
    \begin{bmatrix}
    I_{n} & -C & 0 \\
    C & A & 0 \\
    0 & 0 & I_{M}
    \end{bmatrix}, \quad
    \mathbf{T}_{\Psi,A} \coloneqq
    \begin{bmatrix}
    \operatorname{prox}_{\gamma \Psi}\circ A & 0 & 0 \\
    0 & I_{n} & 0 \\
    0 & 0 & I_M
    \end{bmatrix}.
\end{equation*}
For any $X, Y \in \mathbb{R}^{n \times d}$ and $\tilde J \in \mathbb{R}^{M \times d}$, define
\begin{equation*}
  \mathbf{T}_{f, \xi^k}^{\beta^k}
    \begin{bmatrix}X\\Y\\\tilde J\end{bmatrix}
    \coloneqq \begin{bmatrix} 
      X - \gamma \widehat G_{\xi^{k}}^{\beta^k}(X, \tilde J)\\
      Y\\
      \mathcal{J}_{\xi^{k}}^{\beta^k}(X, \tilde J)
      \end{bmatrix}.
\end{equation*}
Here $\mathbf{T}_C$ is the communication operator, $\mathbf{T}_{f, \xi^k}^{\beta^k}$ is the stochastic gradient estimator/Jacobian-refresh operator, and $\mathbf{T}_{\Psi,A}$ is the combined communication/proximal operator.

Then, for every $k\ge1$, the state has the block form
\[
    U^k = [(Z^k)^\top, (Y^k)^\top, (\tilde J^k)^\top]^\top,
\]
where $\tilde J^k$ agrees with the algorithmic estimated table and $Z^k,Y^k\in\mathbb R^{n\times d}$. The primal iterate is recovered by
\[
    X^k = \operatorname{prox}_{\gamma\Psi}(AZ^k).
\]
\end{lemma}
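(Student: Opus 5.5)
The plan is to prove the statement by induction on $k$. The invariant is that the operator state reproduces the algorithm's proximal inputs and table. Write $\zeta^k,\chi^k,X^k,G^k$ for the rowwise stacks produced by Algorithm~\ref{alg:zivr_distributed} (with $\zeta^{k+1}=A\chi^k$), and set $W^k\coloneqq X^k-\gamma G^k$. Then $\chi^0=W^0$, and $\chi^k=\zeta^k+W^k-W^{k-1}$ for $k\ge1$. Since $Z^1=X^0-\gamma\widehat G^{\beta^0}_{\xi^0}(X^0,\tilde J^0)=W^0$, the two nontrivial claims to maintain for every $k\ge1$ are
\[
  \chi^{k-1}=Z^k,\qquad \zeta^k=AZ^k,\qquad \tilde J^k_{\rm op}=\tilde J^k_{\rm alg}.
\]
The primal identity then follows at once: $X^k=\operatorname{prox}_{\gamma\Psi}(\zeta^k)=\operatorname{prox}_{\gamma\Psi}(AZ^k)$, because $\operatorname{prox}_{\gamma\Psi}$ acts rowwise as $\operatorname{prox}_{\gamma\psi}$ and is single-valued under the stated assumptions on $\psi$.

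First I will unroll one application of $\mathbf T^{\beta^k}_{\xi^k}=\mathbf T_C\mathbf T^{\beta^k}_{f,\xi^k}\mathbf T_{\Psi,A}$ to $U^k=[Z^k;Y^k;\tilde J^k]$. The operator $\mathbf T_{\Psi,A}$ produces $\operatorname{prox}_{\gamma\Psi}(AZ^k)$, which equals $X^k$ by the induction hypothesis. Next, $\mathbf T^{\beta^k}_{f,\xi^k}$ produces $X^k-\gamma\widehat G^{\beta^k}_{\xi^k}(X^k,\tilde J^k)$ together with $\mathcal J^{\beta^k}_{\xi^k}(X^k,\tilde J^k)$. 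The operator state and the algorithm use the same seed $\xi^k$, the same point $X^k$ and the same table, so this pair is exactly $(W^k,\tilde J^{k+1})$ by the rowwise identities $G^k=\widehat G^{\beta^k}_{\xi^k}(X^k,\tilde J^k)$ and $\tilde J^{k+1}=\mathcal J^{\beta^k}_{\xi^k}(X^k,\tilde J^k)$. This settles the table component. Finally, $\mathbf T_C$ gives
\[
  Z^{k+1}=W^k-CY^k,\qquad Y^{k+1}=CW^k+AY^k .
\]

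The core step is to show $\chi^k=W^k-CY^k$. Since $\chi^k=AZ^k+W^k-W^{k-1}$, this reduces to the identity $AZ^k-W^{k-1}=-CY^k$.
\begin{itemize}
  \item For $k=1$, use $Z^1=W^0$ and $Y^1=CW^0$. The identity becomes $(A-I_n)W^0=-C^2W^0$, which is exactly $A+C^2=I_n$.
  \item For $k\ge2$, substitute $Z^k=W^{k-1}-CY^{k-1}$ and $Y^k=CW^{k-1}+AY^{k-1}$. The left side becomes $-C^2W^{k-1}-ACY^{k-1}$ and the right side becomes $-C^2W^{k-1}-CAY^{k-1}$.
\end{itemize}
So the step needs $AC=CA$. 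This holds because $A=I_n-C^2$ is a polynomial in $C$; for the concrete choice \eqref{eq:communication_matrices} it is immediate anyway. It then follows that $\zeta^{k+1}=A\chi^k=AZ^{k+1}$, which closes the induction.

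I expect the main obstacle to be this bookkeeping of indices. The first iteration has the special form $\chi^0=x^0-\gamma g^0$ with no $\zeta^0$ term, while $U^1$ is defined from $X^0$. Because of this, the base case must be checked separately: verify $\zeta^1=A\chi^0=AZ^1$, the table identity $\tilde J^1=\mathcal J^{\beta^0}_{\xi^0}(X^0,\tilde J^0)$, and the relation at $k=1$ above. The argument holds for every realization of the sampling, since no expectations are taken anywhere.
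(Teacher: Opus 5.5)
Your proof is correct and follows essentially the same induction as the paper: the paper's key invariant $CY^k = X^{k-1}-\gamma G^{k-1}-AZ^k$ is exactly your identity $AZ^k - W^{k-1} = -CY^k$. The only difference is that the paper carries this identity in its inductive hypothesis and propagates it through $Y^{k+1}=Y^k+CZ^{k+1}$, which needs only $A+C^2=I_n$. You instead re-derive it from one application of $\mathbf T_C$, which needs $AC=CA$, and you correctly justify that from $A=I_n-C^2$.
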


The proof is given in Appendix~\ref{appendix:operator_fixed_point}.

\subsection{Contraction with Exact Directional Observations}
\label{subsection:exact_reference_contraction}
To isolate the finite-difference error, we introduce an auxiliary estimator that uses the exact directional projections $uu^\top\nabla f_{i,j}(x_i)$ with the same samples and stored table.
For a fixed input $X=[x_1,\ldots,x_n]^\top\in\mathbb R^{n\times d}$ and $\tilde J=[\tilde J_1^\top,\ldots,\tilde J_n^\top]^\top\in\mathbb R^{M\times d}$,
where $\tilde J_i\in\mathbb R^{m_i\times d}$, write $s_{i,j}=[\tilde J_i]_{j,:}^\top$ and define
\begin{align}
    [\widehat G_\xi^0(X,\tilde J)]_{i,:}^\top
    &\coloneqq\frac1{m_i}\sum_{j=1}^{m_i}s_{i,j}
      +\frac d{R_i}\sum_{(j,u)\in\mathcal R_i}
        uu^\top\bigl(\nabla f_{i,j}(x_i)-s_{i,j}\bigr),
      \label{eq:exact_reference_estimator}\\
    \mathcal J_\xi^0(X,\tilde J)
    &\coloneqq(\mathcal I_J-\Pi_\xi)\tilde J+\Pi_\xi J(X),
      \label{eq:exact_reference_table}
\end{align}
where $\Pi_\xi$ is the sampling projection in \eqref{eq:sampling_projection_definition} for the current seed $\xi$.
We suppress the iteration index in $\mathcal S_i$ and $\mathcal R_i=((j_{i,r},u_{i,r}))_{r=1}^{R_i}$; thus $r$ identifies an independent draw, including repeated component--direction pairs.
The superscript $0$ denotes exact directional observations, rather than evaluation of the finite-difference formula at $\beta=0$. The resulting reference operators are
\begin{equation}\label{eq:exact_reference_operator}
    \mathbf T_{f,\xi}^0
      \begin{bmatrix}X\\Y\\\tilde J\end{bmatrix}
    \coloneqq\begin{bmatrix}
      X-\gamma\widehat G_\xi^0(X,\tilde J)\\
      Y\\\mathcal J_\xi^0(X,\tilde J)
    \end{bmatrix},
    \qquad
    \mathbf T_\xi^0\coloneqq
      \mathbf T_C\mathbf T_{f,\xi}^0\mathbf T_{\Psi,A}.
\end{equation}
The operators $\mathbf T_{\xi}^{\beta}$ and $\mathbf T_\xi^0$ are always compared using the same seed and the same input. Only $\mathbf T_{\xi}^{\beta}$ is used by the algorithm.

\begin{lemma}[Pathwise fixed point of the reference operator]
\label{lem:reference_fixed_point}
Suppose the symmetric communication matrices satisfy
\begin{equation}\label{eq:analysis_communication_properties}
    0\preceq A\preceq I_n,\qquad B=I_n-A=C^2,\qquad
    C=B^{1/2},\qquad \operatorname{null}(B)=\operatorname{span}(\mathbf1).
\end{equation}
Let $x^*\in\argmin_x h(x)$, $X^*\coloneqq\mathbf1(x^*)^\top$, and $J^*\coloneqq J(X^*)$. There exist $Z^*$ and $Y^*\in\operatorname{range}(C)$ such that
\begin{equation}\label{eq:reference_fixed_point}
    U^*\coloneqq\begin{bmatrix}Z^*\\Y^*\\J^*\end{bmatrix},
    \qquad
    \operatorname{prox}_{\gamma\Psi}(AZ^*)=X^*,
    \qquad \mathbf T_\xi^0U^*=U^*
\end{equation}
for every realization of $\xi$.
\end{lemma}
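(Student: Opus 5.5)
The plan is to construct $U^*$ explicitly and check $\mathbf T_\xi^0U^*=U^*$ by pushing $U^*$ through the three factors $\mathbf T_{\Psi,A}$, $\mathbf T_{f,\xi}^0$, $\mathbf T_C$ in turn. Write $\bar G^*\coloneqq\bar G(X^*)$, whose $i$-th row is $\nabla f_i(x^*)^\top$. I take $Z^*$ consensual,
\[
Z^*\coloneqq\mathbf1\bigl(x^*-\gamma\nabla f(x^*)\bigr)^\top .
\]
I choose $Y^*\in\operatorname{range}(C)$ so that
\[
CY^*=-\gamma\bigl(\bar G^*-\mathbf1\nabla f(x^*)^\top\bigr).
\]

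The first step is existence of $Y^*$. Since $C=B^{1/2}$ is symmetric with $\operatorname{null}(C)=\operatorname{null}(B)=\operatorname{span}(\mathbf1)$, we have $\operatorname{range}(C)=\mathbf1^\perp$. Each column of $\bar G^*-\mathbf1\nabla f(x^*)^\top$ is orthogonal to $\mathbf1$, because $\frac1n\sum_i\nabla f_i(x^*)=\nabla f(x^*)$. Hence $Y^*\coloneqq -\gamma C^{\dagger}(\bar G^*-\mathbf1\nabla f(x^*)^\top)$ lies in $\operatorname{range}(C)$ and solves the equation above.

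The second step checks the prox condition. From $A\mathbf1=(I-B)\mathbf1=\mathbf1$ we get $AZ^*=Z^*$, so
\[
\operatorname{prox}_{\gamma\Psi}(AZ^*)=\mathbf1\,\operatorname{prox}_{\gamma\psi}\bigl(x^*-\gamma\nabla f(x^*)\bigr)^\top .
\]
The optimality condition $-\nabla f(x^*)\in\partial\psi(x^*)$ is equivalent to $x^*=\operatorname{prox}_{\gamma\psi}(x^*-\gamma\nabla f(x^*))$. This gives $\operatorname{prox}_{\gamma\Psi}(AZ^*)=X^*$, and it is also the first-coordinate output of $\mathbf T_{\Psi,A}U^*$.

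The third step applies $\mathbf T_{f,\xi}^0$ at $(X^*,Y^*,J^*)$; this is where pathwise validity for every $\xi$ comes from. Since $s_{i,j}=\nabla f_{i,j}(x^*)$, every correction term $uu^\top(\nabla f_{i,j}(x^*)-s_{i,j})$ vanishes. So $\widehat G_\xi^0(X^*,J^*)=\bar G^*$ for every realization of $\mathcal R_i$. Likewise $\mathcal J_\xi^0(X^*,J^*)=(\mathcal I_J-\Pi_\xi)J^*+\Pi_\xi J^*=J^*$ for every $\mathcal S_i$.

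The last step applies $\mathbf T_C$ to $(P,Y^*,J^*)$ with $P\coloneqq X^*-\gamma\bar G^*$. By construction
\[
P-CY^*=X^*-\gamma\mathbf1\nabla f(x^*)^\top=Z^*,
\]
so the first block returns $Z^*$. The second block is $CP+AY^*=C(Z^*+CY^*)+(I-C^2)Y^*=CZ^*+Y^*=Y^*$, since $CZ^*=0$ because $Z^*$ is consensual. The table block is unchanged.

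No step is a real obstacle. The only point needing care is the range argument for $Y^*$: it uses $\operatorname{range}(C)=\operatorname{null}(C)^\perp=\mathbf1^\perp$ and the heterogeneity matrix having zero column sums.
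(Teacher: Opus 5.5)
Your proposal is correct and is essentially the paper's proof. The paper also takes the consensual $Z^*=X^*+\gamma S^*$ with $S^*=-\mathbf 1\nabla f(x^*)^\top\in\partial\Psi(X^*)$, chooses the unique $Y^*\in\operatorname{range}(C)$ with $CY^*=-\gamma(\bar G(X^*)+S^*)$, observes that all correction terms vanish at $(X^*,J^*)$ for every $\xi$, and checks the two blocks of $\mathbf T_C$ exactly as you do. The one cosmetic difference is the prox step: the paper writes it as $AZ^*-X^*=\gamma S^*\in\gamma\partial\Psi(X^*)$, whereas you use the fixed-point identity $x^*=\operatorname{prox}_{\gamma\psi}(x^*-\gamma\nabla f(x^*))$.
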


The proof is given in Appendix~\ref{appendix:operator_fixed_point}.

In the following four lemmas, the two inputs are arbitrary fixed matrices
\[
    X^{(a)}=[x_1^{(a)},\ldots,x_n^{(a)}]^\top\in\mathbb R^{n\times d},
    \qquad
    \tilde J^{(a)}=[(\tilde J_1^{(a)})^\top,\ldots,
      (\tilde J_n^{(a)})^\top]^\top\in\mathbb R^{M\times d},
    \quad a=1,2,
\]
where $x_i^{(a)}\in\mathbb R^d$ and $\tilde J_i^{(a)}\in\mathbb R^{m_i\times d}$.
Both updates use the same random seed $\xi$ satisfying Assumption~\ref{assump:sampling_operator}; the expectations below hold these deterministic inputs fixed.

\begin{lemma}[Exact estimator difference]\label{lem:zivr_estimator_diff}
For $a=1,2$, let $g^{(a)}=\widehat G_\xi^0(X^{(a)},\tilde J^{(a)})$. Then
\begin{equation}\label{eq:exact_estimator_unbiasedness}
    \mathbb E g^{(a)}=\bar G(X^{(a)}),
\end{equation}
and
\begin{equation}\label{eq:zivr_estimator_diff_exact_table}
    \mathbb E\|g^{(1)}-g^{(2)}\|_F^2
    \le\rho_0\|J(X^{(1)})-J(X^{(2)})\|_F^2
      +\rho_J\|\tilde J^{(1)}-\tilde J^{(2)}\|_F^2.
\end{equation}
\end{lemma}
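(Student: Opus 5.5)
We argue row by row, since the rows of $\widehat G_\xi^0$ depend only on agent $i$'s inputs and on $\mathcal R_i$. Fix $i$ and, for $a=1,2$, write $s_{i,j}^{(a)}=[\tilde J_i^{(a)}]_{j,:}^\top$ and $\nabla_{ij}^{(a)}=\nabla f_{i,j}(x_i^{(a)})$. Only the batch $\mathcal R_i$ enters \eqref{eq:exact_reference_estimator}; the set $\mathcal S_i$ plays no role here.

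\emph{Unbiasedness.} Each pair $(j,u)$ has $j$ uniform on $[m_i]$ and $u$ independent and uniform on $\mathbb S_d$, so $\mathbb E[uu^\top]=I_d/d$. Hence a single draw satisfies
\[
\mathbb E\bigl[d\,uu^\top(\nabla_{ij}^{(a)}-s_{i,j}^{(a)})\bigr]=\frac1{m_i}\sum_{j'=1}^{m_i}\bigl(\nabla_{ij'}^{(a)}-s_{i,j'}^{(a)}\bigr).
\]
Averaging over the $R_i$ draws leaves this unchanged. Adding the stored average cancels the $s$-terms and gives $\nabla f_i(x_i^{(a)})$, which is \eqref{eq:exact_estimator_unbiasedness}.

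\emph{Difference bound.} Set $\Delta_j\coloneqq\nabla_{ij}^{(1)}-\nabla_{ij}^{(2)}$, $E_j\coloneqq s_{i,j}^{(1)}-s_{i,j}^{(2)}$ and $v_j\coloneqq\Delta_j-E_j$. The row difference is then
\[
\frac1{m_i}\sum_j E_j+\frac d{R_i}\sum_{r}u_ru_r^\top v_{j_r},
\]
with $(j_r,u_r)$ the $r$-th draw. Its mean is $\bar\Delta\coloneqq\frac1{m_i}\sum_j\Delta_j$. By the bias--variance identity and independence of the draws,
\[
\mathbb E\|\text{row diff}\|^2=\|\bar\Delta\|^2+\frac1{R_i}\operatorname{Var}\bigl(d\,uu^\top v_j\bigr)\le\|\bar\Delta\|^2+\frac{d^2}{R_i}\mathbb E\|uu^\top v_j\|^2.
\]
Since $\|uu^\top v\|^2=(u^\top v)^2$ and $\mathbb E(u^\top v)^2=\|v\|^2/d$, the last term equals $\frac{d}{R_im_i}\sum_j\|v_j\|^2$. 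Applying $\|v_j\|^2\le2\|\Delta_j\|^2+2\|E_j\|^2$ and Jensen's inequality $\|\bar\Delta\|^2\le\frac1{m_i}\sum_j\|\Delta_j\|^2$, we obtain
\[
\mathbb E\|\text{row diff}\|^2\le\Bigl(\frac1{m_i}+\frac{2d}{m_iR_i}\Bigr)\sum_j\|\Delta_j\|^2+\frac{2d}{m_iR_i}\sum_j\|E_j\|^2.
\]
Summing over $i$ and bounding each coefficient by its maximum over $i$ gives $\rho_0$ and $\rho_J$, and the sums become $\|J(X^{(1)})-J(X^{(2)})\|_F^2$ and $\|\tilde J^{(1)}-\tilde J^{(2)}\|_F^2$. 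This is \eqref{eq:zivr_estimator_diff_exact_table}.

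The only delicate step is the second-moment computation. Dropping the subtracted mean in the variance avoids any cross-term issues and is exactly what the constants require. The factor $2$ from splitting $v_j$ accounts for the $2d$ appearing in $\rho_0$ and $\rho_J$.
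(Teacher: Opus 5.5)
Your proof is correct and follows essentially the same route as the paper's. The paper likewise writes the row difference as its mean plus an i.i.d. batch average, applies the batch second-moment identity (your bias--variance step), drops the subtracted $\|\bar v_i\|^2$, and splits $v_{i,j}$ with the factor $2$. It then applies Jensen to the mean term and sums over agents using the maxima defining $\rho_0$ and $\rho_J$.
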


\begin{lemma}[Exact table-update difference]\label{lem:jacobian_update_diff}
For two fixed inputs, let $(\tilde J^{(a)})^+=\mathcal J_\xi^0(X^{(a)},\tilde J^{(a)})$, $a=1,2$, with the same seed $\xi$. Then
\begin{equation}\label{eq:jacobian_update_diff_exact_table}
\begin{aligned}
    \mathbb E\|(\tilde J^{(1)})^+-(\tilde J^{(2)})^+\|_F^2
    &=(1-\sigma)\|\tilde J^{(1)}-\tilde J^{(2)}\|_F^2\\
    &\quad+\sigma\|J(X^{(1)})-J(X^{(2)})\|_F^2.
\end{aligned}
\end{equation}
\end{lemma}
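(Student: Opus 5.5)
The update is linear in the differences, so I would work with $\Delta\coloneqq\tilde J^{(1)}-\tilde J^{(2)}$ and $E\coloneqq J(X^{(1)})-J(X^{(2)})$. By \eqref{eq:exact_reference_table},
\[
    (\tilde J^{(1)})^+-(\tilde J^{(2)})^+
    =(\mathcal I_J-\Pi_\xi)\Delta+\Pi_\xi E .
\]
The plan is to expand the squared Frobenius norm of this expression and to use two facts: that $\Pi_\xi$ is an orthogonal projection for every realization, and the mean identity \eqref{eq:expected_sampling_projection}.

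First I would establish the pathwise geometry. By Assumption~\ref{assump:sampling_operator}, for each agent block and row $j$, the directions $\{u:(j,u)\in\mathcal S_i\}$ are orthonormal. Hence $P_{i,j}\coloneqq\sum_{u}uu^\top$ is a symmetric idempotent $d\times d$ matrix. Since $\Pi_\xi$ acts rowwise by $P_{i,j}$, it is a self-adjoint idempotent on $(\mathbb R^{M\times d},\langle\cdot,\cdot\rangle_F)$, as already noted after Assumption~\ref{assump:sampling_operator}. The same then holds for $\mathcal I_J-\Pi_\xi$, and the ranges of $\mathcal I_J-\Pi_\xi$ and $\Pi_\xi$ are orthogonal. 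The cross term therefore vanishes identically:
\[
    \langle(\mathcal I_J-\Pi_\xi)\Delta,\Pi_\xi E\rangle_F
    =\langle\Delta,(\mathcal I_J-\Pi_\xi)\Pi_\xi E\rangle_F=0 .
\]
This gives, pathwise,
\[
    \|(\tilde J^{(1)})^+-(\tilde J^{(2)})^+\|_F^2
    =\|\Delta\|_F^2-\langle\Pi_\xi\Delta,\Delta\rangle_F+\langle\Pi_\xi E,E\rangle_F,
\]
where I used $\|(\mathcal I_J-\Pi_\xi)\Delta\|_F^2=\|\Delta\|_F^2-\|\Pi_\xi\Delta\|_F^2$ and $\|\Pi_\xi H\|_F^2=\langle\Pi_\xi H,H\rangle_F$.

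Next I would take expectations. The inputs are deterministic, so $\Delta$ and $E$ are fixed tables, and linearity of the Frobenius inner product gives $\mathbb E\langle\Pi_\xi H,H\rangle_F=\langle\mathbb E[\Pi_\xi H],H\rangle_F=\sigma\|H\|_F^2$ by \eqref{eq:expected_sampling_projection}. Applying this to $H=\Delta$ and $H=E$ yields exactly $(1-\sigma)\|\Delta\|_F^2+\sigma\|E\|_F^2$, which is the claimed identity.

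The only delicate point is the orthogonality of the cross term. It relies on the orthonormality of the sampled directions within each row, and on the fact that $\Pi_\xi$ acts row by row, so that distinct rows never interact. Integrability is trivial here because the inputs are fixed and the projection has norm at most one. If the seed $\xi$ is taken at iteration $k$, the expectation is the one with respect to $\mathcal S_i^k$, which is independent of $\mathcal F_k$, so \eqref{eq:expected_sampling_projection} applies directly.
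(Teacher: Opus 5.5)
Your proposal is correct and follows essentially the same route as the paper. Both arguments use the decomposition $(\mathcal I_J-\Pi_\xi)\Delta+\Pi_\xi E$ and show that the cross term vanishes pathwise because $\Pi_\xi$ is self-adjoint and idempotent; both then take expectations using $\mathbb E\Pi_\xi=\sigma\mathcal I_J$, which follows from \eqref{eq:expected_sampling_projection}.
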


\begin{lemma}[Combined monotonicity and cocoercivity bound]
\label{lem:gradient_monotonicity_bound}
For two matrices $X^{(1)},X^{(2)}\in\mathbb{R}^{n\times d}$, define
\[
    \Delta X=X^{(1)}-X^{(2)},\qquad
    \Delta J=J(X^{(1)})-J(X^{(2)}),
    \qquad
    \Delta\bar G=\bar G(X^{(1)})-\bar G(X^{(2)}).
\]
Then
\begin{equation}\label{eq:gradient_monotonicity_bound}
    \left\langle \Delta X,\Delta\bar G\right\rangle_F
    \ge
    \frac{\mu}{2}\|\Delta X\|_F^2
    +\frac{1}{2m_{\max}L}\|\Delta J\|_F^2 .
\end{equation}
\end{lemma}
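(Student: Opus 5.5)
The inequality decouples across agents, since both sides are sums over $i\in[n]$:
\[
\langle \Delta X,\Delta\bar G\rangle_F=\sum_{i=1}^n\langle x_i^{(1)}-x_i^{(2)},\nabla f_i(x_i^{(1)})-\nabla f_i(x_i^{(2)})\rangle,\qquad \|\Delta J\|_F^2=\sum_{i=1}^n\sum_{j=1}^{m_i}\|\nabla f_{i,j}(x_i^{(1)})-\nabla f_{i,j}(x_i^{(2)})\|^2 .
\]
It therefore suffices to fix an agent $i$, write $x=x_i^{(1)}$ and $y=x_i^{(2)}$, and prove
\[
\langle x-y,\nabla f_i(x)-\nabla f_i(y)\rangle\ge\frac{\mu}{2}\|x-y\|^2+\frac{1}{2m_{\max}L}\sum_{j=1}^{m_i}\|\nabla f_{i,j}(x)-\nabla f_{i,j}(y)\|^2 .
\]
The plan is to split the left-hand side into two equal halves and bound each half with a different property: strong convexity of $f_i$ for one half, and componentwise cocoercivity for the other.

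For the first half, $\mu$-strong convexity of $f_i$ (Assumption~\ref{assump:basic_problem}) gives, after adding the two strong-convexity inequalities at $x$ and $y$, the bound $\langle x-y,\nabla f_i(x)-\nabla f_i(y)\rangle\ge\mu\|x-y\|^2$. Half of it is $\frac{\mu}{2}\|x-y\|^2$.

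For the second half, each $f_{i,j}$ is convex and $L$-smooth, so the Baillon--Haddad / co-coercivity inequality holds:
\[
\langle x-y,\nabla f_{i,j}(x)-\nabla f_{i,j}(y)\rangle\ge\frac1L\|\nabla f_{i,j}(x)-\nabla f_{i,j}(y)\|^2 .
\]
Averaging over $j$ with weight $1/m_i$ yields
\[
\langle x-y,\nabla f_i(x)-\nabla f_i(y)\rangle\ge\frac{1}{m_iL}\sum_{j}\|\nabla f_{i,j}(x)-\nabla f_{i,j}(y)\|^2\ge\frac{1}{m_{\max}L}\sum_{j}\|\nabla f_{i,j}(x)-\nabla f_{i,j}(y)\|^2 .
\]
Half of this gives the second term.

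Adding the two halves gives the per-agent inequality, and summing over $i$ gives \eqref{eq:gradient_monotonicity_bound}. No step here is a genuine obstacle. The only point requiring care is using convexity of the individual components, not just of $f_i$, to get the per-component cocoercivity, and then replacing $m_i$ by $m_{\max}$ so that the constant is uniform across agents.
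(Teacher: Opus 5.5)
Your proposal is correct and matches the paper's proof: average the strong-monotonicity bound from $\mu$-strong convexity of each $f_i$ with the componentwise cocoercivity bound (convexity plus $L$-smoothness of each $f_{i,j}$), then replace $m_i$ by $m_{\max}$. The only difference is bookkeeping: you argue per agent and then sum over $i$, whereas the paper sums over agents first and applies the two bounds to the global Frobenius inner product.
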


Proofs of Lemmas~\ref{lem:zivr_estimator_diff}--\ref{lem:gradient_monotonicity_bound} are given in Appendix~\ref{appendix:sampling_gradient_estimates}.

\begin{lemma}[Contraction of the exact gradient/table update]
\label{lem:global_contraction_Tf}
Choose $\gamma$ and $\tau$ as in Theorem~\ref{thm:zivr_convergence_rate}, retain $c=4\gamma^2\rho_J/\sigma$, and set $\theta=1-\tau$. For $a=1,2$, let
\[
    (X^{(a)})^+=X^{(a)}-\gamma\widehat G_\xi^0(X^{(a)},\tilde J^{(a)}),
    \qquad
    (\tilde J^{(a)})^+=\mathcal J_\xi^0(X^{(a)},\tilde J^{(a)}),
\]
using the same seed at both fixed inputs. Then
\begin{equation}\label{eq:global_contraction}
\begin{aligned}
    &\mathbb E\left[
      \|(X^{(1)})^+-(X^{(2)})^+\|_F^2
      +c\|(\tilde J^{(1)})^+-(\tilde J^{(2)})^+\|_F^2\right]\\
    &\quad\le\theta\left(
      \|X^{(1)}-X^{(2)}\|_F^2
      +c\|\tilde J^{(1)}-\tilde J^{(2)}\|_F^2\right).
\end{aligned}
\end{equation}
\end{lemma}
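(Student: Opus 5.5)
Expand the squared distance of the gradient step and bound each piece with Lemmas~\ref{lem:zivr_estimator_diff}--\ref{lem:gradient_monotonicity_bound}. Write $\Delta X=X^{(1)}-X^{(2)}$, $\Delta\tilde J=\tilde J^{(1)}-\tilde J^{(2)}$, $\Delta J=J(X^{(1)})-J(X^{(2)})$, $\Delta g=g^{(1)}-g^{(2)}$, where $g^{(a)}=\widehat G_\xi^0(X^{(a)},\tilde J^{(a)})$. Then
\[
\|(X^{(1)})^+-(X^{(2)})^+\|_F^2=\|\Delta X\|_F^2-2\gamma\langle\Delta X,\Delta g\rangle_F+\gamma^2\|\Delta g\|_F^2 .
\]
The inputs are deterministic, so the unbiasedness \eqref{eq:exact_estimator_unbiasedness} gives $\mathbb E\langle\Delta X,\Delta g\rangle_F=\langle\Delta X,\Delta\bar G\rangle_F$. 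Lemma~\ref{lem:gradient_monotonicity_bound} then bounds the cross term by $-\gamma\mu\|\Delta X\|_F^2-\frac{\gamma}{m_{\max}L}\|\Delta J\|_F^2$. For the quadratic term, \eqref{eq:zivr_estimator_diff_exact_table} gives $\gamma^2\rho_0\|\Delta J\|_F^2+\gamma^2\rho_J\|\Delta\tilde J\|_F^2$. For the table, Lemma~\ref{lem:jacobian_update_diff} gives the exact identity $c(1-\sigma)\|\Delta\tilde J\|_F^2+c\sigma\|\Delta J\|_F^2$.

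Adding these, the left side of \eqref{eq:global_contraction} is at most
\[
(1-\gamma\mu)\|\Delta X\|_F^2+\Bigl(c(1-\sigma)+\gamma^2\rho_J\Bigr)\|\Delta\tilde J\|_F^2+\Bigl(\gamma^2\rho_0+c\sigma-\tfrac{\gamma}{m_{\max}L}\Bigr)\|\Delta J\|_F^2 .
\]
The three coefficients are handled separately.

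\textbf{$\Delta J$ coefficient.} Since $c\sigma=4\gamma^2\rho_J$, this coefficient equals $\gamma\bigl(\gamma(\rho_0+4\rho_J)-1/(m_{\max}L)\bigr)$. It is $\le0$ exactly under the stepsize condition $\gamma\le 1/(m_{\max}L(\rho_0+4\rho_J))$, so the term can be dropped. This cancellation is the main point of the proof: the constant $c$ and the factor $4$ in the stepsize bound are chosen so that the $\Delta J$ terms produced by the estimator variance and by the table refresh are absorbed by the cocoercivity term.

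\textbf{$\Delta X$ coefficient.} We have $1-\gamma\mu\le1-\gamma\mu/2\le1-\tau=\theta$.

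\textbf{$\Delta\tilde J$ coefficient.} Since $\gamma^2\rho_J=c\sigma/4$, it equals $c(1-\sigma+\sigma/4)=c(1-3\sigma/4)\le c(1-\sigma/4)\le c\,\theta$, using $\tau\le\sigma/4$.

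Combining the three bounds gives \eqref{eq:global_contraction}. The only delicate step is the $\Delta J$ cancellation. Lemma~\ref{lem:gradient_monotonicity_bound} must be used in its combined form, with both the $\mu/2$ and the $1/(2m_{\max}L)$ parts; the factor $2\gamma$ in the cross term turns these into exactly $\gamma\mu$ and $\gamma/(m_{\max}L)$. If the constant were short by a factor of two, I would split off a fraction of the strong-convexity term instead, which still leaves $\gamma\mu/2\ge\tau$.
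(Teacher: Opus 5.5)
Your proposal is correct and follows essentially the same route as the paper's proof. Like the paper, you expand the square, use unbiasedness together with the combined monotonicity/cocoercivity bound and the estimator-difference bound, add $c$ times the table identity, and then absorb the $\Delta J$ coefficient via $c\sigma=4\gamma^2\rho_J$ and the stepsize condition, before bounding the $\Delta X$ and $\Delta\tilde J$ coefficients by $\theta$ using $\tau\le\gamma\mu/2$ and $\tau\le\sigma/4$.
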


\begin{proof}
Write $\Delta X=X^{(1)}-X^{(2)}$, $\Delta J=J(X^{(1)})-J(X^{(2)})$, $\Delta\tilde J=\tilde J^{(1)}-\tilde J^{(2)}$, and $\Delta\bar G=\bar G(X^{(1)})-\bar G(X^{(2)})$.
Let $\Delta g=\widehat G_\xi^0(X^{(1)},\tilde J^{(1)}) -\widehat G_\xi^0(X^{(2)},\tilde J^{(2)})$. Because $\Delta X$ is fixed, unbiasedness in \eqref{eq:exact_estimator_unbiasedness} gives
\[
    \mathbb E\langle\Delta X,\Delta g\rangle_F
    =\langle\Delta X,\mathbb E\Delta g\rangle_F
    =\langle\Delta X,\Delta\bar G\rangle_F.
\]
Expanding the square therefore yields
\begin{align}
    \mathbb E\|\Delta X-\gamma\Delta g\|_F^2
    &=\|\Delta X\|_F^2-2\gamma\langle\Delta X,\Delta\bar G\rangle_F
      +\gamma^2\mathbb E\|\Delta g\|_F^2\notag\\
    &\le(1-\gamma\mu)\|\Delta X\|_F^2
      +\left(\gamma^2\rho_0-\frac\gamma{m_{\max}L}\right)\|\Delta J\|_F^2
      +\gamma^2\rho_J\|\Delta\tilde J\|_F^2.
      \label{eq:primal_contraction_rigorous}
\end{align}
The inequality uses the gradient bound \eqref{eq:gradient_monotonicity_bound} and the estimator bound \eqref{eq:zivr_estimator_diff_exact_table}.
Adding $c$ times the exact table identity \eqref{eq:jacobian_update_diff_exact_table} yields
\begin{align*}
    &\mathbb E\left[
      \|(X^{(1)})^+-(X^{(2)})^+\|_F^2
      +c\|(\tilde J^{(1)})^+-(\tilde J^{(2)})^+\|_F^2\right]\\
    &\quad\le(1-\gamma\mu)\|\Delta X\|_F^2
      +\bigl(\gamma^2\rho_J+c(1-\sigma)\bigr)\|\Delta\tilde J\|_F^2\\
    &\qquad\quad+
      \left(\gamma^2\rho_0+c\sigma-\frac\gamma{m_{\max}L}\right)\|\Delta J\|_F^2.
\end{align*}
The prescribed $c=4\gamma^2\rho_J/\sigma$ and stepsize imply
\begin{align*}
    \gamma^2\rho_J+c(1-\sigma)&=c\left(1-\frac{3\sigma}{4}\right),\\
    \gamma^2\rho_0+c\sigma-\frac\gamma{m_{\max}L}
    &=\gamma\left(\gamma(\rho_0+4\rho_J)-\frac1{m_{\max}L}\right)\le0.
\end{align*}
Thus the coefficient of $\|\Delta J\|_F^2$ is nonpositive. Moreover,
\[
    1-\gamma\mu\le1-2\tau\le1-\tau,\qquad
    1-\frac{3\sigma}{4}\le1-3\tau\le1-\tau,
\]
by $\tau\le\gamma\mu/2$ and $\tau\le\sigma/4$. Substitution gives
\begin{align*}
    &\mathbb E\left[
      \|(X^{(1)})^+-(X^{(2)})^+\|_F^2
      +c\|(\tilde J^{(1)})^+-(\tilde J^{(2)})^+\|_F^2\right]\\
    &\quad\le(1-\gamma\mu)\|\Delta X\|_F^2
      +c\left(1-\frac{3\sigma}{4}\right)\|\Delta\tilde J\|_F^2\\
    &\quad\le(1-\tau)\left(\|\Delta X\|_F^2+c\|\Delta\tilde J\|_F^2\right).
\end{align*}
\end{proof}

\begin{lemma}[Contraction of the reference operator]
\label{lem:reference_contraction}
Suppose the symmetric communication matrices satisfy \eqref{eq:analysis_communication_properties}.
Use the parameters of Lemma~\ref{lem:global_contraction_Tf} and set $\delta=\max\{\theta,1-\lambda_2(B)\}$. For any two fixed states $U^{(a)}=[(Z^{(a)})^\top,(Y^{(a)})^\top,(\tilde J^{(a)})^\top]^\top$,
$a=1,2$, where $Z^{(a)},Y^{(a)}\in\mathbb R^{n\times d}$ and $\tilde J^{(a)}\in\mathbb R^{M\times d}$, assume $Y^{(1)}-Y^{(2)}\in\operatorname{range}(C)$ columnwise. Then
\begin{equation}\label{eq:one_step_operator_rate}
    \mathbb E\|\mathbf T_\xi^0U^{(1)}-\mathbf T_\xi^0U^{(2)}\|_{\mathcal M}^2
    \le\delta\|U^{(1)}-U^{(2)}\|_{\mathcal M}^2.
\end{equation}
\end{lemma}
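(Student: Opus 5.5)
Write $\mathbf T_\xi^0=\mathbf T_C\mathbf T_{f,\xi}^0\mathbf T_{\Psi,A}$ and track the $\mathcal M$-seminorm through the three factors in reverse order. Set
\[
X^{(a)}=\operatorname{prox}_{\gamma\Psi}(AZ^{(a)}),\qquad
V^{(a)}=X^{(a)}-\gamma\widehat G_\xi^0(X^{(a)},\tilde J^{(a)}),\qquad
(\tilde J^{(a)})^+=\mathcal J_\xi^0(X^{(a)},\tilde J^{(a)}).
\]
Then $\mathbf T_\xi^0U^{(a)}=[(V^{(a)}-CY^{(a)})^\top,(CV^{(a)}+AY^{(a)})^\top,((\tilde J^{(a)})^+)^\top]^\top$. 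Write $\Delta V$, $\Delta Y$, $\Delta Z$, $\Delta X$ for the differences.

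\emph{Step 1 (communication operator).} Compute
\[
\|\Delta V-C\Delta Y\|_A^2+\|C\Delta V+A\Delta Y\|_F^2
\]
exactly, using that $A$, $C$ commute and $A+C^2=I$. Expanding gives
\[
\Delta V^\top A\Delta V+\Delta Y^\top(CAC+A^2)\Delta Y+\Delta V^\top C\Delta V\text{-type terms}.
\]
The cross terms $-2\langle \Delta V,AC\Delta Y\rangle+2\langle C\Delta V,A\Delta Y\rangle$ cancel. The diagonal terms give $\|\Delta V\|_{A+C^2}^2=\|\Delta V\|_F^2$ and $\|\Delta Y\|_{A(C^2+A)}^2=\|\Delta Y\|_A^2$. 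So
\[
\|\mathbf T_C[\Delta V;\Delta Y]\|^2=\|\Delta V\|_F^2+\|\Delta Y\|_A^2 .
\]
Since $\Delta Y\in\operatorname{range}(C)=\operatorname{span}(\mathbf1)^\perp$, we get $\|\Delta Y\|_A^2=\|\Delta Y\|_F^2-\|\Delta Y\|_B^2\le(1-\lambda_2(B))\|\Delta Y\|_F^2$. The table component is untouched by $\mathbf T_C$.

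\emph{Step 2 (gradient/table step).} Apply Lemma~\ref{lem:global_contraction_Tf}, with expectation over $\xi$ and the inputs held fixed:
\[
\mathbb E\bigl[\|\Delta V\|_F^2+c\|\Delta\tilde J^+\|_F^2\bigr]\le\theta\bigl(\|\Delta X\|_F^2+c\|\Delta\tilde J\|_F^2\bigr).
\]

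\emph{Step 3 (prox step).} Use nonexpansiveness of $\operatorname{prox}_{\gamma\Psi}$, which holds rowwise, and $0\preceq A\preceq I$:
\[
\|\Delta X\|_F^2\le\|A\Delta Z\|_F^2=\operatorname{tr}(\Delta Z^\top A^2\Delta Z)\le\|\Delta Z\|_A^2 .
\]

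Combining the three steps gives
\[
\mathbb E\|\cdot\|_{\mathcal M}^2\le\theta\bigl(\|\Delta Z\|_A^2+c\|\Delta\tilde J\|_F^2\bigr)+(1-\lambda_2(B))\|\Delta Y\|_F^2\le\delta\|U^{(1)}-U^{(2)}\|_{\mathcal M}^2 .
\]

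The main obstacle is the exact algebraic identity in Step~1. The cross-term cancellation depends on $A$ and $C$ commuting, which holds because $C=B^{1/2}$ is a function of $A$. The step $A^2+CAC=A(A+C^2)=A$ must also be checked carefully. A secondary point is that the hypothesis $\Delta Y\in\operatorname{range}(C)$ columnwise is what makes the $A$-weighted seminorm on $Y$ contract by $1-\lambda_2(B)$. The expectation enters only through Step~2, because Steps~1 and~3 are deterministic in the seed.
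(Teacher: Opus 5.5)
Your proposal is correct and follows the paper's proof essentially step for step: the energy identity $\|\mathbf T_C[Z;Y;H]\|_{\mathcal M}^2=\|Z\|_F^2+\|Y\|_A^2+c\|H\|_F^2$ (from $AC=CA$ and $A+C^2=I_n$), the bound $\|\Delta Y\|_A^2\le(1-\lambda_2(B))\|\Delta Y\|_F^2$ on $\mathbf 1^\perp$, Lemma~\ref{lem:global_contraction_Tf}, and prox nonexpansiveness with $A^2\preceq A$. The only point you leave implicit is that multiplying $\|\Delta X\|_F^2\le\|\Delta Z\|_A^2$ by $\theta$ requires $\theta\ge0$, which holds because $\tau\le\sigma/4\le1/4$.
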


\begin{proof}
We use the operator-chain argument for the composite case in Xu et al.~\cite{xu2021distributed}.
Since $A=I_n-C^2$, the matrices $A$ and $C$ commute. For $Z,Y\in\mathbb R^{n\times d}$ and $H\in\mathbb R^{M\times d}$, we have
\[
    -2\langle Z,ACY\rangle_F+2\langle CZ,AY\rangle_F
    =2\langle Z,(CA-AC)Y\rangle_F=0.
\]
Therefore
\begin{align}
    \left\|\mathbf T_C\begin{bmatrix}Z\\Y\\H\end{bmatrix}\right\|_{\mathcal M}^2
    &=\|Z-CY\|_A^2+\|CZ+AY\|_F^2+c\|H\|_F^2\notag\\
    &=\|Z\|_{A+C^2}^2+\|Y\|_{CAC+A^2}^2+c\|H\|_F^2\notag\\
    &=\|Z\|_F^2+\|Y\|_A^2+c\|H\|_F^2.
      \label{eq:TC_energy_identity}
\end{align}
The last equality uses $A+C^2=I_n$ and $CAC+A^2=A(C^2+A)=A$.

Set $X^{(a)}=\operatorname{prox}_{\gamma\Psi}(AZ^{(a)})$ for $a=1,2$, and define
\begin{gather*}
    \Delta Z=Z^{(1)}-Z^{(2)},\quad
    \Delta Y=Y^{(1)}-Y^{(2)},\quad
    \Delta X=X^{(1)}-X^{(2)},\quad
    \Delta\tilde J=\tilde J^{(1)}-\tilde J^{(2)},\\
    \Delta X_+^0\coloneqq\Delta X-\gamma\bigl(
      \widehat G_\xi^0(X^{(1)},\tilde J^{(1)})
      -\widehat G_\xi^0(X^{(2)},\tilde J^{(2)})\bigr),\\
    \Delta\tilde J_+^0\coloneqq
      \mathcal J_\xi^0(X^{(1)},\tilde J^{(1)})
      -\mathcal J_\xi^0(X^{(2)},\tilde J^{(2)}).
\end{gather*}
Since the eigenvalues of $A$ lie in $[0,1]$, $A^2\preceq A$. Also, $\Delta Y\in\operatorname{range}(C)=\mathbf1^\perp$ columnwise gives
\begin{equation}\label{eq:deltaY_norm_bounds}
    \|\Delta Y\|_A^2
    =\|\Delta Y\|_F^2-\langle\Delta Y,B\Delta Y\rangle_F
    \le(1-\lambda_2(B))\|\Delta Y\|_F^2.
\end{equation}
The nonexpansiveness of the proximal map gives
\begin{equation}\label{eq:deltaX_norm_bounds}
    \|\Delta X\|_F^2\le\|A\Delta Z\|_F^2\le\|\Delta Z\|_A^2.
\end{equation}
Applying \eqref{eq:TC_energy_identity}, Lemma~\ref{lem:global_contraction_Tf}, and \eqref{eq:deltaY_norm_bounds} together with \eqref{eq:deltaX_norm_bounds}, in that order, yields
\begin{equation*}
\begin{aligned}
    \mathbb E\|\mathbf T_\xi^0U^{(1)}-\mathbf T_\xi^0U^{(2)}\|_{\mathcal M}^2
    &=\mathbb E\bigl[\|\Delta X_+^0\|_F^2
      +c\|\Delta\tilde J_+^0\|_F^2\bigr]+\|\Delta Y\|_A^2\\
    &\le\theta\bigl(\|\Delta X\|_F^2+c\|\Delta\tilde J\|_F^2\bigr)
      +\|\Delta Y\|_A^2\\
    &\le\theta\|\Delta Z\|_A^2+c\theta\|\Delta\tilde J\|_F^2
      +(1-\lambda_2(B))\|\Delta Y\|_F^2\\
    &\le\delta\|U^{(1)}-U^{(2)}\|_{\mathcal M}^2.
\end{aligned}
\end{equation*}
The last two inequalities use $\theta=1-\tau\ge3/4>0$ and $\delta\ge\theta$, $\delta\ge1-\lambda_2(B)$.
\end{proof}

\subsection{A Uniform Finite-Difference Perturbation Bound}
\label{subsection:uniform_perturbation}

We now compare the implemented update with the exact reference at the same input. The following observation bounds are uniform in the evaluation point, so no optimality condition is needed in this step.

\begin{lemma}[Forward-difference error]\label{lem:forward_difference_error}
Let $\varphi:\mathbb{R}^d\to\mathbb{R}$ be $L$-smooth and let $\|u\|=1$. Then, for every $x\in\mathbb{R}^d$ and $\beta>0$,
\begin{equation}\label{eq:forward_difference_error}
    \|\widehat\nabla_u \varphi(x;\beta)-uu^\top\nabla \varphi(x)\|
    \le \frac{L\beta}{2}.
\end{equation}

If, in addition, $u$ is uniformly distributed on $\mathbb S_d$, then
\begin{equation}\label{eq:forward_difference_mean_error}
    \left\|d\,\mathbb E_u\left[
    \widehat\nabla_u \varphi(x;\beta)-uu^\top\nabla \varphi(x)
    \right]\right\|^2
    \le \frac{dL^2\beta^2}{4}.
\end{equation}
\end{lemma}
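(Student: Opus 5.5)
The plan is to derive both bounds from a one-dimensional Taylor expansion along $u$. The first bound is deterministic and the second then follows by averaging it over the sphere. Set $\phi(t)\coloneqq\varphi(x+tu)$. Since $\varphi$ is $L$-smooth and $\|u\|=1$, $\phi'(t)=\langle\nabla\varphi(x+tu),u\rangle$ is $L$-Lipschitz. The standard descent-lemma estimate then gives
\[
    \bigl|\varphi(x+\beta u)-\varphi(x)-\beta\langle\nabla\varphi(x),u\rangle\bigr|
    =\left|\int_0^\beta\bigl(\phi'(t)-\phi'(0)\bigr)\,dt\right|
    \le\int_0^\beta Lt\,dt=\frac{L\beta^2}{2}.
\]

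Next I would write the estimation error as a scalar multiple of $u$:
\[
    \widehat\nabla_u\varphi(x;\beta)-uu^\top\nabla\varphi(x)
    =\frac{\varphi(x+\beta u)-\varphi(x)-\beta\langle\nabla\varphi(x),u\rangle}{\beta}\,u .
\]
Dividing the bound above by $\beta$ and using $\|u\|=1$ gives \eqref{eq:forward_difference_error} immediately.

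For the mean bound \eqref{eq:forward_difference_mean_error}, I would denote the scalar coefficient by $e(u)$, so that $|e(u)|\le L\beta/2$ for every $u$, and the vector in question is $d\,\mathbb E_u[e(u)u]$. The crude estimate $\|d\,\mathbb E[e(u)u]\|\le dL\beta/2$ loses a factor $\sqrt d$, so a different route is needed. The key step is to use the sphere's isotropy through duality. For any unit vector $v$, Cauchy--Schwarz gives
\[
    \langle v,d\,\mathbb E[e(u)u]\rangle
    =d\,\mathbb E[e(u)\langle v,u\rangle]
    \le d\sqrt{\mathbb E[e(u)^2]}\sqrt{\mathbb E[\langle v,u\rangle^2]}
    \le d\cdot\frac{L\beta}{2}\cdot\frac{1}{\sqrt d}
    =\frac{\sqrt d\,L\beta}{2},
\]
where I use $\mathbb E[uu^\top]=I_d/d$ for $u$ uniform on $\mathbb S_d$. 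Taking the supremum over $v$ and squaring yields $dL^2\beta^2/4$.

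The only delicate point is this last step, namely obtaining the $\sqrt d$ rather than the $d$ dependence. Recognizing the duality and Cauchy--Schwarz argument together with the second-moment identity for the sphere settles it; everything else is routine.
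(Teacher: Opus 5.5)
Your proposal is correct and follows essentially the same route as the paper. The paper writes the forward difference as $\int_0^1 uu^\top\nabla\varphi(x+t\beta u)\,dt$ to obtain the pointwise bound $L\beta/2$, and then gets the $\sqrt d$ scaling through the same duality over unit vectors $v$ combined with $\mathbb E_u\langle u,v\rangle^2=1/d$; the only cosmetic difference is that the paper bounds $|e(u)|\le L\beta/2$ pointwise and then uses $(\mathbb E_u|\langle u,v\rangle|)^2\le\mathbb E_u\langle u,v\rangle^2$, whereas you apply Cauchy--Schwarz to $\mathbb E[e(u)\langle v,u\rangle]$ directly.
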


\begin{lemma}[Uniform finite-difference perturbation]
\label{lem:uniform_perturbation}
Under \eqref{eq:analysis_communication_properties}, for every fixed input $U$ and every $\beta>0$,
\begin{equation}\label{eq:uniform_perturbation_bound}
    \mathbb E\|\mathbf T_{\xi}^{\beta}U-\mathbf T_\xi^0U\|_{\mathcal M}^2
    \le\mathcal E_\beta,
    \qquad
    \mathcal E_\beta\coloneqq\frac{L^2\beta^2}{4}
      \left[\gamma^2\sum_{i=1}^n\left(\frac{d^2}{R_i}+d\right)+c\nu\right].
\end{equation}
\end{lemma}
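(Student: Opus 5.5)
The two operators $\mathbf T_\xi^\beta=\mathbf T_C\mathbf T_{f,\xi}^\beta\mathbf T_{\Psi,A}$ and $\mathbf T_\xi^0=\mathbf T_C\mathbf T_{f,\xi}^0\mathbf T_{\Psi,A}$ share their outer factors, and both are applied to the same input $U=[Z^\top,Y^\top,\tilde J^\top]^\top$ with the same seed. So $\mathbf T_{\Psi,A}U=[X^\top,Y^\top,\tilde J^\top]^\top$ with $X=\operatorname{prox}_{\gamma\Psi}(AZ)$ is identical for both. The difference of the middle outputs is
\[
\bigl[-\gamma(\widehat G_\xi^\beta-\widehat G_\xi^0)^\top,\;0,\;(\mathcal J_\xi^\beta-\mathcal J_\xi^0)^\top\bigr]^\top ,
\]
with all maps evaluated at $(X,\tilde J)$. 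Since $\mathbf T_C$ is linear, the energy identity \eqref{eq:TC_energy_identity} applies to this difference. With a zero $Y$-block it gives
\[
\|\mathbf T_\xi^\beta U-\mathbf T_\xi^0U\|_{\mathcal M}^2=\gamma^2\|\widehat G_\xi^\beta-\widehat G_\xi^0\|_F^2+c\|\mathcal J_\xi^\beta-\mathcal J_\xi^0\|_F^2 .
\]
The proof then reduces to bounding these two expected squared errors separately.

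\textbf{Table term.} Row $(i,j)$ of $\mathcal J_\xi^\beta-\mathcal J_\xi^0$ equals $\sum_{u:(j,u)\in\mathcal S_i}e_{i,j,u}$, where $e_{i,j,u}:=\widehat\nabla_u f_{i,j}(x_i;\beta)-uu^\top\nabla f_{i,j}(x_i)$. The finite-difference quotient is a multiple of $u$, so each $e_{i,j,u}$ is parallel to $u$. By Assumption~\ref{assump:sampling_operator} the sampled directions within a row are orthonormal, so the errors are mutually orthogonal and the squared norm of the row is $\sum_u\|e_{i,j,u}\|^2$. Lemma~\ref{lem:forward_difference_error} bounds each summand by $L^2\beta^2/4$. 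Summing over all sampled pairs gives $\tfrac{L^2\beta^2}{4}\sum_i|\mathcal S_i|$, and taking expectations gives $\tfrac{L^2\beta^2}{4}\nu$. Multiplying by $c$ yields the $c\nu$ term.

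\textbf{Estimator term (main step).} Row $i$ of $\widehat G_\xi^\beta-\widehat G_\xi^0$ is $\frac{d}{R_i}\sum_{r=1}^{R_i}e_r$, with $e_r=e_{i,j_{i,r},u_{i,r}}$. The stored-table terms cancel exactly, so the $s_{i,j}$ do not appear. The draws are i.i.d., so I will use a bias–variance decomposition rather than a crude triangle bound. Write $\bar e=\mathbb E e_r$. Then
\[
\mathbb E\Bigl\|\tfrac{d}{R_i}\sum_r e_r\Bigr\|^2=d^2\|\bar e\|^2+\tfrac{d^2}{R_i}\bigl(\mathbb E\|e_r\|^2-\|\bar e\|^2\bigr)\le \|d\bar e\|^2+\tfrac{d^2}{R_i}\mathbb E\|e_r\|^2 .
\]
The second term is at most $\frac{d^2}{R_i}\frac{L^2\beta^2}{4}$ by \eqref{eq:forward_difference_error}. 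For the first term I average \eqref{eq:forward_difference_mean_error} over the uniform component $j$: $d\bar e=\frac1{m_i}\sum_j d\,\mathbb E_u[e_{i,j,u}]$, and convexity of $\|\cdot\|^2$ gives $\|d\bar e\|^2\le dL^2\beta^2/4$. Summing over $i$ and multiplying by $\gamma^2$ yields $\gamma^2\frac{L^2\beta^2}{4}\sum_i(d^2/R_i+d)$.

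The main care point is this bias–variance split. Applying $\|e_r\|\le L\beta/2$ naively to the squared sum would give $d^2$ instead of $d^2/R_i+d$, which is too loose. The split needs $\bar e$ to be the common mean of i.i.d. draws, which holds because component and direction are sampled independently and uniformly. Adding the estimator and table bounds gives $\mathcal E_\beta$.
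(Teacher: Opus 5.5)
Your proposal is correct and matches the paper's proof essentially step for step. The shared outer factors and $A+C^2=I_n$ reduce the $\mathcal M$-seminorm to $\gamma^2\|\widehat G_\xi^\beta-\widehat G_\xi^0\|_F^2+c\|\mathcal J_\xi^\beta-\mathcal J_\xi^0\|_F^2$; the table error is then controlled by rowwise orthogonality together with $\mathbb E\sum_i|\mathcal S_i|=\nu$, and the estimator error by the same i.i.d.\ bias--variance identity plus Jensen's inequality over the uniform component index. (One small correction to your aside: the crude bound is not always ``too loose'', since for $R_i=1$ it gives $d^2\le d^2+d$ and would already suffice, but this does not affect the argument.)
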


Proofs of Lemmas~\ref{lem:forward_difference_error} and~\ref{lem:uniform_perturbation}, together with the simplification in \eqref{eq:simplified_perturbation_bound}, are given in Appendix~\ref{appendix:finite_difference_estimates}.

\subsection{Proof of Theorem~\ref{thm:zivr_convergence_rate}}
\label{subsection:proof_theorem_one}

\begin{proof}[Proof of Theorem~\ref{thm:zivr_convergence_rate}]
The communication assumption implies \eqref{eq:analysis_communication_properties} and $\lambda_2(B)>0$. Since $0<\tau\le\sigma/4\le1/4$, $3/4\le\theta\le\delta<1$.
By Lemma~\ref{lem:operator_splitting}, the algorithm satisfies $U^{k+1}=\mathbf T_{\xi^k}^{\beta}U^k$ for $k\ge1$. Let $U^*$ be the fixed point constructed in Lemma~\ref{lem:reference_fixed_point}.
The algorithm's initialization $Y^1=CZ^1$ and the recursion $Y^{k+1}=Y^k+CZ^{k+1}$ in \eqref{eq:operator_splitting_y_recursion} imply $Y^k\in\operatorname{range}(C)$ for every $k\ge1$. Thus Lemma~\ref{lem:reference_contraction} applies to $(U^k,U^*)$.

The state $U^k$ is $\mathcal F_k$-measurable, and the fresh sampling law of Assumption~\ref{assump:sampling_operator} applies conditional on $\mathcal F_k$.
Using the pathwise fixed point in \eqref{eq:reference_fixed_point}, decompose the error as
\[
    U^{k+1}-U^*
    =\bigl(\mathbf T_{\xi^k}^0U^k-\mathbf T_{\xi^k}^0U^*\bigr)
      +\bigl(\mathbf T_{\xi^k}^{\beta}U^k-\mathbf T_{\xi^k}^0U^k\bigr).
\]
Set $\eta=(1-\delta)/(2\delta)>0$, so that $(1+\eta)\delta=(1+\delta)/2$ and $1+1/\eta=(1+\delta)/(1-\delta)$.
Applying Young's inequality to the error decomposition, followed by \eqref{eq:one_step_operator_rate} and \eqref{eq:uniform_perturbation_bound} conditional on $\mathcal F_k$, gives
\begin{equation}\label{eq:perturbed_conditional_recursion}
\begin{aligned}
    &\quad\mathbb E\!\left[\|U^{k+1}-U^*\|_{\mathcal M}^2
        \,\middle|\,\mathcal F_k\right]\\
    &\le(1+\eta)\delta\|U^k-U^*\|_{\mathcal M}^2
      +\left(1+\frac1\eta\right)\mathcal E_\beta\\
    &=\frac{1+\delta}{2}\|U^k-U^*\|_{\mathcal M}^2
      +\frac{1+\delta}{1-\delta}\mathcal E_\beta.
\end{aligned}
\end{equation}
The two terms in the error decomposition may be dependent: Young's inequality is applied pathwise before taking their expectations.

Set $r_k\coloneqq\mathbb E\|U^k-U^*\|_{\mathcal M}^2$. To bound the initial state, fix $0<a\le\lambda_2(B)$ and write
\[
    R_0\coloneqq\mathbb E\|X^0-X^*\|_F^2
      +c\mathbb E\|\tilde J^0-J^*\|_F^2.
\]
The moment condition \eqref{eq:initialization_moments} ensures $R_0<\infty$. Since $Y^*\in\operatorname{range}(C)$ and $A\preceq I_n$, \eqref{eq:original_stationary_y_construction} gives
\[
    \|Y^*\|_A^2\le\|Y^*\|_F^2
    \le\frac{\|CY^*\|_F^2}{a}
    =\frac{\gamma^2}{a}
      \sum_{i=1}^n\|\nabla f_i(x^*)-\nabla f(x^*)\|^2
    =\mathcal D(a)-R_0.
\]
The second inequality uses $C^2=B\succeq aI_n$ on $\mathbf1^\perp$. The initialization and fixed-point identities give
\[
    U^1=\mathbf T_C\begin{bmatrix}X^0-\gamma G^0\\0\\\tilde J^1\end{bmatrix},
    \qquad
    U^*=\mathbf T_C\begin{bmatrix}X^*-\gamma\bar G(X^*)\\Y^*\\J^*\end{bmatrix}.
\]
Apply Young's inequality with the same $\eta$ to the difference between the finite-difference and exact gradient/table updates at $(X^0,\tilde J^0)$.
Conditional on $(\zeta^0,\tilde J^0)$, Lemma~\ref{lem:global_contraction_Tf}, applied with the comparison input $(X^*,J^*)$, bounds the exact update,
and the gradient/table error estimates in the proof of Lemma~\ref{lem:uniform_perturbation} in Appendix~\ref{appendix:finite_difference_estimates} bound its perturbation by $\mathcal E_\beta$. Averaging and using the identity \eqref{eq:TC_energy_identity} yields
\begin{align*}
    r_1
    &=\mathbb E\!\left[
      \|X^0-X^*-\gamma(G^0-\bar G(X^*))\|_F^2
      +c\|\tilde J^1-J^*\|_F^2\right]+\|Y^*\|_A^2\\
    &\le(1+\eta)\theta R_0
      +\left(1+\frac1\eta\right)\mathcal E_\beta+\|Y^*\|_A^2\\
    &\le\mathcal D(a)
      +\frac{1+\delta}{1-\delta}\mathcal E_\beta.
\end{align*}
The last inequality uses $(1+\eta)\theta\le(1+\delta)/2<1$ and the bound on $\|Y^*\|_A^2$.

Taking total expectation in \eqref{eq:perturbed_conditional_recursion} gives $r_{k+1}\le(1+\delta)r_k/2+ (1+\delta)\mathcal E_\beta/(1-\delta)$ for $k\ge1$.
Iterating this recursion and substituting the bound on $r_1$ gives
\begin{align}
    r_k&\le\left(\frac{1+\delta}{2}\right)^{k-1}r_1+
      \frac{1+\delta}{1-\delta}\mathcal E_\beta
        \sum_{t=0}^{k-2}\left(\frac{1+\delta}{2}\right)^t\notag\\
    &\le\left(\frac{1+\delta}{2}\right)^{k-1}\mathcal D(a)+
      \frac{2(1+\delta)\mathcal E_\beta}{(1-\delta)^2}
        \left[1-\left(\frac{1+\delta}{2}\right)^k\right]\notag\\
    &\le\left(\frac{1+\delta}{2}\right)^{k-1}\mathcal D(a)+
      \frac{2(1+\delta)\mathcal E_\beta}{(1-\delta)^2}.
      \label{eq:perturbed_lyapunov_rate}
\end{align}
The second inequality evaluates the geometric sum after substituting the bound on $r_1$; the last uses $0<(1+\delta)/2<1$. The sum is empty at $k=1$, and $1-\delta>0$.
This argument uses only \eqref{eq:analysis_communication_properties} and $a\le\lambda_2(B)$, so it also applies to the accelerated matrices below. By \eqref{eq:simplified_perturbation_bound} and $1+\delta<2$,
\[
    \frac{2(1+\delta)\mathcal E_\beta}{(1-\delta)^2}
    \le\frac{4\mathcal E_\beta}{(1-\delta)^2}
    \le\frac{5\gamma^2dL^2(n+M\rho_J)\beta^2}{(1-\delta)^2}
    =\mathcal B_\beta.
\]
By the proximal identities for $X^k$ and $X^*$, nonexpansiveness and $A^2\preceq A$ give
\begin{equation}\label{eq:primal_lyapunov_comparison}
    \|X^k-X^*\|_F^2
    \le\|A(Z^k-Z^*)\|_F^2
    \le\|Z^k-Z^*\|_A^2
    \le\|U^k-U^*\|_{\mathcal M}^2.
\end{equation}
Taking expectations and choosing $a=\lambda_2(B)$ in the bound on $r_k$ gives
\[
    \mathbb E\|X^k-X^*\|_F^2
    \le r_k\le\left(\frac{1+\delta}{2}\right)^{k-1}
      \mathcal D\bigl(\lambda_2(B)\bigr)
      +\mathcal B_\beta.
\]
\end{proof}

\subsection{Proof of Theorem~\ref{thm:zivr_chebyshev_complexity}}
\label{subsection:proof_theorem_two}

We first state the spectral properties of the accelerated communication step, whose proof is given in Appendix~\ref{appendix:chebyshev_properties}. We then use these properties to transfer the operator estimates from Section~\ref{subsection:proof_theorem_one} to the accelerated state.

Let $T_r$ be the Chebyshev polynomials defined by
\[
    T_0(t)=1,\qquad T_1(t)=t,\qquad
    T_{r+1}(t)=2tT_r(t)-T_{r-1}(t),\qquad r\ge1.
\]
For $0<\rho<1$, set
\[
    P_K(t)\coloneqq\frac{T_K(t/\rho)}{T_K(1/\rho)}.
\]
When $\rho=0$, take $K=1$ and $P_1(t)=t$ to recover one ordinary communication round. In both cases, define
\begin{equation}\label{eq:chebyshev_matrices}
    A_K\coloneqq\frac{I_n+P_K(W)}{2},\qquad
    B_K\coloneqq I_n-A_K,\qquad
    C_K\coloneqq B_K^{1/2}.
\end{equation}
The following lemma verifies that $B_K$ is positive semidefinite, so its symmetric positive-semidefinite square root $C_K$ is well defined.

\begin{lemma}[Chebyshev communication and spectral gap]
\label{lem:chebyshev_spectral_gap}
Under the communication assumptions of Theorem~\ref{thm:zivr_chebyshev_complexity}, Algorithm~\ref{alg:chebyshev_consensus} computes $\zeta^{k+1}=A_K\chi^k$ in $K$ neighbor-exchange rounds.
The matrices in \eqref{eq:chebyshev_matrices} satisfy
\begin{equation}\label{eq:chebyshev_operator_properties}
    0\prec A_K\preceq I_n,\qquad
    0\preceq B_K\prec I_n,\qquad
    A_K+B_K=I_n,\qquad
    \operatorname{null}(B_K)=\operatorname{span}(\mathbf1).
\end{equation}
For $0<\rho<1$ and $K$ chosen by \eqref{eq:chebyshev_round_choice},
\begin{equation}\label{eq:chebyshev_gap_bound}
    \lambda_2(B_K)
    \ge\frac{1-T_K(1/\rho)^{-1}}{2}
    \ge\frac{K^2(1-\rho^2)}{4+2K^2(1-\rho^2)}\ge\tau.
\end{equation}
For $\rho=0$ and $K=1$, the gap equals $1/2\ge\tau$.
\end{lemma}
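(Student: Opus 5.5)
The proof has three parts: identify the matrix polynomial that Algorithm~\ref{alg:chebyshev_consensus} applies, read off the spectral properties \eqref{eq:chebyshev_operator_properties} from that polynomial, and then bound $\lambda_2(B_K)$ with the standard growth estimate for $T_K$ outside $[-1,1]$.

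\emph{Step 1: what the subroutine computes.} By induction on $r$, I will show that $\chi^{k,r}=P_r(W)\chi^k$, where $P_r(t)=T_r(t/\rho)/T_r(1/\rho)$. With $a_r=T_r(1/\rho)$, the recursion $a_{r+1}=2a_r/\rho-a_{r-1}$ is exactly the Chebyshev recursion evaluated at $1/\rho$, and $a_0=1$, $a_1=1/\rho$ match $T_0(1/\rho)$, $T_1(1/\rho)$. The base cases are immediate. For the inductive step, substituting $P_r(W)$ and $P_{r-1}(W)$ into the update gives $[2(W/\rho)T_r(W/\rho)-T_{r-1}(W/\rho)]/a_{r+1}$, which is $P_{r+1}(W)$. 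Each step costs one neighbor exchange, so $K$ rounds are used in total. The returned value is $(\chi^k+P_K(W)\chi^k)/2=A_K\chi^k$. When $\rho=0$ we have $K=1$, and this reduces to the ordinary update.

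\emph{Step 2: spectral properties.} $W$ is symmetric, so $W$, $P_K(W)$, $A_K$ and $B_K$ share eigenvectors. Since $W\mathbf1=\mathbf1$ and $P_K(1)=1$, the eigenvalue of $A_K$ on $\mathbf1$ is $1$, and $B_K\mathbf1=0$. Every other eigenvalue $\lambda$ of $W$ satisfies $|\lambda|\le\rho$: Assumption~\ref{assump:communication_network} gives connectivity and $\lambda_{\min}>-1$, and $\rho$ is defined as the norm on $\mathbf1^\perp$. Then $|\lambda/\rho|\le1$, so $|T_K(\lambda/\rho)|\le1$ and $|P_K(\lambda)|\le 1/T_K(1/\rho)<1$, because $T_K$ is increasing and exceeds $1$ on $(1,\infty)$. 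Therefore on $\mathbf1^\perp$ the eigenvalues of $A_K$ lie in $[(1-T_K(1/\rho)^{-1})/2,\,(1+T_K(1/\rho)^{-1})/2]\subset(0,1)$. This gives $0\prec A_K\preceq I_n$, $B_K\prec I_n$, $B_K\succeq0$, and $\operatorname{null}(B_K)=\operatorname{span}(\mathbf1)$. It also gives the first inequality $\lambda_2(B_K)\ge(1-T_K(1/\rho)^{-1})/2$. For $\rho=0$, $W=\mathbf1\mathbf1^\top/n$, so $B_1$ has eigenvalue $1/2$ on $\mathbf1^\perp$, and $1/2\ge\tau$ because $\tau\le1/4$.

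\emph{Step 3: the quantitative bound, which I expect to be the main obstacle.} I will use $T_K(1/\rho)\ge 1+K^2(1/\rho-1)$, and more usefully $T_K(\cosh s)=\cosh(Ks)\ge1+K^2s^2/2$ with $\cosh s=1/\rho$. This gives $1-T_K^{-1}\ge (K^2s^2/2)/(1+K^2s^2/2)$. It then remains to show $s^2\ge 1-\rho^2$, or at least $s^2/2\ge(1-\rho^2)/2$, in the form the target demands. Writing $q=\sqrt{1-\rho^2}$, we have $\tanh s=q$, hence $s=\operatorname{artanh}(q)\ge q$ and $s^2\ge 1-\rho^2$. Consequently $(1-T_K^{-1})/2\ge \tfrac12\cdot\frac{K^2(1-\rho^2)/2}{1+K^2(1-\rho^2)/2}=\frac{K^2(1-\rho^2)}{4+2K^2(1-\rho^2)}$, using that $x\mapsto x/(1+x)$ is increasing. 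For the final inequality, $K^2\ge 8\tau/(1-\rho^2)$ gives $K^2(1-\rho^2)=:y\ge8\tau$. Since $y/(4+2y)$ is increasing in $y$ and $\tau\le1/4$, we get $y/(4+2y)\ge 8\tau/(4+16\tau)\ge 8\tau/8=\tau$. The delicate points are checking the elementary $\operatorname{artanh}$ and $\cosh$ inequalities and handling the case $K=1$, where the bound still follows from the same chain.
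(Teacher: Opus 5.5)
Your proposal is correct and follows the paper's proof essentially step for step: the same induction showing $\chi^{k,r}=T_r(W/\rho)\chi^k/T_r(1/\rho)$ with $a_r=T_r(1/\rho)$, and the same spectral-mapping argument on $\mathbf1^\perp$ giving $\|P_K(W)-\mathbf1\mathbf1^\top/n\|_2\le T_K(1/\rho)^{-1}<1$. The quantitative chain also matches, via $\cosh\alpha=1/\rho$, $\alpha\ge\tanh\alpha=\sqrt{1-\rho^2}$, $\cosh(K\alpha)\ge1+K^2\alpha^2/2$, and monotonicity of $y\mapsto y/(4+2y)$ with $\tau\le\sigma/4\le1/4$. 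The few facts you leave implicit, namely $|T_K|\le1$ on $[-1,1]$ and $a_r\ge1>0$, are the ones the paper states in passing.
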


The proof is given in Appendix~\ref{appendix:chebyshev_properties}.

\begin{proof}[Proof of Theorem~\ref{thm:zivr_chebyshev_complexity}]
In this proof and the cost calculation in Appendix~\ref{appendix:complexity_proofs}, use $(A_K,B_K,C_K)$ in place of $(A,B,C)$ in the operators of Lemma~\ref{lem:operator_splitting}. For the analysis, the corresponding state and block seminorm are
\[
    U^k\coloneqq[(Z^k)^\top,(Y^k)^\top,(\tilde J^k)^\top]^\top,
    \qquad
    \|U\|_{\mathcal M}^2
    \coloneqq\|Z\|_{A_K}^2+\|Y\|_F^2+c\|\tilde J\|_F^2,
\]
where $U=[Z^\top,Y^\top,\tilde J^\top]^\top$. Lemma~\ref{lem:chebyshev_spectral_gap} gives $\zeta^{k+1}=A_K\chi^k$, including at $k=0$,
and verifies \eqref{eq:analysis_communication_properties} for these matrices. Lemma~\ref{lem:operator_splitting}, applied with $A_K$ and $C_K$, gives
\begin{equation}\label{eq:accelerated_state_recursion}
    U^{k+1}=\mathbf T_{\xi^k}^{\beta}U^k,\qquad
    X^k=\operatorname{prox}_{\gamma\Psi}(A_KZ^k),\qquad
    Y^{k+1}=Y^k+C_KZ^{k+1},\qquad k\ge1,
\end{equation}
where the operators use $A_K$ and $C_K$, and $Z^1=X^0-\gamma G^0$, $Y^1=C_KZ^1$. In particular, $Y^k\in\operatorname{range}(C_K)$ for every $k\ge1$.

Lemma~\ref{lem:reference_fixed_point}, applied with $(A_K,B_K,C_K)$, provides $U^*=[(Z^*)^\top,(Y^*)^\top,(J^*)^\top]^\top$ with $Y^*\in\operatorname{range}(C_K)$ and
\[
    \operatorname{prox}_{\gamma\Psi}(A_KZ^*)=X^*,
    \qquad \mathbf T_\xi^0U^*=U^*.
\]

By the spectral gap in Lemma~\ref{lem:chebyshev_spectral_gap},
\[
    \delta_K\coloneqq\max\{1-\tau,1-\lambda_2(B_K)\}=1-\tau.
\]
Lemmas~\ref{lem:reference_contraction} and~\ref{lem:uniform_perturbation} therefore give the same reference contraction and uniform perturbation bound with $\delta_K$ in place of $\delta$ and the same $\mathcal E_\beta$.
These lemmas use only \eqref{eq:analysis_communication_properties}; entrywise nonnegativity of $P_K(W)$ is unnecessary.
Apply \eqref{eq:perturbed_lyapunov_rate} with $a=\tau\le\lambda_2(B_K)$ and $\delta=\delta_K$. Since $(1+\delta_K)/2=1-\tau/2$ and $1-\delta_K=\tau$,
\begin{align*}
    \mathbb E\|U^k-U^*\|_{\mathcal M}^2
    &\le\left(1-\frac\tau2\right)^{k-1}
      \mathcal D(\tau)
      +\frac{4(1-\tau/2)\mathcal E_\beta}{\tau^2}\\
    &\le\left(1-\frac\tau2\right)^{k-1}
      \mathcal D(\tau)
      +\mathcal B_{{\rm acc},\beta}.
\end{align*}
For the last inequality, \eqref{eq:simplified_perturbation_bound} gives
\[
    \frac{4(1-\tau/2)\mathcal E_\beta}{\tau^2}
    \le\frac{4\mathcal E_\beta}{\tau^2}
    \le\frac{5\gamma^2dL^2(n+M\rho_J)\beta^2}{\tau^2}
    =\mathcal B_{{\rm acc},\beta}.
\]
The first inequality uses $1-\tau/2\le1$; the second uses \eqref{eq:simplified_perturbation_bound}. The primal comparison \eqref{eq:primal_lyapunov_comparison}, with $A_K^2\preceq A_K$, gives
\[
    \mathbb E\|X^k-X^*\|_F^2
    \le\mathbb E\|U^k-U^*\|_{\mathcal M}^2
    \le\left(1-\frac\tau2\right)^{k-1}
      \mathcal D(\tau)+\mathcal B_{{\rm acc},\beta}.
\]
Finally, the communication subroutine uses $K$ neighbor-exchange rounds and leaves the function queries in the gradient and table updates unchanged.
\end{proof}

The proofs of Corollaries~\ref{cor:unaccelerated_complexity} and~\ref{cor:accelerated_complexity} are given in Appendix~\ref{appendix:complexity_proofs}.

\section{Numerical Experiments}
\label{section:numerical_experiments}

We evaluate Distributed ZIVR and its Chebyshev-accelerated variant on two real-data problems with elastic-net regularization: logistic regression on handwritten digits and robust regression on airfoil measurements.
We compare them with proximal adaptations of DGD--2p and GT--2d~\cite{tang2021distributed}, VR--GE~\cite{mu2024variance}, and GT--SAGA--2pVRGE~\cite{mu2026distributed}.
For DGD--2p, GT--2d, and VR--GE, which do not exploit the local finite-sum structure, each agent treats its local average $f_i=m_i^{-1}\sum_{j=1}^{m_i}f_{i,j}$ as a single smooth function.
Each evaluation of $f_i$ is implemented by evaluating and averaging all $m_i$ component functions and is charged $m_i$ component-oracle calls. Distributed ZIVR and GT--SAGA--2pVRGE instead access the component functions individually.

All methods start from zero and communicate over an undirected path graph with Metropolis weights. They use the same fixed smoothing radius $\beta=10^{-9}$ and ridge coefficient $\lambda_2=0.1$.
Each objective additionally contains $\psi(x)=\lambda_1\|x\|_1$ with $\lambda_1=0.005$ for both problems, handled by the exact soft-thresholding proximal map.
The function-value oracles evaluate the smooth component losses. Stepsizes are tuned separately for each method using the final portion of its tuning trajectory. One Chebyshev configuration is used for each problem, with $36$ mixing rounds for Digits and $8$ for Airfoil.

We report the mean squared solution error and consensus error,
\[
    E^k=\frac{1}{n}\sum_{i=1}^{n}\|x_i^k-x_{\mathrm{ref}}\|^2,
    \qquad
    C^k=\frac{1}{n}\sum_{i=1}^{n}\|x_i^k-\bar{x}^k\|^2,
\]
where $\bar{x}^k=\frac{1}{n}\sum_i x_i^k$ and $x_{\mathrm{ref}}$ is an accurately computed numerical minimizer of the regularized objective.
Oracle calls count component-function evaluations, reported as the largest cumulative count among agents, including initialization. Communication counts include every mixing round.

\subsection{Logistic Regression on Handwritten Digits}
\label{subsec:numerical_digits}

We classify odd versus even digits using the Digits dataset~\cite{alpaydin1998optdigits}. The training set contains $1536$ images, distributed over $64$ agents with $24$ distinct observations per agent.
The $64$ pixel intensities are scaled to $[0,1]$, and an intercept is appended, giving $d=65$.
Local datasets are class-skewed, with approximately three quarters of each agent's observations coming from one parity class.
With augmented feature vector $a_{i,j}$ and label $y_{i,j}\in\{-1,1\}$, we solve
\begin{equation}\label{eq:numerical_digits_objective}
    \min_{x\in\mathbb R^d}\quad h(x)=
    \frac{1}{n}\sum_{i=1}^{n}\frac{1}{m_i}\sum_{j=1}^{m_i}
    \log\!\left(1+\exp(-y_{i,j}a_{i,j}^{\top}x)\right)
    +\frac{\lambda_2}{2}\|x\|^2+\lambda_1\|x\|_1.
\end{equation}
This is an instance of~\eqref{eq:formulation}, with $f_{i,j}$ equal to the logistic loss of observation $(a_{i,j},y_{i,j})$ plus $\lambda_2\|x\|^2/2$, and $\psi(x)=\lambda_1\|x\|_1$.
The component functions are convex and smooth, the ridge term makes each local average $\lambda_2$-strongly convex, and the $\ell_1$ term promotes sparsity in the fitted coefficients.

\begin{figure}[!tb]
    \centering
    \begin{minipage}[t]{0.325\linewidth}
        \centering
        \includegraphics[width=\linewidth]{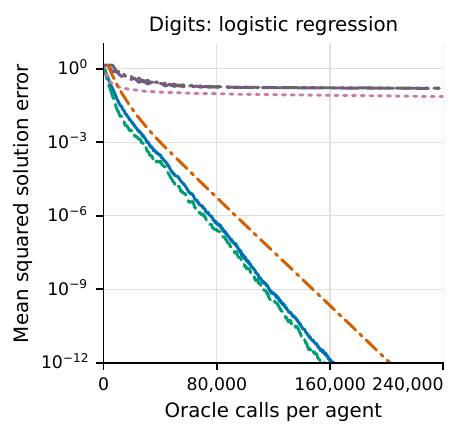}
        \caption{Digits: solution error.}
        \label{fig:digits_oracle_calls}
    \end{minipage}\hfill
    \begin{minipage}[t]{0.325\linewidth}
        \centering
        \includegraphics[width=\linewidth]{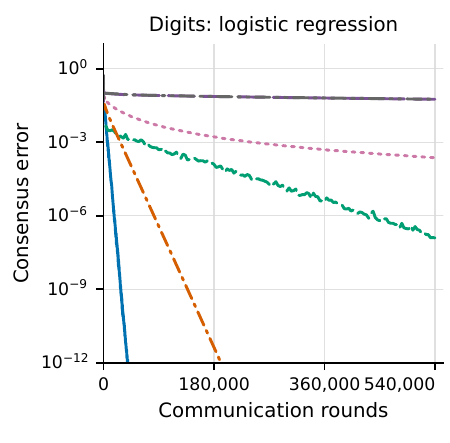}
        \caption{Digits: consensus error vs. communications.}
        \label{fig:digits_consensus_communications}
    \end{minipage}\hfill
    \begin{minipage}[t]{0.325\linewidth}
        \centering
        \includegraphics[width=\linewidth]{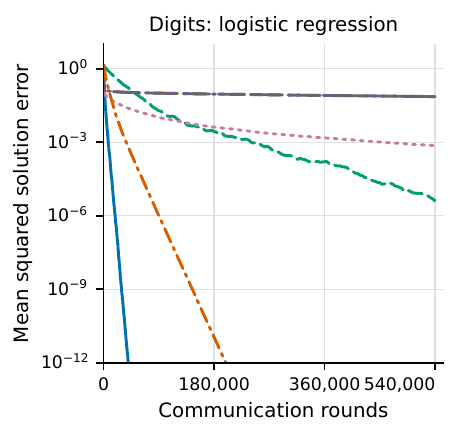}
        \caption{Digits: communications.}
        \label{fig:digits_communications}
    \end{minipage}

    \par\medskip
    \begin{minipage}[t]{0.325\linewidth}
        \centering
        \includegraphics[width=\linewidth]{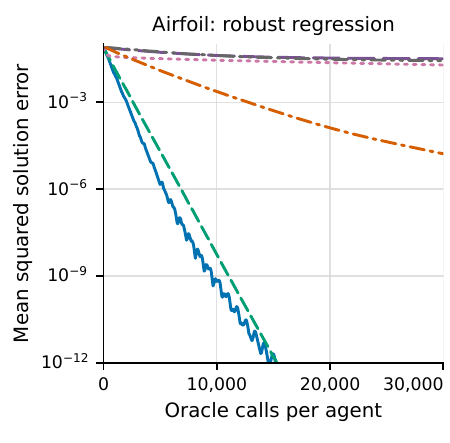}
        \caption{Airfoil: solution error.}
        \label{fig:airfoil_oracle_calls}
    \end{minipage}\hfill
    \begin{minipage}[t]{0.325\linewidth}
        \centering
        \includegraphics[width=\linewidth]{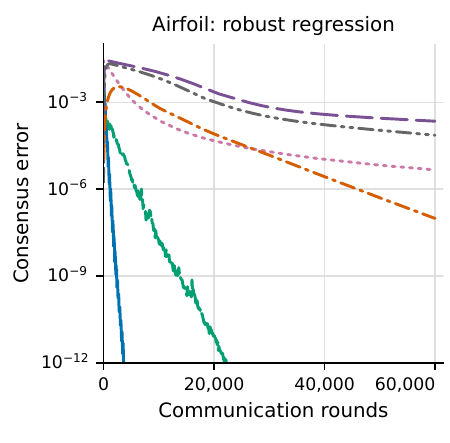}
        \caption{Airfoil: consensus error vs. communications.}
        \label{fig:airfoil_consensus_communications}
    \end{minipage}\hfill
    \begin{minipage}[t]{0.325\linewidth}
        \centering
        \includegraphics[width=\linewidth]{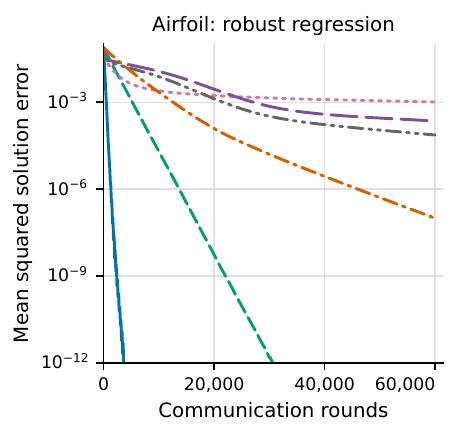}
        \caption{Airfoil: communications.}
        \label{fig:airfoil_communications}
    \end{minipage}
    \par\smallskip
    \includegraphics[width=\linewidth]{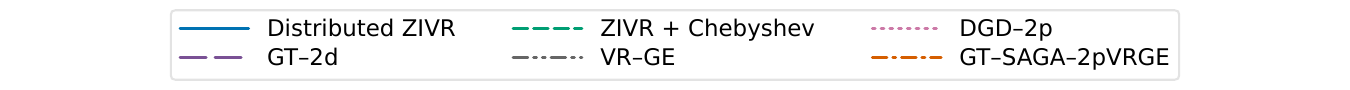}
\end{figure}
Figures~\ref{fig:digits_oracle_calls}--\ref{fig:digits_communications} 
illustrate the benefit of combining incremental gradient estimation with a correction for disagreement among agents. After the initial transient, Distributed ZIVR exhibits a linear convergence behavior which is consistent with the convergence result for strongly convex composite objectives.
DGD–2p uses inexpensive directional estimates, but their variance demands the use of a diminishing stepsize regime which slows down convergence. GT–2d obtains more accurate estimates at a substantially greater query cost, while VR–GE also incurs the cost of evaluating complete local averages.
 GT–SAGA–2pVRGE, which also combines incremental variance reduction with gradient tracking, outperforms the remaining baselines but converges more slowly than both Distributed ZIVR variants in this experiment.
 Chebyshev mixing slightly improves progress per oracle call, but its additional exchanges outweigh this gain when communication is counted. Thus, for this configuration, ordinary mixing already provides sufficient information exchange to support effective local optimization.

\subsection{Robust Regression on Airfoil Measurements}
\label{subsec:numerical_airfoil}

The Airfoil Self-Noise dataset~\cite{brooks1989airfoil} relates five physical measurements to sound pressure.
We use $1280$ training observations, partitioned over $80$ agents with $16$ distinct observations each. Features and responses are standardized using the training data, and an intercept gives $d=6$. Using the smooth, nonquadratic pseudo-Huber loss with $\delta=0.1$, we solve
\begin{equation}\label{eq:numerical_airfoil_objective}
    \min_{x\in\mathbb R^d}\quad h(x) = 
    \frac{1}{n}\sum_{i=1}^{n}\frac{1}{m_i}\sum_{j=1}^{m_i}
    \delta\left(\sqrt{\delta^2+(a_{i,j}^{\top}x-y_{i,j})^2}-\delta\right)
    +\frac{\lambda_2}{2}\|x\|^2+\lambda_1\|x\|_1.
\end{equation}
Here $f_{i,j}$ in~\eqref{eq:formulation} is the pseudo-Huber loss of one observation plus the ridge term, and $\psi(x)=\lambda_1\|x\|_1$, with the same elastic-net coefficients as above.
The pseudo-Huber loss is convex and smooth, so the ridge term again ensures $\lambda_2$-strong convexity of each local average, while the nonsmooth penalty is handled proximally.

Figures~\ref{fig:airfoil_oracle_calls}--\ref{fig:airfoil_communications} show that both ZIVR variants rapidly reduce solution error with respect to oracle calls.
Distributed ZIVR reduces both solution and consensus errors with fewer communication rounds than its Chebyshev variant.
GT--SAGA--2pVRGE improves more slowly; the other baselines retain larger errors over the comparison budgets.

The limited oracle-efficiency gains from Chebyshev acceleration are consistent with the rate bounds.
Both ZIVR variants use $R_i=|\mathcal S_i^k|=1$, namely one component--direction pair for the estimator and one for the table refresh, and communicate after every such iteration.
With only two fresh pairs per agent between communication steps, local estimation progresses slowly relative to mixing: the path graphs are not poorly connected enough for the network term to dominate $\min\{\tau,\lambda_2(B)\}$ in Theorem~\ref{thm:zivr_convergence_rate}.
In this regime, the choice~\eqref{eq:chebyshev_round_choice} can already give $K=1$ when $8\tau\le 1-\rho^2$, so additional mixing does not improve the oracle-complexity order.
The larger fixed degrees tested here may improve transient behavior, as on Digits, but incur extra communication without a substantial reduction in oracle calls.

\section{Conclusion}
\label{section:conclusion}

We proposed Distributed \AlgAbbrv{}, combining incremental zeroth-order variance reduction with proximal gradient tracking for decentralized composite finite-sum optimization at constant per-iteration oracle cost, without full refreshes.
For strongly convex local averages, we established linear convergence up to a controllable finite-difference error.
With Chebyshev mixing, the method matches centralized oracle complexity with inverse square-root spectral-gap dependence in communication.
Numerical experiments support its effectiveness.

Future directions include extending the method to convex problems without strong convexity and developing Nesterov-type acceleration, potentially via Katyusha-style momentum~\cite{allen2018katyusha}, to improve condition-number dependence while retaining constant per-iteration oracle cost.

\paragraph{Statement on AI use.}
The authors used ChatGPT 5.6 Sol to improve the exposition of the proofs. Notably, The authors adopted the AI system’s suggestion to simplify the convergence analysis by first analyzing an exact reference operator and treating finite-difference errors as a perturbation. In addition, the AI system was used for polishing the manuscript's language.
All AI-assisted revisions were reviewed and verified by the authors, who take full responsibility for the manuscript's content and accuracy.

\bibliographystyle{IEEEtran}
\bibliography{main_bib}

\appendix
\section{Technical Proofs}
\label{section:technical_proofs}

This appendix contains the supporting proofs for Section~\ref{section:convergence_analysis} and the oracle and communication cost derivations for Corollaries~\ref{cor:unaccelerated_complexity} and~\ref{cor:accelerated_complexity}.
The notation and assumptions are those used in Section~\ref{section:convergence_analysis}.

\subsection{Operator Representation and Fixed-Point Construction}
\label{appendix:operator_fixed_point}

\begin{proof}[Proof of Lemma~\ref{lem:operator_splitting}]
Let $G^k$ be the stacked algorithmic estimator. We prove by induction that the last block of $U^k$ is $\tilde J^k$ and that
\begin{equation}\label{eq:operator_splitting_auxiliary_identities}
    \zeta^k=AZ^k,
    \qquad
    X^k=\operatorname{prox}_{\gamma\Psi}(AZ^k),
    \qquad
    CY^k=X^{k-1}-\gamma G^{k-1}-AZ^k.
\end{equation}

At $k=1$, the table correspondence holds by the definition of $U^1$, and
\begin{align*}
    \zeta^1&=A(X^0-\gamma G^0)=AZ^1,\\
    X^1&=\operatorname{prox}_{\gamma\Psi}(\zeta^1)
       =\operatorname{prox}_{\gamma\Psi}(AZ^1),\\
    CY^1&=C^2Z^1=(I_n-A)Z^1=X^0-\gamma G^0-AZ^1.
\end{align*}
The first two lines use the algorithm's initialization and rowwise proximal update; the last uses $Y^1=CZ^1$ and $C^2=I_n-A$.

Suppose the correspondence and \eqref{eq:operator_splitting_auxiliary_identities} hold at $k\ge1$. Substitution into the three operators gives
\begin{equation}\label{eq:operator_splitting_block_recursion}
    U^{k+1}=\begin{bmatrix}
        X^k-\gamma G^k-CY^k\\
        C(X^k-\gamma G^k)+AY^k\\
        \mathcal J_{\xi^k}^{\beta^k}(X^k,\tilde J^k)
    \end{bmatrix}.
\end{equation}
The last block is $\tilde J^{k+1}$ by the algorithm's table update. The first two blocks and $C^2+A=I_n$ give
\begin{align}
    Y^{k+1}
    &=C(Z^{k+1}+CY^k)+AY^k\notag\\
    &=Y^k+CZ^{k+1}.
    \label{eq:operator_splitting_y_recursion}
\end{align}
Using the last identity in \eqref{eq:operator_splitting_auxiliary_identities} in the first block gives
\[
    Z^{k+1}
    =AZ^k+X^k-X^{k-1}-\gamma(G^k-G^{k-1})=\chi^k,
\]
where the last equality uses $AZ^k=\zeta^k$ and the algorithm's history correction. Its communication and proximal steps therefore yield
\begin{align*}
    \zeta^{k+1}&=A\chi^k=AZ^{k+1},\\
    X^{k+1}&=\operatorname{prox}_{\gamma\Psi}(\zeta^{k+1})
           =\operatorname{prox}_{\gamma\Psi}(AZ^{k+1}).
\end{align*}
Finally, \eqref{eq:operator_splitting_y_recursion}, $C^2=I_n-A$, and the first block of \eqref{eq:operator_splitting_block_recursion} give
\begin{align*}
    CY^{k+1}
    &=CY^k+(I_n-A)Z^{k+1}\\
    &=X^k-\gamma G^k-Z^{k+1}+(I_n-A)Z^{k+1}\\
    &=X^k-\gamma G^k-AZ^{k+1}.
\end{align*}
This completes the induction.
\end{proof}

\begin{proof}[Proof of Lemma~\ref{lem:reference_fixed_point}]
Since $f$ is finite and differentiable on $\mathbb R^d$, the convex subdifferential sum rule and optimality of $x^*$ give
\[
    0\in\partial h(x^*)=\nabla f(x^*)+\partial\psi(x^*),
    \qquad -\frac1n\sum_{i=1}^n\nabla f_i(x^*)\in\partial\psi(x^*).
\]
Hence
\begin{equation}\label{eq:original_lifted_subgradient_choice}
    S^*\coloneqq-\mathbf1\left(\frac1n\mathbf1^\top\bar G(X^*)\right)
      \in\partial\Psi(X^*),
    \qquad \mathbf1^\top\bigl(\bar G(X^*)+S^*\bigr)=0.
\end{equation}
Set $Z^*=X^*+\gamma S^*$. Since $\operatorname{range}(C)=\mathbf1^\perp$ columnwise, there is a unique $Y^*\in\operatorname{range}(C)$ satisfying
\begin{equation}\label{eq:original_stationary_y_construction}
    CY^*=-\gamma\bigl(\bar G(X^*)+S^*\bigr).
\end{equation}
Both $X^*$ and $S^*$ are consensual, so $AZ^*=Z^*$ and $CZ^*=0$. The proximal characterization therefore gives
\[
    AZ^*-X^*=\gamma S^*\in\gamma\partial\Psi(X^*),
    \qquad \operatorname{prox}_{\gamma\Psi}(AZ^*)=X^*.
\]
At $(X^*,J^*)$, $[J_i(X^*)]_{j,:}^\top=\nabla f_{i,j}(x^*)$, so for every $(j,u)\in\mathcal R_i$,
\[
    uu^\top\bigl(\nabla f_{i,j}(x^*)-[J_i(X^*)]_{j,:}^\top\bigr)=0.
\]
Consequently, \eqref{eq:exact_reference_estimator}--\eqref{eq:exact_reference_table} give, for every $\xi$,
\begin{align*}
    [\widehat G_\xi^0(X^*,J^*)]_{i,:}^\top
    &=\frac1{m_i}\sum_{j=1}^{m_i}\nabla f_{i,j}(x^*)=\nabla f_i(x^*),\\
    \mathcal J_\xi^0(X^*,J^*)
    &=(\mathcal I_J-\Pi_\xi)J^*+\Pi_\xi J^*=J^*.
\end{align*}
Using \eqref{eq:original_stationary_y_construction} in the first block and $C^2+A=I_n$ in the second gives
\[
    \mathbf T_\xi^0U^*
    =\begin{bmatrix}
      X^*-\gamma\bar G(X^*)-CY^*\\
      C\bigl(X^*-\gamma\bar G(X^*)\bigr)+AY^*\\J^*
    \end{bmatrix}
    =\begin{bmatrix}
      Z^*\\CZ^*+(C^2+A)Y^*\\J^*
    \end{bmatrix}
    =U^*.
\]
\end{proof}

\subsection{Sampling and Gradient Estimates}
\label{appendix:sampling_gradient_estimates}

\begin{proof}[Proof of Lemma~\ref{lem:zivr_estimator_diff}]
Uniform component sampling and $d\mathbb E_u[uu^\top]=I_d$ give, for every fixed $(X,\tilde J)$,
\[
    [\mathbb E\widehat G_\xi^0(X,\tilde J)]_{i,:}^\top
    =\frac1{m_i}\sum_{j=1}^{m_i}s_{i,j}
      +\frac d{m_i}\sum_{j=1}^{m_i}\mathbb E_u[uu^\top]
        \bigl(\nabla f_{i,j}(x_i)-s_{i,j}\bigr)
    =\nabla f_i(x_i).
\]
This proves \eqref{eq:exact_estimator_unbiasedness}. For later use, let $R\ge1$ be an integer.
If $V_1,\ldots,V_R\in\mathbb R^d$ are square-integrable i.i.d. vectors and $a\in\mathbb R^d$ is deterministic, then
\begin{equation}\label{eq:iid_batch_second_moment}
\begin{aligned}
    \mathbb E\left\|a+\frac1R\sum_{r=1}^R(V_r-\mathbb E V_1)\right\|^2
    &=\|a\|^2+\frac1{R^2}\sum_{r=1}^R
      \mathbb E\|V_r-\mathbb E V_1\|^2\\
    &=\|a\|^2+\frac1R\left(\mathbb E\|V_1\|^2-\|\mathbb E V_1\|^2\right).
\end{aligned}
\end{equation}
The first equality uses zero means and independence to eliminate the cross terms; the second uses identical distributions and expands the centered square.

For the difference bound, write $g_i^{(a)}=[g^{(a)}]_{i,:}^\top$ and set
\[
    \Delta J_i=J_i(X^{(1)})-J_i(X^{(2)}),\qquad
    \Delta\tilde J_i=\tilde J_i^{(1)}-\tilde J_i^{(2)},\qquad
    v_{i,j}=[\Delta J_i-\Delta\tilde J_i]_{j,:}^\top.
\]
Define the component mean by
\[
    \bar v_i\coloneqq\frac1{m_i}\sum_{j=1}^{m_i}v_{i,j}.
\]
For $\Delta\bar g_i\coloneqq\nabla f_i(x_i^{(1)})-\nabla f_i(x_i^{(2)})$, \eqref{eq:exact_reference_estimator} gives
\[
    g_i^{(1)}-g_i^{(2)}
    =\Delta\bar g_i+\frac1{R_i}\sum_{r=1}^{R_i}
      \left(d u_{i,r}u_{i,r}^\top v_{i,j_{i,r}}-\bar v_i\right),
\]
and uniform sampling gives
\[
    \mathbb E_{j,u}[d uu^\top v_{i,j}]
    =\frac d{m_i}\sum_{j=1}^{m_i}\mathbb E_u[uu^\top]v_{i,j}=\bar v_i.
\]
Applying \eqref{eq:iid_batch_second_moment} with $V_r=d u_{i,r}u_{i,r}^\top v_{i,j_{i,r}}$ and $a=\Delta\bar g_i$ yields
\begin{align*}
    \mathbb E\|g_i^{(1)}-g_i^{(2)}\|^2
    &=\|\Delta\bar g_i\|^2
      +\frac1{R_i}\left(\frac{d^2}{m_i}\sum_{j=1}^{m_i}
        \mathbb E_u(u^\top v_{i,j})^2-\|\bar v_i\|^2\right)\\
    &=\|\Delta\bar g_i\|^2
      +\frac1{R_i}\left(\frac d{m_i}
        \|\Delta J_i-\Delta\tilde J_i\|_F^2-\|\bar v_i\|^2\right)\\
    &\le\|\Delta\bar g_i\|^2
      +\frac{2d}{m_iR_i}\bigl(\|\Delta J_i\|_F^2+\|\Delta\tilde J_i\|_F^2\bigr).
\end{align*}
The first equality also uses $\|u\|=1$; the second uses $\mathbb E_u[uu^\top]=I_d/d$.
The inequality drops $-\|\bar v_i\|^2$ and applies $\|\Delta J_i-\Delta\tilde J_i\|_F^2 \le2\|\Delta J_i\|_F^2+2\|\Delta\tilde J_i\|_F^2$. Jensen's inequality gives
\[
    \|\Delta\bar g_i\|^2
    =\left\|\frac1{m_i}\sum_{j=1}^{m_i}[\Delta J_i]_{j,:}^\top\right\|^2
    \le\frac1{m_i}\sum_{j=1}^{m_i}\|[\Delta J_i]_{j,:}^\top\|^2
    =\frac1{m_i}\|\Delta J_i\|_F^2.
\]
Substituting the last two bounds and summing over agents yields
\begin{align*}
    \mathbb E\|g^{(1)}-g^{(2)}\|_F^2
    &\le\sum_{i=1}^n\left[
      \left(\frac1{m_i}+\frac{2d}{m_iR_i}\right)\|\Delta J_i\|_F^2
      +\frac{2d}{m_iR_i}\|\Delta\tilde J_i\|_F^2\right]\\
    &\le\rho_0\|J(X^{(1)})-J(X^{(2)})\|_F^2
      +\rho_J\|\tilde J^{(1)}-\tilde J^{(2)}\|_F^2,
\end{align*}
where the final inequality uses the maxima in \eqref{eq:sampling_constants}.
\end{proof}

\begin{proof}[Proof of Lemma~\ref{lem:jacobian_update_diff}]
Set $\Delta J=J(X^{(1)})-J(X^{(2)})$ and $\Delta\tilde J=\tilde J^{(1)}-\tilde J^{(2)}$. By \eqref{eq:exact_reference_table},
\[
    (\tilde J^{(1)})^+-(\tilde J^{(2)})^+
    =(\mathcal I_J-\Pi_\xi)\Delta\tilde J+\Pi_\xi\Delta J.
\]
Self-adjointness and $\Pi_\xi^2=\Pi_\xi$ give the pathwise cross-term identity
\[
    \langle(\mathcal I_J-\Pi_\xi)\Delta\tilde J,\Pi_\xi\Delta J\rangle_F
    =\langle\Delta\tilde J,(\Pi_\xi-\Pi_\xi^2)\Delta J\rangle_F=0.
\]
Taking total expectation in \eqref{eq:expected_sampling_projection} gives
\[
    \mathbb E\Pi_\xi
    =\mathbb E\!\left[\mathbb E[\Pi^k\mid\mathcal F_k]\right]
    =\sigma\mathcal I_J.
\]
Expanding the square and using this identity gives
\begin{align*}
    \mathbb E\|(\tilde J^{(1)})^+-(\tilde J^{(2)})^+\|_F^2
    &=\mathbb E\|(\mathcal I_J-\Pi_\xi)\Delta\tilde J\|_F^2
      +\mathbb E\|\Pi_\xi\Delta J\|_F^2\\
    &=\langle\Delta\tilde J,(\mathcal I_J-\mathbb E\Pi_\xi)\Delta\tilde J\rangle_F
      +\langle\Delta J,\mathbb E\Pi_\xi\Delta J\rangle_F\\
    &=(1-\sigma)\|\Delta\tilde J\|_F^2+\sigma\|\Delta J\|_F^2.
\end{align*}
\end{proof}

\begin{proof}[Proof of Lemma~\ref{lem:gradient_monotonicity_bound}]
By Assumption~\ref{assump:basic_problem}, each local average $f_i$ is $\mu$-strongly convex. Strong monotonicity of its gradient gives
\[
    \left\langle \Delta X,\Delta\bar G\right\rangle_F
    =\sum_{i=1}^n\left\langle x_i^{(1)}-x_i^{(2)},
      \nabla f_i(x_i^{(1)})-\nabla f_i(x_i^{(2)})\right\rangle
    \ge\mu\sum_{i=1}^n\|x_i^{(1)}-x_i^{(2)}\|^2
    =\mu\|\Delta X\|_F^2.
\]
Convexity and $L$-smoothness of each component imply cocoercivity:
\[
    \left\langle x_i^{(1)}-x_i^{(2)},
    \nabla f_{i,j}(x_i^{(1)})-\nabla f_{i,j}(x_i^{(2)})\right\rangle
    \ge
    \frac{1}{L}\|\nabla f_{i,j}(x_i^{(1)})
    -\nabla f_{i,j}(x_i^{(2)})\|^2 .
\]
Hence
\[
\begin{aligned}
    \left\langle \Delta X,\Delta\bar G\right\rangle_F
    &=
    \sum_{i=1}^n\frac{1}{m_i}\sum_{j=1}^{m_i}
    \left\langle x_i^{(1)}-x_i^{(2)},
    \nabla f_{i,j}(x_i^{(1)})-\nabla f_{i,j}(x_i^{(2)})\right\rangle  \\
    &\ge
    \sum_{i=1}^n\frac{1}{m_iL}\sum_{j=1}^{m_i}
    \|\nabla f_{i,j}(x_i^{(1)})-\nabla f_{i,j}(x_i^{(2)})\|^2
    \ge
    \frac{1}{m_{\max}L}\|\Delta J\|_F^2 .
\end{aligned}
\]
The last inequality uses $m_i\le m_{\max}$ and the definition of $\Delta J$. Averaging the two lower bounds gives
\[
    \langle\Delta X,\Delta\bar G\rangle_F
    =\tfrac12\langle\Delta X,\Delta\bar G\rangle_F
      +\tfrac12\langle\Delta X,\Delta\bar G\rangle_F
    \ge\frac\mu2\|\Delta X\|_F^2
      +\frac1{2m_{\max}L}\|\Delta J\|_F^2.
\]
\end{proof}

\subsection{Finite-Difference Error Estimates}
\label{appendix:finite_difference_estimates}

\begin{proof}[Proof of Lemma~\ref{lem:forward_difference_error}]
The fundamental theorem of calculus gives
\begin{equation}\label{eq:forward_difference_integral}
    \widehat\nabla_u \varphi(x;\beta)
    =
    \frac{\varphi(x+\beta u)-\varphi(x)}{\beta}u
    =
    \left(\int_{0}^{1}\langle\nabla\varphi(x+t\beta u),u\rangle\,dt\right)u
    =\int_{0}^{1}uu^\top\nabla \varphi(x+t\beta u)\,dt.
\end{equation}
Subtracting $uu^\top\nabla\varphi(x)$ from \eqref{eq:forward_difference_integral} and using $L$-smoothness gives
\[
    \|\widehat\nabla_u \varphi(x;\beta)-uu^\top\nabla \varphi(x)\|
    \le\int_0^1\|uu^\top(\nabla\varphi(x+t\beta u)-\nabla\varphi(x))\|\,dt
    \le\int_{0}^{1}Lt\beta\,dt
    =
    \frac{L\beta}{2}.
\]
For any unit vector $v$, \eqref{eq:forward_difference_integral} and $L$-smoothness imply
\begin{align*}
    &\phantom{\leq}\left|\left\langle
      \widehat\nabla_u\varphi(x;\beta)-uu^\top\nabla\varphi(x),v
      \right\rangle\right| \\
    &\le |\langle u,v\rangle|
      \int_0^1\|\nabla\varphi(x+t\beta u)-\nabla\varphi(x)\|\,dt
    \le \frac{L\beta}{2}|\langle u,v\rangle|.
\end{align*}

If $u$ is uniform on $\mathbb S_d$, then $\mathbb E_u\langle u,v\rangle^2=1/d$. Thus, Euclidean norm duality and Cauchy--Schwarz give
\begin{align*}
    &\phantom{\leq}\left\|d\,\mathbb E_u\!\left[
      \widehat\nabla_u\varphi(x;\beta)-uu^\top\nabla\varphi(x)
      \right]\right\|^2 \\
    &=d^2\sup_{\|v\|=1}\left|\mathbb E_u\!\left\langle
      \widehat\nabla_u\varphi(x;\beta)-uu^\top\nabla\varphi(x),v
      \right\rangle\right|^2\\
    &\le \frac{d^2L^2\beta^2}{4}
      \sup_{\|v\|=1}\left(\mathbb E_u|\langle u,v\rangle|\right)^2
    \le \frac{d^2L^2\beta^2}{4}
      \sup_{\|v\|=1}\mathbb E_u\langle u,v\rangle^2
    =\frac{dL^2\beta^2}{4}.
\end{align*}
\end{proof}

\begin{proof}[Proof of Lemma~\ref{lem:uniform_perturbation}]
Write $U=[Z^\top,Y^\top,\tilde J^\top]^\top$ and $X=\operatorname{prox}_{\gamma\Psi}(AZ)=[x_1,\ldots,x_n]^\top$. For $i\in[n]$, $j\in[m_i]$, and $\|u\|=1$, define the observation error at agent $i$ by
\[
    \varepsilon_{i,j,u}
    \coloneqq\widehat\nabla_u f_{i,j}(x_i;\beta)
      -uu^\top\nabla f_{i,j}(x_i).
\]
Lemma~\ref{lem:forward_difference_error} applies at every $x_i$, so
\begin{equation}\label{eq:uniform_observation_error}
    \|\varepsilon_{i,j,u}\|\le\frac{L\beta}{2},
    \qquad
    \|d\,\mathbb E_u\varepsilon_{i,j,u}\|^2\le\frac{dL^2\beta^2}{4}.
\end{equation}
The two operators use the same input and samples. Subtracting their gradient and table updates therefore cancels all stored-table terms and leaves the error blocks
\begin{align*}
    E_\xi^G&\coloneqq
      \widehat G_{\xi}^{\beta}(X,\tilde J)-\widehat G_\xi^0(X,\tilde J),
    & [E_\xi^G]_{i,:}^\top
      &=\frac d{R_i}\sum_{(j,u)\in\mathcal R_i}\varepsilon_{i,j,u},\\
    E_\xi^J&\coloneqq
      \mathcal J_{\xi}^{\beta}(X,\tilde J)-\mathcal J_\xi^0(X,\tilde J),
    & [E_{\xi,i}^J]_{j,:}^\top
      &=\sum_{u:(j,u)\in\mathcal S_i}\varepsilon_{i,j,u}.
\end{align*}
Here $E_{\xi,i}^J$ is the block of $E_\xi^J$ corresponding to agent $i$. For a uniform component $j$ and an independent uniform direction $u$,
set $\bar\varepsilon_i=\mathbb E_{j,u}[d\varepsilon_{i,j,u}]$. Jensen's inequality over $j$ and \eqref{eq:uniform_observation_error} give
\[
    \|\bar\varepsilon_i\|^2
    =\left\|\frac1{m_i}\sum_{j=1}^{m_i}d\,\mathbb E_u\varepsilon_{i,j,u}\right\|^2
    \le\frac1{m_i}\sum_{j=1}^{m_i}
      \|d\,\mathbb E_u\varepsilon_{i,j,u}\|^2
    \le\frac{dL^2\beta^2}{4}.
\]
Applying \eqref{eq:iid_batch_second_moment} with $V_r=d\varepsilon_{i,j_{i,r},u_{i,r}}$ and $a=\bar\varepsilon_i$ gives
\begin{align*}
    \mathbb E\|[E_\xi^G]_{i,:}^\top\|^2
    &=\frac1{R_i}\mathbb E_{j,u}\|d\varepsilon_{i,j,u}\|^2
      +\left(1-\frac1{R_i}\right)\|\bar\varepsilon_i\|^2\\
    &\le\frac1{R_i}\mathbb E_{j,u}\|d\varepsilon_{i,j,u}\|^2
      +\|\bar\varepsilon_i\|^2
    \le\frac{L^2\beta^2}{4}\left(\frac{d^2}{R_i}+d\right).
\end{align*}
The first inequality drops $-\|\bar\varepsilon_i\|^2/R_i$; the last uses the preceding mean bound and \eqref{eq:uniform_observation_error}.
For distinct directions $u,v$ in the same row of $\mathcal S_i$, $\varepsilon_{i,j,u}$ is parallel to $u$, and rowwise orthonormality gives
\[
    \langle\varepsilon_{i,j,u},\varepsilon_{i,j,v}\rangle
    =(u^\top\varepsilon_{i,j,u})(v^\top\varepsilon_{i,j,v})u^\top v=0.
\]
Expanding each row square and using this orthogonality gives
\[
    \|E_\xi^J\|_F^2
    =\sum_{i=1}^n\sum_{j=1}^{m_i}
      \sum_{u:(j,u)\in\mathcal S_i}\|\varepsilon_{i,j,u}\|^2
    \le\frac{L^2\beta^2}{4}\sum_{i=1}^n|\mathcal S_i|.
\]
Taking expectations and using \eqref{eq:sampling_constants} gives
\[
    \mathbb E\|E_\xi^J\|_F^2
    \le\frac{L^2\beta^2}{4}\mathbb E
      \sum_{i=1}^n|\mathcal S_i|
    =\frac{L^2\beta^2\nu}{4}.
\]
Finally, the communication operator maps the update difference to
\[
    \mathbf T_{\xi}^{\beta}U-\mathbf T_\xi^0U
    =\begin{bmatrix}-\gamma E_\xi^G\\-\gamma CE_\xi^G\\E_\xi^J\end{bmatrix}.
\]
Using $A+C^2=I_n$ and the two error estimates above gives
\begin{align*}
    \mathbb E\|\mathbf T_{\xi}^{\beta}U-\mathbf T_\xi^0U\|_{\mathcal M}^2
    &=\gamma^2\mathbb E\|E_\xi^G\|_A^2
      +\gamma^2\mathbb E\|CE_\xi^G\|_F^2
      +c\mathbb E\|E_\xi^J\|_F^2\\
    &=\gamma^2\mathbb E\|E_\xi^G\|_F^2
      +c\mathbb E\|E_\xi^J\|_F^2\\
    &\le\frac{L^2\beta^2}{4}
      \left[\gamma^2\sum_{i=1}^n\left(\frac{d^2}{R_i}+d\right)+c\nu\right]
      =\mathcal E_\beta.
\end{align*}
\end{proof}

For the prescribed $c$, the sampling constants give
\begin{equation}\label{eq:sampling_sum_bound}
    \sum_{i=1}^n\frac{d^2}{R_i}
    =\sum_{i=1}^n\frac{m_i d}{2}\frac{2d}{m_iR_i}
    \le\frac{Md\rho_J}{2},
    \qquad c\nu=4\gamma^2Md\rho_J.
\end{equation}
The inequality uses the definition of $\rho_J$, and the last equality uses $\nu=\sigma Md$. Substitution into \eqref{eq:uniform_perturbation_bound} yields
\begin{equation}\label{eq:simplified_perturbation_bound}
    \mathcal E_\beta
    \le\frac{\gamma^2dL^2\beta^2}{4}
      \left(n+\frac{9M\rho_J}{2}\right)
    \le\frac54\gamma^2dL^2(n+M\rho_J)\beta^2.
\end{equation}
The last inequality uses $1\le5$ and $9/2\le5$.

\subsection{Chebyshev Communication Properties}
\label{appendix:chebyshev_properties}

\begin{proof}[Proof of Lemma~\ref{lem:chebyshev_spectral_gap}]
Suppose first that $0<\rho<1$. The recurrence defining $T_r$ and the cosine and hyperbolic-cosine addition formulas give, by induction on $r$,
\[
    T_r(\cos t)=\cos(rt),\qquad
    T_r(\cosh t)=\cosh(rt),\qquad t\in\mathbb R,\quad r\ge0.
\]
Set $\alpha\coloneqq\operatorname{arcosh}(1/\rho)>0$. For $K\ge1$, these identities imply
\begin{equation}\label{eq:chebyshev_polynomial_bound}
    T_K(1/\rho)=\cosh(K\alpha)>1,\qquad
    P_K(1)=1,\qquad
    \max_{|t|\le\rho}|P_K(t)|
    =\frac{1}{\cosh(K\alpha)}<1.
\end{equation}
The maximum uses $|T_K(s)|\le1$ for $s\in[-1,1]$, with equality at $s=1$. Let $Q\coloneqq\mathbf1\mathbf1^\top/n$.
Since $W$ is symmetric, $W\mathbf1=\mathbf1$, and $\|W-Q\|_2\le\rho$, its restriction to $\mathbf1^\perp$ has all eigenvalues in $[-\rho,\rho]$.
Polynomial functional calculus and \eqref{eq:chebyshev_polynomial_bound} give
\[
    P_K(W)\mathbf1=\mathbf1,\qquad
    \left\|P_K(W)-Q\right\|_2
    \le\frac{1}{\cosh(K\alpha)}<1.
\]
Thus $A_K$ has eigenvalue $1$ on the consensus subspace and eigenvalues strictly between $0$ and $1$ on its orthogonal complement.
The matrix $B_K=I_n-A_K$ has eigenvalue $0$ exactly on the consensus subspace, and
\[
    \lambda_2(B_K)
    \ge\frac{1-\cosh(K\alpha)^{-1}}{2}.
\]
This proves \eqref{eq:chebyshev_operator_properties} and the first inequality in \eqref{eq:chebyshev_gap_bound}. Moreover, $\cosh\alpha=1/\rho$ and $\tanh\alpha\le\alpha$ yield
\[
    \alpha^2\ge\tanh^2\alpha=1-\rho^2,\qquad
    \cosh(K\alpha)\ge1+\frac{K^2\alpha^2}{2}
    \ge1+\frac{K^2(1-\rho^2)}{2}.
\]
Substituting this lower bound into the preceding gap estimate gives
\[
    \lambda_2(B_K)
    \ge\frac12\left(1-
       \frac{1}{1+K^2(1-\rho^2)/2}\right)
    =\frac{K^2(1-\rho^2)}{4+2K^2(1-\rho^2)}.
\]
Since $0<\tau\le\sigma/4\le1/4$, the choice \eqref{eq:chebyshev_round_choice} gives $K^2(1-\rho^2)\ge8\tau$. The function $s\mapsto s/(4+2s)$ is increasing for $s\ge0$, so
\[
    \lambda_2(B_K)
    \ge\frac{8\tau}{4+16\tau}
    =\frac{2\tau}{1+4\tau}\ge\tau.
\]
The last inequality uses $\tau\le1/4$.

To verify the implementation, stack the vectors in Algorithm~\ref{alg:chebyshev_consensus} as $\chi^{k,r}\coloneqq[\chi_1^{k,r},\ldots,\chi_n^{k,r}]^\top\in\mathbb R^{n\times d}$.
Its scalar recurrence and initialization give $a_r=T_r(1/\rho)>0$. For $r=0,1$,
\[
    \chi^{k,0}=\chi^k=\frac{T_0(W/\rho)}{a_0}\chi^k,
    \qquad
    \chi^{k,1}=W\chi^k=\frac{T_1(W/\rho)}{a_1}\chi^k.
\]
If this representation holds at $r$ and $r-1$, its update gives
\begin{align*}
    \chi^{k,r+1}
    &=\frac{2a_r}{\rho a_{r+1}}W\chi^{k,r}
      -\frac{a_{r-1}}{a_{r+1}}\chi^{k,r-1}\\
    &=\frac{2(W/\rho)T_r(W/\rho)-T_{r-1}(W/\rho)}{a_{r+1}}
      \chi^k
    =\frac{T_{r+1}(W/\rho)}{a_{r+1}}\chi^k.
\end{align*}
The final equality is the Chebyshev recurrence. Induction therefore gives
\[
    \zeta^{k+1}=\frac{\chi^k+\chi^{k,K}}2
    =\frac{I_n+P_K(W)}2\chi^k=A_K\chi^k.
\]
There is one multiplication by $W$ at $r=1$ and one at each of the remaining $K-1$ rounds, each implemented by a neighbor exchange.
If $\rho=0$, then $W=Q$ and one ordinary round gives $A_1=(I_n+Q)/2$ and $B_1=(I_n-Q)/2$. All the stated matrix properties hold, and $\lambda_2(B_1)=1/2\ge\tau$.
\end{proof}

\subsection{Oracle and Communication Complexity}
\label{appendix:complexity_proofs}

\begin{proof}[Proof of Corollaries~\ref{cor:unaccelerated_complexity}
and~\ref{cor:accelerated_complexity}]
Let $\Pi_i^t$ be the restriction of $\Pi^t$ to agent $i$'s $md$-dimensional table space. Both corollaries assume $m_i=m$ and $|\mathcal S_i^t|=R_i=R$ for every agent, giving
\begin{align*}
    R
    &=\mathbb E[|\mathcal S_i^t|\mid\mathcal F_t]
      =\operatorname{tr}\!\left(\mathbb E[\Pi_i^t\mid\mathcal F_t]\right)
      =\sigma md.
\end{align*}
The first equality uses the fixed refresh size; the second uses $\operatorname{tr}(uu^\top)=1$ for each sampled unit direction in \eqref{eq:sampling_projection_definition};
the last uses \eqref{eq:expected_sampling_projection}. Substituting $m_i=m$ and $R_i=R$ into \eqref{eq:sampling_constants} gives
\begin{gather*}
    \sigma=\frac{R}{md},\qquad
    \rho_0=\frac{1+2d/R}{m},\qquad
    \rho_J=\frac{2d}{mR},\\
    mL(\rho_0+4\rho_J)
      =\frac{L(R+10d)}{R}=\frac1\gamma,\qquad
    n+M\rho_J=n\left(1+\frac{2d}{R}\right).
\end{gather*}
Thus the stepsizes in \eqref{eq:unaccelerated_batch_parameters} and~\eqref{eq:accelerated_batch_parameters} are admissible,
and the theorem's $\tau=\min\{\gamma\mu/2,\sigma/4\}$ equals the value in those equations. In particular,
\begin{align*}
    \frac1\tau
    &=\max\left\{\frac{2\kappa(R+10d)}{R},
      \frac{4md}{R}\right\}\\
    &=\Theta\!\left(
      \kappa\left(1+\frac dR\right)
        +\frac{md}{R}\right).
\end{align*}
The order estimate uses $R+d\le R+10d\le10(R+d)$ and $(u+v)/2\le\max\{u,v\}\le u+v$ for $u,v\ge0$.

For the ordinary algorithm, symmetry of $W$ and the definition of $\rho$ give
\[
    \lambda_2(B)=\frac{1-\lambda_{n-1}(W)}2
      \ge\frac{1-\rho}2
      =\frac{1-\rho^2}{2(1+\rho)}
      \ge\frac{1-\rho^2}{4}>0.
\]
The first inequality uses $\lambda_{n-1}(W)\le\rho$ on the subspace orthogonal to $\mathbf1$; the last uses $0\le\rho<1$.
Set $a=(1-\rho^2)/4$ and $q=\min\{\tau,(1-\rho^2)/4\}$. Then
\[
    1-\delta=\min\{\tau,\lambda_2(B)\}\ge q,
    \qquad
    \mathcal D(\lambda_2(B))\le\mathcal D(a).
\]
The second inequality follows from \eqref{eq:initial_error_prefactor}, whose only $a$-dependent term is nonnegative and proportional to $a^{-1}$.
For Chebyshev acceleration, set $(a,q)=(\tau,\tau)$. In both cases, $0<q\le\tau\le1/4$, and the corresponding theorem gives
\[
    \mathbb E\|X^k-X^*\|_F^2
    \le\left(1-\frac q2\right)^{k-1}\mathcal D(a)
      +\frac{5\gamma^2ndL^2(1+2d/R)\beta^2}{q^2}.
\]
For the ordinary algorithm, this bound also uses $1-\delta\ge q$ to bound the contraction factor and error floor, and the preceding bound on the initial-error prefactor.
The choices of $\beta$ and $k$ in the respective corollary satisfy
\begin{align*}
    \frac{5\gamma^2ndL^2(1+2d/R)\beta^2}{q^2}
    &\le\frac{\epsilon}{2},\\
    \frac{q(k-1)}2
    &\ge\log\max\left\{1,\frac{2\mathcal D(a)}{\epsilon}\right\},\\
    e^{-q(k-1)/2}\mathcal D(a)
    &\le\frac{\mathcal D(a)}{\max\{1,2\mathcal D(a)/\epsilon\}}
      \le\frac{\epsilon}{2}.
\end{align*}
Using $1-q/2\le e^{-q/2}$ in the rate therefore gives
\[
    \mathbb E\|X^k-X^*\|_F^2
    \le e^{-q(k-1)/2}\mathcal D(a)
      +\frac{5\gamma^2ndL^2(1+2d/R)\beta^2}{q^2}
    \le\frac{\epsilon}{2}+\frac{\epsilon}{2}
    =\epsilon.
\]

Each iteration uses $R_i+|\mathcal S_i^t|=2R$ oracle units per agent. Computing $X^k$ requires the updates indexed by $t=0,\ldots,k-1$, so the total oracle cost per agent is
\[
    \sum_{t=0}^{k-1}(R_i+|\mathcal S_i^t|)=2R k.
\]
Each oracle unit requires at most two function values by \eqref{eq:fm_2p}. The prescribed iteration count satisfies
\[
    k\le2+\frac2q\log\max\left\{1,\frac{2\mathcal D(a)}{\epsilon}\right\}
    =\mathcal O\!\left(\frac1q
      \log\left(2+\frac{\mathcal D(a)}{\epsilon}\right)\right).
\]
The inequality uses $\lceil z\rceil\le z+1$; the asymptotic bound uses $q\le1/4$ and $\log(2+\mathcal D(a)/\epsilon)\ge\log2$. For the ordinary algorithm, there is one communication round per update.
Substituting $a=(1-\rho^2)/4$, $q=\min\{\tau,(1-\rho^2)/4\}$, and the estimate on $\tau^{-1}$ therefore gives
\begin{align*}
    2R k
    &=\mathcal O\!\left(
      \left[\kappa(R+d)+md
        +\frac{R}{1-\rho^2}\right]
      \log\left(2+\frac{\mathcal D((1-\rho^2)/4)}{\epsilon}\right)\right),\\
    k&=\mathcal O\!\left(
      \left[\kappa\left(1+\frac dR\right)
        +\frac{md}{R}+\frac1{1-\rho^2}\right]
      \log\left(2+\frac{\mathcal D((1-\rho^2)/4)}{\epsilon}\right)\right).
\end{align*}
Here the same maximum--sum comparison bounds $q^{-1}=\max\{\tau^{-1},4/(1-\rho^2)\}$.
When $W\succeq0$, $\rho=\lambda_{n-1}(W)$, so
\[
    \lambda_2(B)=\frac{1-\rho}2
      =\frac{1-\rho^2}{2(1+\rho)}
      =\Theta(1-\rho^2).
\]
Thus the substitution preserves the spectral-gap order for positive-semidefinite mixing matrices.
For any admissible $W$, lazy mixing replaces it by $(I_n+W)/2\succ0$ and replaces $B$ by $B/2$, so this order comparison is available without extra communication and with at most a constant-factor change in the unaccelerated complexity.

For Chebyshev acceleration, each update uses $K$ communication rounds, and \eqref{eq:chebyshev_round_choice} implies
\[
    \frac K\tau\le\frac2\tau+
      \sqrt{\frac{8}{\tau(1-\rho^2)}}.
\]
This uses $\lceil\max\{1,z\}\rceil\le2+z$ for $z\ge0$ and also holds when $\rho=0$, $K=1$. Substituting $(a,q)=(\tau,\tau)$ and the estimate on $\tau^{-1}$ gives
\[
    2Rk=\mathcal O\!\left(
      \left[\kappa(R+d)+md\right]
      \log\left(2+\frac{\mathcal D(\tau)}{\epsilon}\right)\right)
\]
and
\[
    kK=\mathcal O\!\left(\left[
      \kappa\left(1+\frac dR\right)+\frac{md}{R}
      +\sqrt{\frac{\kappa(1+d/R)+md/R}{1-\rho^2}}\right]
      \log\left(2+\frac{\mathcal D(\tau)}{\epsilon}\right)\right).
\]
\end{proof}

\end{document}